\documentclass[10pt,a4paper]{article}
\usepackage[T1]{fontenc}
\usepackage[left=2cm, right=2cm, top=3cm, bottom=3cm]{geometry}
\usepackage{todonotes}
\usepackage{xcolor}
\definecolor{internationalkleinblue}{rgb}{0.0, 0.18, 0.65}
\providecommand{\keywords}[1]{\textbf{Keywords: } #1}
\usepackage{amssymb,amsthm,amsmath,dsfont}
\numberwithin{equation}{section}
\usepackage{mathrsfs}
\usepackage{natbib,hyperref}
\usepackage{cleveref}
\usepackage{authblk}
\usepackage{enumerate}
\newcommand{\scr}[1]{\mathscr{#1}}
\newcommand{\bb}[1]{\mathbb{#1}}
\newcommand{\al}[1]{\mathcal{#1}}
\newcommand{\rrm}[1]{\mathrm{#1}}
\newcommand{\I}{\mathds{1}}
\newtheorem{theorem}{Theorem}[section]
\newtheorem{definition}[theorem]{Definition}
\newtheorem{proposition}[theorem]{Proposition}
\newtheorem{lemma}[theorem]{Lemma}
\newtheorem{corollary}[theorem]{Corollary}
\newtheorem{remark}[theorem]{Remark}
\newtheorem{assumption}[theorem]{Assumption}
\newtheorem{example}[theorem]{Example}

\title{Long-time behaviour of DLRA for SDEs}
\author[a,1]{Jianhai Bao}
\author[b,2]{Haitao Wang}
\author[b,3]{Yue Wu}
\affil[a]{Center for Applied Mathematics, Tianjin University, 300072 Tianjin, P.R. China}
\affil[b]{Department of Mathematics and Statistics, University of Strathclyde, Glasgow, G1 1XH, UK} \affil[1]{\href{mailto:jianhaibao@tju.edu.cn}{jianhaibao@tju.edu.cn}}
\affil[2]{\href{mailto:haitao.wang@strath.ac.uk}{haitao.wang@strath.ac.uk}}
\affil[3]{\href{mailto:yue.wu@strath.ac.uk}{yue.wu@strath.ac.uk}}
\date{}

\begin{document}
	\maketitle

\begin{abstract}
    We study dynamical orthogonal (DO) approximations of stochastic differential equations and investigate their long-time behaviour. The DO formulation represents the solution by a low-rank decomposition and leads to a coupled system consisting of an evolution equation on the Stiefel manifold and a reduced stochastic process. We establish the well-posedness of the strong DO system and derive quantitative error estimates between the original stochastic differential equation and its low-rank approximation in the Wasserstein distance.
    
Our main contribution is the analysis of invariant probability measures for the DO dynamics. Under suitable dissipativity, Lipschitz continuity, and non-degeneracy assumptions on the coefficients, we prove the existence of an invariant probability measure for the strong DO system. The proof combines uniform moment estimates, a Krylov--Bogoliubov argument for an associated frozen system, and a Kakutani-Fan-Glicksberg fixed-point theorem to recover the self-consistent dynamics. We further show that the induced low-rank process admits an invariant probability measure and discuss the structure of invariant measures through several illustrative examples. These results provide a rigorous foundation for the use of dynamical low-rank approximations in the approximation of long-time statistical properties of stochastic dynamical systems.\\
    \keywords{Orthogonal approximation, Dynamical low-rank approximation, DO approximation, Invariant measures} 
\medskip\\
\textbf{MSC 2020:} 60H10, 58J65, 65C30, 37A25, 60J60
\end{abstract}
	\section{Introduction}\label{sec.intr}
High-dimensional stochastic differential equations (SDEs) play a central role both as models, for example in molecular dynamics \cite{niknam2024nuclear}, Bayesian inverse problems \cite{blomker2022continuous}, uncertainty quantification \cite{guilleminot2014ito}, and stochastic filtering \cite{llopis2018particle}, and as the continuous-time foundations of sampling algorithms such as Langevin-type Markov chain Monte Carlo \cite{basak2016langevin}. 
In many of these settings, the objective is not merely to approximate trajectories accurately over a finite time horizon, but to characterise long-time statistical properties of the system, such as invariant measures, ergodic averages, and equilibrium fluctuations. 
In high state-space dimension $d$, direct numerical simulation of SDEs can become prohibitively expensive, especially when accurate approximation of long-time behavior requires integration over long time horizons. This motivates the development of structure-preserving model reduction techniques that reduce computational cost while retaining key dynamical, statistical, or geometric properties of the original high-dimensional SDE.

Model reduction techniques aim to approximate the solution of a high-dimensional SDE by a lower-dimensional surrogate, thereby reducing the cost of simulation while maintaining accuracy for the quantities of interest. Among these approaches, Dynamical Low-Rank Approximation (DLRA) has emerged as a flexible and computationally efficient framework. Its key advantage is that the reduced subspace is not fixed in advance; rather, it evolves dynamically with the solution. As a result, DLRA can adapt to time-dependent coherent structures, transient dynamics, and changing directions of stochastic variability, while retaining a compact representation of the evolving solution.

The dynamical low-rank approximation was originally developed for deterministic matrix differential equations by Koch and Lubich \cite{koch2007dynamical}. Independently, Sapsis and Lermusiaux derived the Dynamically Orthogonal (DO) field equations for stochastic systems \cite{sapsis2009dynamically}. The connection between the two frameworks was later pointed out in \cite{musharbash2015error}. This framework has been applied to random partial differential equations \cite{feppon2018dynamically,kazashi2021existence,musharbash2018dual} and rigorously analyzed in the stochastic setting by Kazashi, Nobile, and Zoccolan \cite{DLRASDE}.

To describe the main idea, consider a $d$-dimensional stochastic differential equation (SDE) on a filtered complete probability space $\big(\Omega,\scr F,(\scr F_t)_{t\ge0},\bb P\big)$
	\begin{equation}\label{eq.sec4}
		\begin{aligned}
		&\rrm dX_t=b(t,X_t)\,\rrm dt+\sigma(t,X_t)\,\rrm dB_t,\\&
		X_0=\xi \in L^2(\Omega,\bb R^d),
		\end{aligned}
	\end{equation}
	where  $(B_t)_{t\ge0}$ is an $m$-dimensional Brownian motion, the functions $b:[0,\infty)\times\bb R^d\to \bb R^d$ and $\sigma:[0,\infty)\times\bb R^d\to\bb R^{d\times m}$ satisfy suitable assumptions specified below in Section \ref{sec:properties}. 
    The DLRA method approximates the solution $X_t \in \mathbb R^d$
of Eqn. \eqref{eq.sec4} by a rank-$r$ ($r\le d$) representation of the form, termed as the \emph{DLR approximation},
\begin{equation}\label{eqn:intro_xr}
    X_t^{(r)} = U_t^\top Y_t=\sum_{i=1}^{r}u_{t,i}y_{t,i}\in \bb R^d,
\end{equation}
	where,
the pair $(U_t=(u_{t,i})_{i=1}^{r},Y_t=((y_{t,i})_{i=1}^r):[0,T]\to \bb R^{r\times d}\times L^2(\Omega,\bb R^r)$ 
is called a \emph{strong DO solution} for Eqn. \eqref{eq.sec4}, if it satisfies 
		\begin{itemize}
			\item[(1)] the initial data $(U_0,Y_0)$ are chosen so that $U_0\in\bb R^{r\times d}$ has orthonormal rows and  the components of $Y_0$ are linearly independent in $L^2(\Omega)$.
			\item[(2)] the function $U:[0,T]\to\bb R^{r\times d}$ is absolutely continuous and satisfies
		\begin{equation}\label{eq:U}
			U_t U_t^\top = I_{r} \in \mathbb{R}^{r \times r} \text{ and } \qquad
			U_t \frac{\partial U_t^\top}{\partial t} = O_{r} \in \mathbb{R}^{r \times r} \quad \text{for almost every } t \in [0,T],
		\end{equation}
		where $I_r$ and $O_r$ denote the identity and zero matrices in $\bb R^{r\times r}$, respectively.
			\item[(3)] the function $Y(\omega):[0,T]\to\bb R^r$ is almost surely continuous paths and it is $\scr F_t$-measurable for all $t\in[0,T]$. Besides, for any $t\in[0,T]$, $y_{t,1},y_{t,2},\cdots,y_{t,r}$ are linearly independent in $L^2(\Omega)$.
			\item[(4)] for almost every $t\in[0,T]$, the pair $(U,Y):[0,T]\to \bb R^{r\times d}\times L^2(\Omega,\bb R^r)$  satisfies the equation 
				\begin{equation}\label{eq:DO}
				\begin{aligned}
					C_{Y_t}\rrm dU_t&=\bb E[Y_tb(t,U_t^\top Y_t)^\top](I_{d}-P_{U_t})\,\rrm dt;\\
					\rrm d Y_t&=U_tb(t,U_t^\top Y_t)\,\rrm dt+U_t\sigma(t,U_t^\top Y_t)\,\rrm dB_t,
				\end{aligned}
			\end{equation}
			where $C_{Y_t}$ is the covariance matrix of $Y_t$, i.e. $C_{Y_t}=\bb E[Y_tY_t^\top]$ and $P_{U_t}$ is the orthogonal projection matrix onto the row space  span$\{u_{t,1},u_{t,2},\cdots,u_{t,r}\}\subset \bb R^d$ that means $P_{U_t}=U_t^\top U_t$. 
		\end{itemize}
        
	Note that conditions in Eqn. \eqref{eq:U} imply that the rows of $U_t$  remain orthonormal over time \cite{sapsis2009dynamically} thus the operator $P_{U_t}$ is an orthogonal projection. As a consequence, the \emph{DLR} approximation admits the following SDE 
	\begin{equation}\label{eqn:DLRA}
			\rrm d X^{(r)}_{t}
			=\big((I_{d}-P_{U_t})\big(P_{Y_t}b(t,X^{(r)}_{t})\big)+P_{U_t}b(t,X^{(r)}_{t})\big)\,\rrm dt+P_{U_t}\sigma(t,X^{(r)}_{t})\,\rrm d B_t,
	\end{equation}
	where $P_{Y_t}$ is the projection from $L^2(\Omega,\bb R^d)$ to span$\{y_{t,1},y_{t,2},\cdots,y_{t,r}\}$, i.e. $P_{Y_t}u=\bb E[uY_t^\top]C_{Y_t}^{-1}Y_t$.

The DLRA method described above is consistent at full rank (see Proposition \ref{PropXandXDO}) in the sense that, when $r=d$, the DLRA manifold coincides with the full ambient space 
$\mathbb R^d$, and the reduced system formally recovers the original SDE 
(up to an orthogonal change of coordinates), independently of whether 
the diffusion is degenerate or the law has full-dimensional support.

Existing work has focused primarily on
well-posedness \cite{DLRASDE},
numerical aspects of the method \cite{kazashi2026numerical,ueckermann2013numerical} and applications in Kalman filtering \cite{nobile2026dynamicallowrankensemblekalman,nobile2025dynamicallowrankapproximationskalman}. 
In many applications, the primary object of interest is approximating the law of the process $X_t$ in the long-time regime. 
Typical questions concern the existence and characterization of invariant measures, convergence rates toward equilibrium, and the preservation of ergodic structure under approximation. From this perspective, a reduced-order model is meaningful only if it faithfully captures, at least qualitatively and ideally quantitatively, the invariant or stationary behaviour of the original dynamics.
This leads to the following fundamental question:
\begin{quote}
\emph{Does the DLR approximation preserve the long-time
distributional properties of the original SDE?}
\end{quote}

Addressing this question is substantially more delicate than in the classical setting of finite-dimensional Markov diffusions. Assume that the $d$-dimensional Markovian SDE \eqref{eq.sec4} admits an invariant probability measure $\mu$, for instance under suitable dissipativity of the drift and nondegeneracy of the diffusion. For the original SDE, the long-time distributional behavior can often be studied through the Markov semigroup associated with $X_t$, using tools such as Lyapunov estimates, irreducibility, strong Feller properties, coupling arguments, or hypoelliptic criteria.

In the DLRA framework, however, the reduced process has a more intricate structure. The approximation $X_t^{(r)}=U_t^\top Y_t$ does not, by itself, define an autonomous Markov diffusion on $\mathbb R^d$. Instead, the evolution is described by the coupled system \eqref{eq:DO} for the time-dependent basis $U_t$ and the stochastic coordinates $Y_t$. Moreover, the coefficients in \eqref{eq:DO} contain expectations such as $\bb E[Y_tb(t,U_t^\top Y_t)^\top]$ and the covariance matrix $C_{Y_t}$, and hence depend implicitly on the law of the reduced variables. In this sense, the DO/DLRA system has a McKean--Vlasov-type character: the evolution of each component depends not only on its current state, but also on distributional quantities generated by the approximation itself. Consequently, classical invariant-measure arguments for Markov semigroups on a fixed finite-dimensional state space are not directly applicable.

A further difficulty comes from the geometry of the approximation. The factor $U_t$ evolves on the Stiefel manifold of orthonormal $r$-frames in $\mathbb R^d$, while $Y_t$ satisfies an $r$-dimensional SDE whose coefficients depend on $U_t$. 
Since $X_t^{(r)} = U_t^\top Y_t$ with a time-varying basis $U_t$, this moving-subspace structure complicates the identification of stationary regimes: even if the law of $Y_t$ becomes stationary in reduced coordinates, the induced law of $X_t^{(r)}$ in $\mathbb R^d$ may still depend on the limiting behavior of the basis $U_t$. It also obstructs the direct use of standard hypoelliptic irreducibility criteria based on bracket-generating conditions, since the relevant dynamics do not form a standard autonomous diffusion on the full space $\mathbb R^d$ \cite{charous2023dynamically,DLRASDE,musharbash2015error}.

There is also an unavoidable approximation-theoretic obstruction. If the original SDE is nondegenerate, for example uniformly elliptic, then its invariant measure $\mu$ typically has full support in $\mathbb R^d$. By contrast, at each time the DLR approximation is supported on the rank-$r$ subspace determined by $U_t$. Therefore, unless the invariant distribution of the original SDE is itself intrinsically low-dimensional, one cannot expect the DLR approximation to recover $\mu$ exactly when $r<d$. The appropriate goal is instead to quantify the discrepancy between the invariant law of the original dynamics and the long-time distribution generated by the reduced dynamics in a weak metric, such as the Wasserstein distance.

Thus, preservation of long-time distributional properties under DLRA is neither automatic nor structurally guaranteed. It requires a probabilistic analysis of the nonlinear reduced system, uniform-in-time estimates that prevent the reduced dynamics from escaping to infinity, and a precise formulation of what it means for the invariant law of the reduced system to approximate that of the original SDE. The present work addresses these issues by studying the asymptotic behavior of the DO/DLRA dynamics and by quantifying, under suitable structural assumptions, the approximation of the invariant law of the original SDE.

Our main contributions are as follows.

\begin{itemize}
\item[(1)] \emph{Well-posedness under weakened assumptions.}
We establish well-posedness of the DO/DLRA system under the weakened condition stated in Assumption~\ref{as:bo1}. Compared with the assumptions imposed in \cite{DLRASDE}, this relaxes the conditions required for the existence of DLRA solutions to the SDE \eqref{eq.sec4}; see Theorem~\ref{wellposedness}.

\item[(2)] \emph{Uniform-in-time moment bounds.}
Under suitable dissipativity assumptions, we prove that both the exact solution and its DLR approximation admit second-moment bounds that are uniform in time. These estimates, developed in Section~\ref{sec:uniform}, provide the compactness and stability input needed for the subsequent long-time analysis.

\item[(3)] \emph{Wasserstein stability of the DLR approximation.}
In Theorem~\ref{maintheorem}, we derive quantitative bounds on the 2-Wasserstein distance between the law $\mu_t$ of the exact solution $X_t$ and the law $\nu_t$ of the DLR approximation $X_t^{(r)}$. Under global dissipativity, we show that
\begin{equation}
W_2^2(\mu_t,\nu_t)
\le
e^{\delta t} W_2^2(\mu_0,\nu_0)
+
C(d-r),
\qquad \delta<0,
\end{equation}
where the residual term $C(d-r)$ quantifies the error induced by the rank constraint. In particular, the DLRA approximation inherits the exponential contraction mechanism of the original SDE up to an irreducible rank-defect error.

\item[(4)] \emph{Sharpness of the rank-defect error.}
For a class of Ornstein--Uhlenbeck dynamics, we compute the Wasserstein distance explicitly and show that the residual term proportional to $d-r$ is optimal. This demonstrates that the error term in the general stability estimate is not merely an artifact of the proof; see Example~\ref{exam:sharp} in Section \ref{ssec:sharpness}.

\item[(5)] \emph{Existence of invariant measures for the reduced dynamics.}
Under suitable coercivity and ellipticity conditions, we prove that the DLRA system admits an invariant probability measure; see Theorem~\ref{inv}. The proof combines uniform moment bounds, tightness obtained through Krylov--Bogoliubov averaging, and a Kakutani fixed-point argument to handle the nonlinear, law-dependent structure of the reduced dynamics.
\end{itemize}

The paper is organised in the following way:
Section \ref{sec:pre} introduces notation and recalls preliminaries. Section \ref{sec:properties} establishes well-posedness of the DLRA system.
Section \ref{sec:uniform} proves uniform-in-time moment bounds
and Wasserstein stability estimates.
Section \ref{sec:invariant} establishes existence of invariant measures.
	
    \section{Preliminaries}\label{sec:pre}
    In this section, we introduce the notation used throughout the paper and recall several results that play an important role in our analysis. 

\paragraph{Notations.} Let $\bb N$ denote the set of all positive integers. For $d\in\bb N$, we write $|\cdot|$ for the Euclidean norm on Euclidean space $\bb R^d$ associated with the inner product $\langle \cdot,\cdot\rangle$. For $m,d\in\bb N$, the notation $\|\cdot\|_\rrm{HS}$ refers to the Hilbert-Schmidt norm for linear maps from $\bb R^m$ to $\bb R^d$. This norm is induced by the Hilbert–Schmidt inner product $\langle A,B\rangle_\rrm{HS}=\rrm{Tr}(A^\top B)$ and coincides with the $\ell^2$-norm of the matrix entries, i.e., $\|A\|_\rrm{HS}=\sqrt{\sum_{i=1}^d\sum_{j=1}^ma_{i,j}^2}$, where $A=(a_{i,j})$.
	 This norm coincides with the standard matrix norm on $\bb R^{m\times d}$ induced by the Euclidean norm. We denote by $O(r)$ the set of all orthogonal matrices in $\bb R^{r\times r}$. For any $a,b\in \bb R$, we define $$a\wedge b=\min \{a,b\},\text{ and } a\vee b=\max\{a,b\}.$$
    For any matrices  $A,\,B\in\bb R^{d\times d}$ with $d\in \bb N$, we write $A\succeq B$ (resp. $A\succ B$) to denote that $A-B$ is positive semidefinite (resp. positive definite). Moreover, we denote by $C_{\mathrm{b}}(\bb R^d)$ the set of all bounded continuous functions on $\bb R^d$.

    \paragraph{Function spaces and measure-theoretic framework.} 
    Let $V$ be a topological space with $\scr B(V)$ being its Borel $\sigma$-algebra. Denote by $\scr P(V)$  the set of all probability measures on $(V,\scr B(V))$. 

    For $V=\mathbb{R}^d$, a probability measure $\mu\in \scr P(\bb R^d)$  is called \emph{tight} if for any $\varepsilon>0$, there is a compact subset $K_\varepsilon$ of $\bb R^d$ such that $\mu(\bb R^d\setminus K_\varepsilon)\le \varepsilon$. For $p\ge 1$, we write $\scr P_p(\bb R^d)\subset \scr P(\bb R^d)$ for the subset of probability measures with a finite $p$-th moment, that is,
    \begin{equation}
        \scr P_{p}(\bb R^d)=\Big\{\mu\in\scr P(\bb R^d):\mu(|\cdot|^p):=\int_{\bb R^d}|x|^p\,\mu(\rrm dx)\Big\}.
    \end{equation}
    The space $\scr P_{p}(\bb R^d)$ can be equipped with the $p$-Wasserstein distance defined by
	$$\bb W_p(\mu,\nu):=\inf_{\pi\in\Pi(\mu,\nu)}\Big(\int_{\bb R^d\times\bb R^d}|x-y|^p\,\pi(\rrm dx,\rrm dy)\Big)^\frac{1}{p},$$ 
	where $\Pi(\mu,\nu)$ denotes the set of all couplings of $\mu$ and $\nu$, that is, all joint probability measures on $\bb R^d\times\bb R^d$ with marginals $\mu$ and $\nu$ respectively. More precisely, a measure $\pi\in\Pi(\mu,\nu)$ satisfies $\pi(\bb R^d,\rrm dy)=\nu(\rrm dx)$ and $\pi(\rrm dx,\bb R^d)=\mu(\rrm dx)$~\cite{coupling}. Let $\xi,\zeta$ be random variable on space $(\bb R^d,\scr B(\bb R^d))$ such that $\scr L_\xi=\mu$ and $\scr L_\zeta=\nu$, where $\scr L_\xi$ denotes the law of $\xi$. Then it holds that $\bb W^p_p(\mu,\nu)\le \bb E[|\xi-\zeta|^p]$. For any $\mu$-integrable function $f:\bb R^d\to\bb R$, we use the notation 
    \begin{equation}\label{eqn:int}
    	\mu(f):=\int_{\bb R^d}f(x)\,\mu(\rrm dx).
    \end{equation}
  
 \paragraph{Random variable spaces.}
Let $(\Omega,\scr F, (\scr F_t)_{t\ge0},\bb P)$ be a filtered complete probability space.
For $p\ge1$ and $d\in\bb N$, we denote by 
\[
L^p(\Omega,\bb R^d)
:= \Big\{ \xi:\Omega\to\bb R^d \text{ measurable }:\,
\bb E|\xi|^p<\infty \Big\},
\]
equipped with the norm
\[
\|\xi\|_{L^p(\Omega,\bb R^d)}
:= \big(\bb E[|\xi|^p]\big)^{1/p}.
\]
When $p=2$, this is a Hilbert space with inner product
\[
\langle \xi,\zeta\rangle_{L^2(\Omega,\bb R^d)}
:= \bb E\big[\xi^\top\zeta\big].
\]
For $\xi\in L^1(\Omega,\bb R^d)$, we denote its law by
\[
\scr L_\xi := \bb P\circ \xi^{-1} \in \scr P(\bb R^d).
\]
Moreover, $\xi\in L^p(\Omega,\bb R^d)$ if and only if
$\scr L_\xi \in \scr P_p(\bb R^d)$.
Also, for $\scr F_s\in(\scr F_t)_{t\ge0}$,  we define the expectation of the random variable $\xi$ under $\scr F_s$ by
\begin{equation}\label{eqn:conditional}
    \bb E_s[\xi] = \bb E[\xi|\scr F_s].
\end{equation}
    \section{Well-posedness and some properties of DLRA and DO solution}\label{sec:properties}
	This section is devoted to the existence of the DLRA over an infinite time horizon and to {the relation between the SDE~\eqref{eq.sec4} and its DLRA~\eqref{eqn:intro_xr}.}
    We consider stochastic differential equations (SDEs) of the form Eqn.~\eqref{eq.sec4} and impose the following assumptions on the drift and diffusion coefficients. {ompared with \cite{DLRASDE}, we relax the global Lipschitz condition on the drift, but we require a non‑degeneracy condition on the diffusion coefficients.}

	\begin{assumption}\label{as:bo1}
		The drift coefficient $b:[0,\infty)\times\bb R^d\to \bb R^d$ and the diffusion coefficient $\sigma:[0,\infty)\times \bb R^d\to\bb R^{d\times m}$ are measurable and $1/2$-H\"older continuous in time, uniformly in $x\in\bb R^d$.
		\begin{enumerate}[(H1)]
			\item For any open set $E \subset \bb R^d$, there exists a constant $L_{b,1}(E) \ge 0$ such that, for all $t \in [0,\infty)$ and $x, y \in E$, 
			\begin{align*}
				|b(t,x)-b(t,y)|&\le L_{b,1}(E)|x-y|.
			\end{align*}
			\item There exists a constant $L_{b,2} \ge 0$ such that, for all $t \in [0,\infty)$ and $x \in \bb R^d$,
			\begin{align*}
				|b(t,x)|&\le L_{b,2}(|x|+1).
			\end{align*}
			\item There exists a constant $L_{\sigma,1}\ge0$ such that, for all $t\in[0,\infty)$ and $x,y\in\bb R^d$,
			$$\left\|\sigma(t,x)-\sigma(t,y) \right\|_\rrm{HS}^2\le L_{\sigma,1}|x-y|^2.$$
			\item There exists $L_{\sigma,2}\ge0$ such that, for all $t\in[0,\infty)$ and $x\in\bb R^d$,
			\begin{align*}
				\|\sigma(t,x)\|^2_\rrm{HS}&\le L_{\sigma,2}(|x|^2+1).
			\end{align*}
            \item There exists a constant $L_{\sigma,3}>0$ such that, for all $t\in[0,\infty)$ and $x\in\bb R^d$,
            \begin{align*}
                \sigma(t,x)\sigma(t,x)^\top\succeq L_{\sigma,3}I_{d}.
            \end{align*}
		\end{enumerate}
	\end{assumption}	
The following theorem establishes the global existence and uniqueness of the strong DO solution, the maximal existence time being characterized by the loss of invertibility of the covariance matrix.
	\begin{proposition}
	    \label{wellposedness}
		Suppose Assumptions~(H1)--(H5) hold and recall $C_{Y_t}$ defined in Eqn.~\eqref{eq:DO}, and assume that there exists $\sigma_{Y_0}>0$ such that $\bb E[Y_0Y^\top_0]\succeq\sigma_{Y_0}I_{r}$. Then,  for any $T>0$, the SDE~\eqref{eq.sec4} has a unique strong DO solution on $(0,T]$.
	\end{proposition}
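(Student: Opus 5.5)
We plan to follow the strategy of \cite{DLRASDE}: first obtain a local solution, then extend it to $[0,T]$ by showing that the covariance $C_{Y_t}$ cannot degenerate in finite time. The only new ingredients are localisation for the locally Lipschitz drift (H1) and the use of the nondegeneracy (H5) to control $C_{Y_t}$ from below.

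\emph{Step 1 (local well-posedness).} Fix $\varepsilon>0$ and $R>0$. Truncate $b$ by $b_R(t,x)=b(t,\pi_R(x))$, where $\pi_R$ is the radial projection onto the ball of radius $R$; then $b_R$ is globally Lipschitz by (H1) and bounded. For the truncated problem we set up a fixed point on the space of pairs $(U,Y)$ with $U\in C([0,\tau],\bb R^{r\times d})$ and $Y$ adapted, continuous, with $\sup_t\bb E|Y_t|^2<\infty$. The map is given by \eqref{eq:DO}, with $C_{Y_t}^{-1}$ replaced by $(C_{Y_t}\vee \varepsilon I_r)^{-1}$, suitably regularised so that it is Lipschitz in $Y$ in $L^2$. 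The right-hand sides are Lipschitz in $(U,Y)$ in $\bb R^{r\times d}\times L^2$, by (H3), (H4) and the Lipschitz property of $b_R$; the products $\bb E[Yb_R^\top]$ are Lipschitz because $b_R$ is bounded and Lipschitz. A Picard/Banach argument on a short interval therefore gives a unique solution. We then check that $U_tU_t^\top=I_r$ is preserved: differentiating and using $(I_d-P_{U_t})U_t^\top=0$ gives $U_t\dot U_t^\top=0$, so orthonormality propagates. Uniqueness of the untruncated problem up to the exit time of $|U_t^\top Y_t|=|Y_t|$ from the $R$-ball, and up to the time $C_{Y_t}$ falls below $\varepsilon$, follows as usual. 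Here we use that $b$ enters both through pathwise quantities and through expectations, so we truncate pathwise and control the expectation terms by the moment bound of Step 2.

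\emph{Step 2 (a priori moments).} Apply Itô's formula to $|Y_t|^2$ and use (H2) and (H4) together with $\|U_t\|=1$. This gives $\bb E\sup_{t\le T}|Y_t|^2\le C(T)(1+\bb E|Y_0|^2)$ uniformly in $R$. Consequently the localising stopping times tend to infinity in probability, and the $R$-solutions patch together consistently; the expectation terms converge by dominated convergence using (H2). This yields a unique solution on $[0,\tau^*)$, where $\tau^*$ is the first time at which $\lambda_{\min}(C_{Y_t})\to0$.

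\emph{Step 3 (nondegeneracy of $C_{Y_t}$, main obstacle).} We must show $\lambda_{\min}(C_{Y_t})\ge c(T)>0$ on $[0,T]$. Fix a unit vector $v\in\bb R^r$. By Itô's formula,
\[
\frac{\rrm d}{\rrm dt}\bb E\langle v,Y_t\rangle^2=2\bb E[\langle v,Y_t\rangle\langle v,U_tb\rangle]+\bb E|\sigma^\top U_t^\top v|^2 .
\]
By (H5) the last term is at least $L_{\sigma,3}$, since $|U_t^\top v|=1$. The drift term is the delicate one, because (H2) only gives $|b|\le L_{b,2}(1+|Y_t|)$. Cauchy--Schwarz yields $2\sqrt{v^\top C_{Y_t}v}\,L_{b,2}(1+\bb E|Y_t|^2)^{1/2}$, which is at most $K(T)\sqrt{v^\top C_{Y_t}v}$ by Step 2. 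Writing $f(t)=\lambda_{\min}(C_{Y_t})$ and taking $v$ as a minimising eigenvector (a Danskin-type argument for the minimum of differentiable functions), we obtain in the sense of Dini derivatives
\[
f'\ge L_{\sigma,3}-K\sqrt f .
\]
Hence $f(t)\ge \min\{\sigma_{Y_0},(L_{\sigma,3}/K)^2\}>0$ for all $t\le T\wedge\tau^*$. This contradicts $\tau^*\le T$, so the solution exists on $(0,T]$. Combined with Step 1 with $\varepsilon$ chosen below this bound, this gives global existence and uniqueness on $(0,T]$. Linear independence of the components of $Y_t$ follows from $C_{Y_t}\succ0$, and continuity and adaptedness hold by construction.

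The main obstacle is making Steps 1 and 3 mesh. The lower bound in Step 3 must be derived for the truncated system uniformly in $R$, or else a bootstrap must be used: run the argument on $[0,\tau_\varepsilon\wedge\tau_R]$ and note that the constants do not depend on $\varepsilon$ or $R$. We would also need to check that the Dini-derivative comparison for $\lambda_{\min}$ is rigorous; if it is not, we would instead work with $\det C_{Y_t}$ or apply the bound to each fixed $v$ and take the infimum afterwards.
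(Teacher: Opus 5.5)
\textbf{Gap: the passage $R\to\infty$.} Two of your claims are not true for the DO system. In Step~1 you say the untruncated problem is uniquely solvable ``up to the exit time of $|Y_t|$ from the $R$-ball \dots\ as usual''. In Step~2 you say ``the $R$-solutions patch together consistently''.

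The reason is that the frame $U^R$ is deterministic. Its equation contains $C_{Y^R_t}$ and $\bb E[Y^R_t b_R(U_t^{R\top}Y^R_t)^\top]$, which are integrals over all of $\Omega$. That includes the event $\{|Y^R_t|>R\}$, on which $b_R\neq b_{R'}$, and this event has positive probability for $t>0$ because the noise in the $Y$-equation is nondegenerate by (H5). Consequently:
\begin{itemize}
\item In general $\dot U^R\neq\dot U^{R'}$ on every interval $[0,t]$, so $U^R\neq U^{R'}$.
\item Then $Y^R\neq Y^{R'}$ even before the exit times, because their coefficients are evaluated along different frames.
\item A DO solution stopped at $\tau_R$ does not solve the $R$-truncated system.
\item A stopped Gr\"onwall argument cannot control $\|U_t-V_t\|_{\rrm{HS}}$ for two candidate solutions. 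The moment bound of Step~2 makes the contribution of $\{|Y_t|>R\}$ small, but neither zero nor proportional to the distance between the two solutions.
\end{itemize}
Pathwise localisation works for coefficients depending on the current state, not for the law-dependent (McKean--Vlasov-type) coefficients in \eqref{eq:DO}.

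Part~I of the paper's proof asserts exactly the identity $U_{t\wedge\tau_n,n+1}=U_{t\wedge\tau_n,n}$, and Part~V uses the same stopped estimate for uniqueness. So your route is structurally the paper's and has the same defect.

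Your Step~3 is a good self-contained substitute for the citation of \cite[Proposition~4.5]{DLRASDE}, and it needs no Danskin argument. For each fixed unit $v$, $g_v(t)=v^\top C_{Y_t}v$ satisfies $g_v'\ge L_{\sigma,3}-K\sqrt{g_v}$. Hence $g_v(t)\ge c:=\min\{\sigma_{Y_0},(L_{\sigma,3}/K)^2\}$ on $[0,T]$, and you then take the infimum over $v$.

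\textbf{Repairing existence.} You can replace consistency by compactness.
\begin{itemize}
\item The bounds of Steps~2 and~3 are uniform in $R$, so $\|\dot U^R_t\|_{\rrm{HS}}\le c^{-1}\bigl(\bb E|Y^R_t|^2\bigr)^{1/2}\bigl(\bb E|b_R(U_t^{R\top}Y^R_t)|^2\bigr)^{1/2}\le C(T)$.
\item Thus $(U^R)_R$ is equicontinuous with values in the compact Stiefel manifold, and a subsequence converges uniformly on $[0,T]$ to some $U$.
\item For this fixed deterministic $U$, the $Y$-equation is a classical SDE with locally Lipschitz drift and Lipschitz diffusion, where localisation is legitimate. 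So $Y^{R_k}\to Y$ uniformly on $[0,T]$ in probability.
\item You then pass to the limit in the integrated $U$-equation using uniform integrability of $\{|Y^{R_k}_s|^2\}_k$, which follows from moment bounds conditional on $Y_0$. The bound $C_{Y_t}\succeq cI_r$ survives the limit.
\end{itemize}

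\textbf{Uniqueness is not repaired this way.} It requires an unstopped estimate of $\bb E[Y_tb(U_t^\top Y_t)^\top]-\bb E[Z_tb(V_t^\top Z_t)^\top]$ in terms of $\|U_t-V_t\|_{\rrm{HS}}$ and $\bb E|Y_t-Z_t|^2$. Local Lipschitz continuity of $b$ alone does not give this, so an additional hypothesis is needed, for example a global Lipschitz constant as in \cite{DLRASDE}.

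Read literally, (H1) allows $E=\bb R^d$ and so already provides such a constant. Under that reading you can drop the truncation and run Step~1 with $b$ itself; Steps~1--3, with no patching, then give a complete proof.
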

	\begin{proof}
		For each $n\in\bb N^+$, define a truncation function $\phi_n:\bb R^d\to \bb R^d$ given by 
		\begin{equation}
			\phi_n(x)=\left\lbrace\begin{aligned}
				&x\, \text{ when }|x|\le n,\\
				&\frac{x}{|x|}n\,\text{ when }|x|>n.
			\end{aligned}\right. 
		\end{equation}
		The function $\phi_n:\bb R^d\to\bb R^d$ is globally Lipschitz continuous; more precisely, 
		\begin{equation}\label{eq.phi}
			|\phi_n(x)-\phi_n(y)|\le |x-y|, \, \text{for all }x,\, y\in \bb R^d.
		\end{equation}
		Given $n\in\bb N^+$, define the truncated drift function $b_n:[0,\infty)\times \bb R^d\to \bb R^d$ by $$b_n(t,x)=b(t,\phi_n(x)),\text{ for all } x \in\bb R^d.$$
		By Assumption~(H1) and Eqn.~\eqref{eq.phi}, the function $b_n:[0,\infty)\times\bb R^d\to \bb R^d$ is globally Lipschitz continuous. Besides, since the drift function $b$ satisfies a linear growth condition~(H2), for any $x\in\bb R^d$, we have
		\begin{equation}\label{eqn:grobn}
			|b_n(t,x)|=|b(t,\phi_n(x))|\le L_{b,2}(1+|\phi_n(x)|)\le L_{b,2}(1+|x|),
		\end{equation}
		where $L_{b,2}>0$ is the linear growth constant, independent of $n$.
		
		Now, for each $n\in\bb N^+$, consider the following dynamically orthogonal equation in $\bb R^d$
		\begin{align}\label{eq.yu}
			\begin{split}
				C_{Y_{t,n}}\rrm dU_{t,n}&=\bb E[Y_{t,n}b_n(t,U_{t,n}^\top Y_{t,n})^\top](I_{d}-P_{U_{t,n}})\,\rrm dt,\\
				\rrm d Y_{t,n}&=U_{t,n}b_n(t,U_{t,n}^\top Y_{t,n})\,\rrm dt+U_{t,n}\sigma(t,U_{t,n}^\top Y_{t,n})\,\rrm dB_t.
			\end{split}
		\end{align}
		According to~\cite[Theorem 4.6]{DLRASDE}, Eqn.~\eqref{eq.yu} admits a unique solution $(U_{t,n},Y_{t,n})$ for all $t\ge0$.  The proof of~\cite[Proposition~4.5]{DLRASDE} relies only on the linear growth of $b_n$ and $\sigma$ (see Eqn.~\eqref{eqn:grobn} and condition~(H4)), on condition~(H5), and on the existence of $\sigma_{Y_0}>0$ such that $\mathbb{E}[Y_0Y_0^\top]\succeq\sigma_{Y_0}I_r$. All these conditions are independent of $n$; therefore the same proof directly yields a constant $\sigma_Y>0$, also independent of $n$, such that
        \begin{equation}\label{eqn:semiposi}
            C_{Y_{t,n}}\succeq \sigma_YI_r.
        \end{equation}

        For any $n\in\bb N^+$, define a stopping time $\tau_n$ by 
		\begin{equation}\label{eq.tau}
			\tau_n=\inf\{t\ge0,|Y_{t,n}|>n\}.	
		\end{equation}
		We now divide the proof into four parts to establish the desired result.
		\\
		\textbf{I. Prove $Y_{t\wedge\tau_n,n+1}=Y_{t\wedge\tau_n,n}$ and $U_{t\wedge\tau_n,n+1}=U_{t\wedge\tau_n,n}$ a.s. with respect to $\bb P$ for all $t\ge0,\, n\in\bb N^+$.}
		
		Observe that $|U_{s,n}^\top Y_{s,n}|\le n$ for all $s\in[0,\tau_n)$, since $U_{s,n}$ is orthonormal and $|Y_{s,n}|\le n$ by definition of $\tau_n$. Therefore, the truncation function satisfies $$\phi_{n+1}(U_{s,n}^\top Y_{s,n})=\phi_n(U_{s,n}^\top Y_{s,n}),$$ and hence
		\begin{equation}
			b_{n+1}(t,U_{s,n}^\top Y_{s,n})=b_n(t,U_{s,n}^\top Y_{s,n}),\, \text{for all }s\in[0,\tau_n).
		\end{equation}
		By Eqn.~\eqref{eq.yu} and  H\"older's inequality, we estimate the difference between the two solutions 
		\begin{align*}
			|Y_{t\wedge\tau_n,n+1}-Y_{t\wedge\tau_n,n}|^2&\le 2t\int_{0}^{t\wedge\tau_n}\big|U_{s,n+1}b_{n+1}(s,U_{s,n+1}^\top Y_{s,n+1})-U_{s,n}b_{n+1}(s,U_{s,n}^\top Y_{s,n})\big|^2\,\rrm ds\\&\quad			+2\Big|\int_{0}^{t\wedge\tau_n}\big(U_{s,n+1}\sigma(t,U^\top_{s,n+1}Y_{s,n+1})+U_{s,n}\sigma(t,U^\top_{s,n}Y_{s,n})\big)\,\rrm dB_s\Big|^2.
		\end{align*}
		Applying the Burkholder-Davis-Gundy (BDG) inequality, the global Lipschitz continuity of $b_n$, (H3) and taking expectations, we obtain the existence of a constant $C_n>0$ depending on $n$ such that
		\begin{align*}
			\bb E\big[\sup_{0\le s\le t}|Y_{s\wedge\tau_n,n+1}-Y_{s\wedge\tau_n,n}|^2\big]&\le 2t\bb E\Big[\sup_{0\le r\le t}\int_{0}^{r\wedge\tau_n}\big|U_{s,n+1}b_{n+1}(s,U_{s,n+1}^\top Y_{s,n+1})-U_{s,n}b_{n+1}(s,U_{s,n}^\top Y_{s,n})\big|^2\,\rrm ds\Big]\\&\quad 
			+2\bb E\Big[\sup_{0\le r\le t}\Big|\int_{0}^{r\wedge\tau_n}\big(U_{s,n+1}\sigma(t,U^\top_{s,n+1}Y_{s,n+1})+U_{s,n}\sigma(t,U^\top_{s,n}Y_{s,n})\big)\,\rrm dB_s\Big|^2\Big]\\&\le 
			C_n\int_{0}^{t}\Big(\bb E\big[\sup_{0\le r\le s}|Y_{r\wedge\tau_n,n+1}-Y_{r\wedge\tau_n,n}|^2\big]+\bb E\big[\sup_{0\le r\le s}\|U_{r\wedge\tau_n,n+1}-U_{r\wedge\tau_n,n}\|^2_\rrm{HS}\big]\Big)\rrm ds.    
		\end{align*}
		Since Assumption (H5) holds, \cite[Proposition 4.5]{DLRASDE} implies that both $C_{Y_{t,n}}$ and $C_{Y_{t,n+1}}$ are invertible.  Therefore, by~\cite[Lemma A.4]{DLRASDE}, we obtain the estimate
		\begin{align*}
		\bb E\big[\sup_{0\le s\le t}\|U_{s\wedge\tau_n,n+1}-U_{s\wedge\tau_n,n}\|^2_\rrm{HS}\big]\le C_n\int_{0}^{t}\Big(\bb E\big[\sup_{0\le r\le s}|Y_{r\wedge\tau_n,n+1}-Y_{r\wedge\tau_n,n}|^2\big]+\bb E\big[\sup_{0\le r\le s}\|U_{r\wedge\tau_n,n+1}-U_{r\wedge\tau_n,n}\|^2_\rrm{HS}\big]\Big)\rrm ds.
		\end{align*}
		Combining the two inequalities and applying Gr\"onwall's inequality, we conclude that 
		$$\bb E\big[\sup_{0\le s\le t}|Y_{s\wedge\tau_n,n+1}-Y_{s\wedge\tau_n,n}|^2\big]+\bb E\big[\sup_{0\le s\le t}\|U_{r\wedge\tau_n,n+1}-U_{r\wedge\tau_n,n}\|^2_\rrm{HS}\big]=0.$$
		Therefore, for all $n\in\bb N^+$, we have
        \begin{equation}\label{eq:same}
            Y_{t\wedge\tau_n,n+1}=Y_{t\wedge\tau_n,n}, \text{ and } U_{t\wedge\tau_n,n+1}=U_{t\wedge\tau_n,n}\,\text{ for all $t\ge0\ $ a.s}.
        \end{equation}
		
		From Eqn.~\eqref{eq:same} and the definition of $\tau_n$ in Eqn.~\eqref{eq.tau}, it follows that the sequence $(\tau_n)_{n\in\bb N^+}$ is non-decreasing; that is,  $$\tau_{n+1}\ge\tau_n,\quad \text{for all }n\in\bb N.$$
		\textbf{II. Proof that $\bb E[\, \sup_{0\le s\le t}|Y_{s,n}|^2]$ is bounded.}
		Recall that for any $x\in\bb R^d$
		\begin{equation*}
			|b_n(t,x)|\le L_{b,2}(1+|x|).
		\end{equation*}

		Using (H4) together with Eqn.~\eqref{eqn:grobn}, and applying H\"older's inequality and the BDG inequality, we obtain a constant $C_{t,b,\sigma} > 0$ independent of $n$ such that
		\begin{align*}
			\bb E\big[\sup_{0\le s\le t}|Y_{s,n}|^2\big]&\le \bb E[|Y_{0,n}|^2]+2t\bb E\Big[\sup_{0\le s\le t}\int_0^s|U_{r,n}b_n(r,U_{r,n}^\top Y_{r,n})|^2\,\rrm dr\Big]\\&\quad+2\bb E\Big[\sup_{0\le s\le t}\Big|\int_0^sU_{r,n}\sigma(r,U_{r,n}^\top Y_{r,n})\,\rrm dB_r\Big|^2\Big]
			\\&\le \bb E[|Y_0|^2]+C_{t,b,\sigma}\int_{0}^{t}\bb E\big[\sup_{0\le r\le s}|Y_{r,n}|^2\big]\,\rrm ds.
		\end{align*}
		Applying Gr\"onwall's inequality yields the desired bound
		\begin{equation}\label{eq.Yn}
			\bb E\big[\sup_{0\le s\le t}|Y_{s,n}|^2\big]\le C_{t,b,\sigma}\bb E[|Y_0|^2].
		\end{equation}
		
		Since the stopping times $\tau_n$ are non-decreasing almost surely, we may define the limiting stopping time $\tau_\infty$ by 
		$$\tau_\infty=\lim_{n\to\infty}\tau_n.$$
		\textbf{III. Proof that $\tau_\infty=+\infty$ almost surely.}\\
		From the definition of $\tau_n$ in Eqn.~\eqref{eq.tau}, we observe that for all $\widetilde T> 0$,
		\begin{equation}
			\{\tau_n<\widetilde T\}\subset\big\lbrace \sup_{0\le t\le\widetilde T}|Y_{s,n}|\ge n\big\rbrace.
		\end{equation}
		Applying Chebyshev’s inequality and the bound from Eqn.~\eqref{eq.Yn}, we obtain
		\begin{align*}
			\bb P(\tau_n\le\widetilde T)\le \bb P\big(\sup_{0\le t\le\widetilde T}|Y_{s,n}|\ge n\big)\le \frac{1}{n^2}\bb E\big[\sup_{0\le t\le\widetilde T}|Y_{s,n}|^2\big]\le \frac{C_{\widetilde T,b,\sigma}\bb E[|Y_0|^2]}{n^2},
		\end{align*}
		which implies 
		$$\sum_{n=0}^{\infty}	\bb P(\tau_n\le\widetilde T)\le \sum_{n=0}^{\infty}\frac{C_{\widetilde T,b,\sigma}\bb E[|Y_0|^2]}{n^2}<\infty.$$
		By the Borel-Cantelli lemma, we conclude
		$$\bb P\big( \bigcap_{i=1}^{\infty} \bigcup_{n=i}^{\infty}\{\tau_n\le\widetilde T\}\big)=0.$$
		Thus, there exists a $\bb P$-null set $\Omega_n\subset\Omega$ with $\bb P(\Omega_n)=0$ such that for every $\omega\in \Omega\setminus \Omega_n$, we can find an integer $N_0=N_0(\omega)$ satisfying $$\tau_n(\omega)\ge\widetilde T,\, \text{for all } n\ge N_0.$$ 
		This implies 
		$$\tau_\infty(\omega)=\lim_{n\to \infty}\tau_n(\omega)\ge\widetilde T,\, \text{for all }\omega\in \Omega\setminus \Omega_n.$$
		Let $\Omega_0:=\bigcup_{n=1}^\infty\Omega_n$. We have $\bb P(\Omega_0)=0$, and for all $\omega\in \Omega\setminus \Omega_0$, $$\tau_\infty(\omega)\ge T,\, \text{for all } T> 0,$$
		which implies $\tau_\infty=+\infty$ almost surely.
		\\
		\textbf{IV. Construct the global solution  $(U,Y)$.}
		
		For each fixed $n\in\bb N^+$ and any $T\ge0$, let $(U_{\cdot,n},Y_{\cdot,n})$ be the unique solution to Eqn.~\eqref{eq.yu}. From Part I and II, there exists $\Omega_0\subset \Omega$ with $\bb P(\Omega_0)=0$ such that for all $n\in\bb N^+$, $\omega\in\Omega\setminus \Omega_0$ and $t\ge0$,  $$\tau_\infty(\omega)=\lim_{n\to\infty}\tau_n(\omega)=+\infty,\ Y_{t\wedge\tau_n,n+1}=Y_{t\wedge\tau_n,n},\,\text{and }U_{t\wedge\tau_{n},n+1}=U_{t\wedge\tau_{n},n}.$$
		This shows that for every $\omega\in\Omega\setminus\Omega_0$ and $t\ge0$, there exists $N_0=N_0(\omega)>0$ such that
		\begin{equation}\label{eq.yut}
		 Y_{t,n}=Y_{t,N_0},\ U_{t,n}=U_{t,N_0},\text{ and }\tau_n>t,\ \forall~n\ge N_0.
		\end{equation}
		Define mappings $U:[0,T]\to \bb R^{r\times d}$ and $Y:[0,T]\times \Omega\to L^2(\Omega,\bb R^r)$ by 
		\begin{equation}\label{eq.map}
			U_t:=\lim_{n\to\infty}U_{t,n},
			\quad 
			Y_t=\left\lbrace\begin{aligned}
				&Y_{t,n}, \text{ when }\omega\in\Omega\setminus\Omega_0,\,  t\in[0,\tau_n];\\
				&Y_{0},\text{ when }\omega\in\Omega_0,\,  t\ge0.
			\end{aligned} \right. 
		\end{equation}
		By Eqn.~\eqref{eq.yut}, for any $\omega\in\Omega\setminus\Omega_0$, there exists $N_1=N_1(\omega)$ such that \begin{equation}\label{eqn:relation}
		    \tau_n(\omega)\ge t,\, \text{and }  Y_{t,n}=Y_{t,N_1},\, U_{t,n}=U_{t,N_1}, \text{ for all }  n\ge N_1.
		\end{equation}
		Hence, for any $m>0$, 
		\begin{align*}
			\|U_{t,m+n}-U_{t,n}\|^2_\rrm{HS}&=\bb E\big[\|U_{t,m+n}-U_{t,n}\|^2_\rrm{HS}\,(\I_{\{t>\tau_n\}}+\I_{\{t\le\tau_n\}})\big]\\&=\bb E\big[\|U_{t,m+n}-U_{t,n}\|^2_\rrm{HS}\,\I_{\{t>\tau_n\}}\big]+\bb E\big[\|U_{t,m+n}-U_{t,n}\|^2_\rrm{HS}\,\I_{\{t\le\tau_n\}}\big]\\&= \|U_{t,m+n}-U_{t,n}\|^2_\rrm{HS}\,\bb P(\tau_n<t)\\&\le \|U_{t,m+n}-U_{t,n}\|^2_\rrm{HS}\,\bb P(\Omega_0)\\&\le 0.
		\end{align*}
		This confirms that the mappings $U,\, Y$ defined in Eqns.~\eqref{eq.map} are well-defined.  Moreover, since each $U_{\cdot,n}$ is absolutely continuous and each $Y_{\cdot,n}$ is almost surely continuous, it follows from Eqns.~\eqref{eq.Yn} and~\eqref{eq.map} that $U_\cdot$ is absolutely continuous and $Y_\cdot$ is almost surely continuous. For each fixed $t\ge0$, we have
		$$\lim_{n\to \infty}Y_{t,n}=Y_t, \text{ for all }\omega\in \Omega\setminus\Omega_0,$$
		where we know that $Y_t$ is $\scr F_t$-adapted. By Eqns.~\eqref{eq.Yn},~\eqref{eq.map} as well as Fatou's Lemma, we have 
        \begin{equation}
            \bb E[|Y_t|^2]\le \liminf_{n\to\infty}\bb E[|Y_{t,n}^2]\le C_{t,b,\sigma}\bb E[|Y_0|^2],
        \end{equation}
        where $C_{t,b,\sigma}$ is the same number as Eqn.~\eqref{eq.Yn}.
        
        Since $\Omega_0$ is a null set, for every $n$ we have 
        \begin{equation}\label{eqn.as}
        |Y_{t,n}-Y_{t}|^2 \I_{\Omega_0} = 0 \qquad \text{almost surely}. 
        \end{equation}
        Moreover, by Eqns.~\eqref{eq.map} and~\eqref{eqn:relation}, for every $\omega \in \Omega\setminus\Omega_0$, there exists $N_2 = N_2(\omega)$ such that $\tau_{N_2}(\omega) \ge T$, and
        \begin{equation}\label{eqn.equa}
            Y_{t,n}=Y_{t,N_2}=Y_{t},\text{ for all }n\ge N_2,\,t\in[0,T].
        \end{equation}
        Combining Eqn.~\eqref{eqn.as} and Eqn.~\eqref{eqn.equa}, we obtain, for every $t\in[0,T]$,
        \begin{equation}\label{eqn:L2conver}
             \lim_{n\to\infty}\bb E[|Y_{t,n}-Y_{t}|^2]=\lim_{n\to\infty}\bb E[|Y_{t,n}-Y_{t}|^2\I_{\Omega\setminus\Omega_0}]+\lim_{n\to\infty}\bb E[|Y_{t,n}-Y_{t}|^2\I_{\Omega_0}]=0.
        \end{equation}
		 
		Next, we show that $(U,Y)$ is a solution of Eqn.~\eqref{eq:DO}. Since $(U_{\cdot,n},Y_{\cdot,n})$ is a solution of Eqn.~\eqref{eq.yu}, we have 
		 \begin{align}\label{eq.yy}
         Y_{t\wedge\tau_n,n}=Y_{0}+\int_0^{t\wedge\tau_n}U_{s,n}b_n(s,U_{s,n}^\top Y_{s,n})\,\rrm ds+\int_0^{t\wedge\tau_n}U_{s,n}\sigma(s,U_{s,n}^\top Y_{s,n})\,\rrm dB_s.
		 \end{align}
		 By Eqns.~\eqref{eq.map}, we have $Y_{t\wedge\tau_n,n}=	Y_{t\wedge\tau_n}$ almost surely. Moreover, by the definitions of $b_n$, for any $x\in\bb R^d$, we get 
		 $$|b_n(t,x)-b(t,x)|=|b(t,\phi_n(x))-b(t,x)|\le C_x|\phi_n(x)-x|\to0\, \text{ as }n\to\infty.$$
		 Taking $n\to+\infty$, we get from Eqn.~\eqref{eq.yy},
		 \begin{equation}
		 	Y_{t}=Y_{0}+\int_0^{t}U_{s}b(s,U_{s}^\top Y_{s})\,\rrm ds+\int_0^{t}U_{s}\sigma(s,U_{s}^\top Y_{s})\,\rrm dB_s.
		 \end{equation}
		 On the other hand, from Eqns.~\eqref{eq.Yn} and~\eqref{eqn:L2conver} we deduce that
		 \begin{equation}
		 		\lim_{n\to \infty}C_{Y_{t,n}}=C_{Y_{t}}.
		 \end{equation} 
     From \eqref{eqn:semiposi} and \cite[Theorem 3.3.16(c)]{horn2012matrix} we obtain $C_{Y_{t}}\succeq\sigma_YI_r$.
		 Moreover, by the dominated convergence theorem, we obtain
		 \begin{align*}
		 	\lim_{n\to\infty}\bb E\big[Y_{t,n}b_n(t,U_{t,n}^\top Y_{t,n})^\top\big]=\bb E\big[\lim_{n\to\infty}Y_{t,n}b_n(t,U_{t,n}^\top Y_{t,n})^\top\big]=\bb E\big[Y_{t}b(t,U_{t}^\top Y_{t})^\top\big].
		 \end{align*}
		 Taking $n\to+\infty$ from Eqn.~\eqref{eq.yu}, we deduce that the limiting process $U_t$ satisfies
		 \begin{equation}
		 	U_{t}=U_0+\int_0^t C^{-1}_{Y_s}\bb E\big[Y_{s}b(t,U_{s}^\top Y_{s})^\top\big](I_{d}-P_{U_{s}})\,\rrm ds.
		 \end{equation}
		This confirms that the pair $(U,Y)$ constructed above is indeed a global solution to the original system \eqref{eq:DO}.\\
		\textbf{V. Uniqueness of the Solution $(U,Y)$.}
		
		Suppose $(U,Y)$ and $(V,Z)$ are two solutions of Eqn.~\eqref{eq:DO} with the same data $(U_0,Y_0)$.  For each $n\in\bb N^+$ and fixed $T>0$, define a stopping time $\tau_n$ by
		\begin{equation}
			\tau_n=\inf\{t\ge0:|Y_t|\ge n\text{ or }|Z_t|\ge n\}\wedge T.
		\end{equation}
		Using the same method as in Part I (e.g., H\"older's inequality, BDG's inequality and Lipschitz continuity of $b$), we obtain the following estimates for all $t\in[0,T]$
		\begin{equation}
			\bb E\big[\sup_{0\le s\le t}|Y_{s\wedge\tau_n}-Z_{s\wedge\tau_n}|^2\big]\le C\int_{0}^{t}\Big(\bb E\big[\sup_{0\le r\le s}|Y_{r\wedge\tau_n}-Z_{r\wedge\tau_n}|^2\big]+\bb E\big[\sup_{0\le r\le s}\|U_{r\wedge\tau_n}-V_{r\wedge\tau_n}\|^2_\rrm{HS}\big]\Big)\rrm ds,
		\end{equation}
		and 
		\begin{equation}
			\bb E\big[\sup_{0\le s\le t}\|U_{s\wedge\tau_n}-V_{s\wedge\tau_n}\|^2_\rrm{HS}\big]\le C\int_{0}^{t}\Big(\bb E\big[\sup_{0\le r\le s}|Y_{r\wedge\tau_n}-Z_{r\wedge\tau_n}|^2\big]+\bb E\big[\sup_{0\le r\le s}\|U_{r\wedge\tau_n}-V_{r\wedge\tau_n}\|^2_\rrm{HS}\big]\Big)\rrm ds.
		\end{equation}
        By combining the two inequalities and applying Gr\"owall's inequality, we obtain that
		\begin{equation}
			\bb E\big[\sup_{0\le s\le t}|Y_{s\wedge\tau_n}-Z_{s\wedge\tau_n}|^2\big]+\bb E\big[\sup_{0\le s\le t}\|U_{s\wedge\tau_n}-V_{s\wedge\tau_n}\|^2_\rrm{HS}\big]=0,
		\end{equation}
		which implies $Y_{s\wedge\tau_n}=Z_{s\wedge\tau_n}$ for all $s\in[0,T]$ almost surely with respect to $\bb P$. Letting $n\to\infty$, we obtain $Y_{s}=Z_{s}$ for all $s\in[0,T]$ almost surely. 
		
		To show $U_t=V_t$, note that
		$$\bb E\big[\|U_{s\wedge\tau_n}-V_{s\wedge\tau_n}\|^2_\rrm{HS}\big]=0,\, \text{for all } s\in[0,T].$$ 

        The dominated convergence theorem yields that 
		\begin{equation}
				\|U_{s}-V_{s}\|^2_\rrm{HS}=\bb E\big[\|U_{s}-V_{s}\|^2_\rrm{HS}\big]=\bb E\big[\lim_{n\to\infty}\|U_{s\wedge\tau_n}-V_{s\wedge\tau_n}\|^2_\rrm{HS}\big]=\lim_{n\to\infty}\bb E\big[\|U_{s\wedge\tau_n}-V_{s\wedge\tau_n}\|^2_\rrm{HS}\big]=0.
		\end{equation}
		Thus, $U_t=V_t$ for all $t\in[0,T]$. 
        
       Consequently, for any $T>0$, the SDE~\eqref{eq.sec4} has a unique strong DO solution on $(0,T]$.
	\end{proof}	

    In Part IV of the proof of Proposition~\ref{wellposedness}, for the DO solution  the covariance matrix $C_{Y_t}$ is invertible for all $t>0$.

    Notice that in the DLRA~\eqref{eqn:intro_xr}, the factorisation \((U,Y)\) is not unique.  
Since \(X_t^{(r)} = U_t^\top Y_t\), for any \(Q \in O(r)\) we may consider \((\widetilde U_t,\widetilde Y_t) := (Q U_t, Q Y_t)\), which also satisfies \(X_t^{(r)} = \widetilde U_t^\top \widetilde Y_t\).  
The following theorem shows that \((U_t,Y_t)\) and \((\widetilde U_t,\widetilde Y_t)\) result in the same system~\eqref{eq:DO}.  
See also \cite[Proposition 2.10]{DLRASDE} for a similar result.

\begin{theorem}\label{theo:rotation}
Suppose Assumptions~(H1)--(H5) hold, and let $(U_t,Y_t)_{t\ge0}$ be the solution of Eqn.~\eqref{eq:DO} with initial condition $U_0 \in V_r(\bb R^d)$ and $Y_0 \in L^2(\Omega,\bb R^r)$. Assume further that there exists $\sigma_{Y_0} > 0$ such that $\bb E[Y_0 Y_0^\top] \succeq \sigma_{Y_0} I_r$.
For any $Q\in O(r)$, define $\al R_Q(u,y) := (Qu, Qy)$. Then, for any $t>0$ we have
\begin{equation}
    \scr L_{(U_t^{\al R_Q(U_0,Y_0)},Y_t^{\al R_Q(U_0,Y_0)})}=\scr L_{(QU_t^{(U_0,Y_0)},QY_t^{(U_0,Y_0)})},
\end{equation}
where $(U_0^{(U_0,Y_0)},Y_0^{(U_0,Y_0)})=(U_0,Y_0)$ and $(U_0^{\al R_Q(U_0,Y_0)},Y_0^{\al R_Q(U_0,Y_0)})=(QU_0,QY_0)$.
\end{theorem}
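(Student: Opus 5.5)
The plan is to prove the stronger pathwise statement: the rotated pair $(\widetilde U_t,\widetilde Y_t):=(QU_t,QY_t)$, built from the solution $(U_t,Y_t)=(U_t^{(U_0,Y_0)},Y_t^{(U_0,Y_0)})$, is itself a strong DO solution of \eqref{eq:DO} with initial data $(QU_0,QY_0)$. Uniqueness in Proposition~\ref{wellposedness} then gives $(U_t^{\al R_Q(U_0,Y_0)},Y_t^{\al R_Q(U_0,Y_0)})=(QU_t,QY_t)$ for all $t$ almost surely, and equality of laws follows.

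First I check that the initial data and the structural conditions (1)--(3) are preserved. Since $Q\in O(r)$, we have $QU_0(QU_0)^\top=QQ^\top=I_r$. Moreover
\[
\bb E[QY_0(QY_0)^\top]=QC_{Y_0}Q^\top\succeq\sigma_{Y_0}I_r .
\]
Hence Proposition~\ref{wellposedness} applies to the data $(QU_0,QY_0)$. Along the trajectory, $\widetilde U_t\widetilde U_t^\top=I_r$ and
\[
\widetilde U_t\,\partial_t\widetilde U_t^\top=QU_t\,\partial_tU_t^\top Q^\top=O_r .
\]
Adaptedness and continuity of $\widetilde Y$ are clear, and linear independence of the components is preserved because $Q$ is invertible.

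Second, I verify the equations. The key invariances are
\[
\widetilde U_t^\top\widetilde Y_t=U_t^\top Q^\top QY_t=U_t^\top Y_t,
\qquad
P_{\widetilde U_t}=U_t^\top Q^\top QU_t=P_{U_t},
\qquad
C_{\widetilde Y_t}=QC_{Y_t}Q^\top .
\]
For the $Y$-equation, multiplying \eqref{eq:DO} by $Q$ gives
\[
\rrm d\widetilde Y_t=\widetilde U_tb(t,\widetilde U_t^\top\widetilde Y_t)\,\rrm dt+\widetilde U_t\sigma(t,\widetilde U_t^\top\widetilde Y_t)\,\rrm dB_t .
\]
For the $U$-equation, I compute
\[
C_{\widetilde Y_t}\,\rrm d\widetilde U_t=QC_{Y_t}Q^\top Q\,\rrm dU_t=Q\,C_{Y_t}\rrm dU_t=Q\,\bb E[Y_tb(t,U_t^\top Y_t)^\top](I_d-P_{U_t})\,\rrm dt .
\]
The right-hand side equals $\bb E[\widetilde Y_tb(t,\widetilde U_t^\top\widetilde Y_t)^\top](I_d-P_{\widetilde U_t})\,\rrm dt$ by the invariances above. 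So $(\widetilde U,\widetilde Y)$ solves \eqref{eq:DO}, driven by the same Brownian motion, on every interval $[0,T]$.

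The main obstacle is mild: one must make sure the uniqueness in Proposition~\ref{wellposedness} is genuinely pathwise, i.e.\ with the same Brownian motion on the same probability space. It is, since Part V compares two strong solutions with identical data. With that in hand, the identity of the pairs holds almost surely for each $t>0$ (indeed for all $t$ simultaneously by continuity), which yields the claimed equality of joint laws of $(U_t,Y_t)$ (with $U_t$ deterministic).
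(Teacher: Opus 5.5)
Your proposal is correct and follows essentially the same route as the paper. Both arguments show that $(QU_t,QY_t)$ solves \eqref{eq:DO} with data $(QU_0,QY_0)$, using $(QU_t)^\top QY_t=U_t^\top Y_t$, $P_{QU_t}=P_{U_t}$ and $C_{QY_t}=QC_{Y_t}Q^\top$, and then invoke the uniqueness in Proposition~\ref{wellposedness}. Your extra checks fill in details the paper leaves implicit: that $QC_{Y_0}Q^\top\succeq\sigma_{Y_0}I_r$, so the well-posedness result applies to the rotated data, and that the constraints \eqref{eq:U} are preserved.
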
 

\begin{proof}
Let $(U_{t},Y_{t})$ be the solution of Eqn.~\eqref{eq:DO} with initial
condition $(U_0,Y_0)$. Define 
\[
\widetilde{U}_{t}:=QU_{t},\qquad\widetilde{Y}_{t}:=QY_{t}.
\]
Then 
\[
\widetilde{U}_{t}^{\top}\widetilde{Y}_{t}=U_{t}^{\top}Q^{\top}QY_{t}=U_{t}^{\top}Y_{t}\quad\text{ and }\quad P_{\widetilde{U}_{t}}=\widetilde{U}_{t}^{\top}\widetilde{U}_{t}=U_{t}^{\top}Q^{\top}QU_{t}=P_{U_{t}}.
\]

We first verify that the rotated process $\widetilde{Y}_{t}$ satisfies the same equation as $Y_t$. Since $Q$ is constant in $t$, 
\[
\rrm d\widetilde{Y}_{t}=Q\,\rrm dY_{t}.
\]
Using Eqn.~\eqref{eq:DO} for $Y_{t}$, we get 
\[
\begin{aligned}\rrm d\widetilde{Y}_{t} & =QU_{t}b(U_{t}^{\top}Y_{t})\,\rrm dt+QU_{t}\sigma(U_{t}^{\top}Y_{t})\,\rrm dB_{t}\\
 & =\widetilde{U}_{t}b(\widetilde{U}_{t}^{\top}\widetilde{Y}_{t})\,\rrm dt+\widetilde{U}_{t}\sigma(\widetilde{U}_{t}^{\top}\widetilde{Y}_{t})\,\rrm dB_{t}.
\end{aligned}
\]
Thus the rotated process satisfies the same $Y$-equation.

We now verify that the rotated process $\widetilde{U}_{t}$ satisfies the same equation as $U_t$. Since 
\[
C_{\widetilde Y_t}=\bb E[\widetilde Y_t\widetilde Y_t^\top]=Q\bb E[ Y_tY_t^\top]Q^{\top}=QC_{Y_t}Q^{\top}.
\]
Hence $QC_{Y_t}=C_{\widetilde Y_t}Q$.

Moreover, we have 
\[
\begin{aligned}\bb E[\widetilde Y_tb(t,\widetilde U_t^\top \widetilde Y_t)^\top]=Q\bb E[Y_tb(t,U_t^\top Y_t)^\top].
\end{aligned}
\]
Therefore, 
\[
\begin{aligned}C_{\widetilde Y_t}\,\rrm d\widetilde{U}_{t}  =C_{\widetilde Y_t}Q\,\rrm dU_{t}=QC_{Y_t}\,\rrm dU_{t}
 =Q\bb E[Y_tb(t,U_t^\top Y_t)^\top](I_{d}-P_{{U}_{t}})\,\rrm dt
  =\bb E[\widetilde Y_tb(t,\widetilde U_t^\top \widetilde Y_t)^\top](I_{d}-P_{\widetilde{U}_{t}})\,\rrm dt.
\end{aligned}
\]
Thus $(\widetilde{U}_{t},\widetilde{Y}_{t})$ solves the frozen system
with initial condition $(QU_0,QY_0)$.

By Theorem~\ref{wellposedness}, for every initial condition satisfying the requirement that there exists a constant $\widetilde\sigma>0$ such that $\bb E[Y_0Y_0^\top]\succeq\widetilde\sigma I_r$, Eqn.~\eqref{eq:DO} admits a pathwise unique strong solution whose law is uniquely determined by the initial condition. Therefore, 
\[
\scr L_{(U_t^{QU_0,QY_0},Y_t^{QU_0,QY_0})}=\scr L_{(QU_t^{U_0,Y_0},QY_t^{U_0,Y_0})}.
\]
This proves the claim. 

\end{proof}
	
Recall that $P_{Y_t}=\bb E[\cdot Y_t^\top]C_{Y_t}^{-1}Y_t$. We now state a key property of the operator $P_{Y_t}$ for later use.
    \begin{lemma}\label{le.dim}
        The operator $P_{Y_t}$ is defined from $L^2(\Omega,\bb R^d)$ to itself. Moreover, the norm $$\|P_{Y_t}\|_{L^2(\Omega,\bb R^d)\to L^2(\Omega,\bb R^d)}\le 1.$$
    \end{lemma}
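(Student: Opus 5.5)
The plan is to show that $P_{Y_t}$ is the orthogonal projection in the Hilbert space $L^2(\Omega,\bb R^d)$ onto the closed subspace
\[
\al V_t:=\Big\{\sum_{i=1}^r a_i\,y_{t,i}:\ a_1,\dots,a_r\in\bb R^d\Big\}=\{A Y_t:\ A\in\bb R^{d\times r}\}.
\]
Orthogonal projections have operator norm at most $1$, so the bound will follow at once. Throughout we use that $C_{Y_t}$ is invertible. This holds because the components of $Y_t$ are linearly independent in $L^2(\Omega)$, and it was shown quantitatively ($C_{Y_t}\succeq\sigma_Y I_r$) in Part IV of the proof of Proposition~\ref{wellposedness}.

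\textbf{Step 1 (well-definedness).} Fix $u\in L^2(\Omega,\bb R^d)$. By Cauchy--Schwarz, each entry of the $d\times r$ matrix $\bb E[uY_t^\top]$ is finite, since $u$ and $Y_t$ are both in $L^2$. Hence $P_{Y_t}u=\bb E[uY_t^\top]C_{Y_t}^{-1}Y_t$ has the form $AY_t$ with the deterministic matrix $A=\bb E[uY_t^\top]C_{Y_t}^{-1}$. It therefore lies in $\al V_t\subset L^2(\Omega,\bb R^d)$, and linearity in $u$ is clear. Note that $\al V_t$ is finite-dimensional and hence closed.

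\textbf{Step 2 (idempotence and orthogonality).} For $u=BY_t\in\al V_t$ we compute
\[
\bb E[BY_tY_t^\top]C_{Y_t}^{-1}Y_t=BC_{Y_t}C_{Y_t}^{-1}Y_t=BY_t,
\]
so $P_{Y_t}$ acts as the identity on $\al V_t$. For general $u$ and any $B\in\bb R^{d\times r}$, we check that $u-P_{Y_t}u\perp BY_t$ in $L^2(\Omega,\bb R^d)$:
\[
\bb E\big[(BY_t)^\top(u-P_{Y_t}u)\big]=\rrm{Tr}\Big(B^\top\big(\bb E[uY_t^\top]-\bb E[uY_t^\top]C_{Y_t}^{-1}C_{Y_t}\big)\Big)=0.
\]
This uses the symmetry of $C_{Y_t}$ and $\bb E[(BY_t)^\top v]=\rrm{Tr}(B^\top\bb E[vY_t^\top])$.

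\textbf{Step 3 (norm bound).} By Pythagoras, $\|u\|^2=\|P_{Y_t}u\|^2+\|u-P_{Y_t}u\|^2\ge\|P_{Y_t}u\|^2$, which gives $\|P_{Y_t}\|\le1$. The norm equals $1$ since $\al V_t\neq\{0\}$.

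The only slightly delicate point is Step 2. There the trace identity must be written carefully so that the $\bb R^d$-valued inner product matches the matrix expressions, and the invertibility of $C_{Y_t}$ must be invoked from Proposition~\ref{wellposedness}. Everything else is routine.
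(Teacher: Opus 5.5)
Your proof is correct and follows the same idea as the paper's. Both identify $P_{Y_t}$ as the orthogonal projection onto the componentwise span of $y_{t,1},\dots,y_{t,r}$; the paper then concludes via self-adjointness, idempotence and Cauchy--Schwarz ($\bb E[|P_{Y_t}u|^2]=\langle P_{Y_t}^2u,u\rangle\le \|P_{Y_t}u\|\,\|u\|$), while you conclude via orthogonality of the residual and Pythagoras.

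Your version actually verifies the orthogonality, which the paper only asserts. The symmetry of $C_{Y_t}$ you invoke in Step 2 is not needed, since $\bb E[AY_tY_t^\top]=AC_{Y_t}$ directly.
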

    \begin{proof}
   Note that for any $u, v \in L^2(\Omega,\bb R^d)$,
\[
\langle u, P_{Y_t} v \rangle_{L^2(\Omega,\bb R^d)} = \langle P_{Y_t} u, v \rangle_{L^2(\Omega,\bb R^d)},
\]
where $P_{Y_t} : L^2(\Omega,\bb R^d) \to L^2(\Omega,\bb R^d)$ is the projection onto $\rrm{span}\{y_{t,1}, y_{t,2}, \dots, y_{t,r}\}$ defined component‑wise; that is, each component is projected using the projection in $L^2(\Omega,\bb R)$ onto the same span.

    We now estimate the operator norm of $P_{Y_t}$.  Since $P_{Y_t}$ is the projection from $L^2(\Omega,\bb R^d)$ to $L^2(\Omega,\bb R^d)$, it follows that $P^2_{Y_t}=P_{Y_t}$ on space $L^2(\Omega,\bb R^d)$. For any $u\in L^2(\Omega,\bb R^d)$ satisfying $\bb E[|u|^2]=1$, we have 
		\begin{equation*}
			\begin{aligned}
				\bb E[|P_{Y_t}u|^2]=\left\langle P_{Y_t}u,P_{Y_t}u\right\rangle_{L^2(\Omega,\bb R^d)}=\left\langle P^2_{Y_t}u,u\right\rangle_{L^2(\Omega,\bb R^d)}\le \big(\bb E[|P_{Y_t}u|^2]\big)^\frac{1}{2}\big(\bb E[|u|^2]\big)^\frac{1}{2}.
			\end{aligned}
		\end{equation*}
		Taking the supremum over such $u$, it follows that
		$$\|P_{Y_t}\|_{L^2(\Omega,\bb R^d)\to L^2(\Omega,\bb R^d)}=\sup_{u\in L^2(\Omega,\bb R^d) \text{ and }\bb E[|u|^2]=1}\big(\bb E[|P_{Y_t}u|^2]\big)^\frac{1}{2}\le 1.$$
    \end{proof}
	Next, we establish the consistency of the DO solution in the following sense: when $r=d$, the DO solution coincides with the original solution.
    \begin{proposition}\label{PropXandXDO}
    Suppose Assumptions (H1)--(H5) hold. Let $T>0$ and suppose $r=d$. If $X^{(r)}_t$ and $X_t$ with the same initial data $\xi\in L^2(\Omega,\bb R^d)$ both exist on $[0,T]$. Then, it follows that
		$$\bb E[\sup_{t\in[0,T]}|X^{(r)}_t-X_t|^2]=0.$$
	\end{proposition}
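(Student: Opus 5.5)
When $r=d$ the matrix $U_t\in\bb R^{d\times d}$ has orthonormal rows, so $U_t$ is orthogonal. Hence $P_{U_t}=U_t^\top U_t=I_d$ and $I_d-P_{U_t}=0$. The plan is to exploit this degeneracy directly.

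\textbf{Step 1: $U_t$ is constant.} The first equation of \eqref{eq:DO} gives $C_{Y_t}\,\rrm dU_t=0$. By Proposition~\ref{wellposedness} (and the remark after it), $C_{Y_t}\succeq\sigma_Y I_d$ is invertible for all $t$. Since $U$ is absolutely continuous, $U_t=U_0$ for all $t\in[0,T]$. The same conclusion follows from the constraint $U_t\partial_tU_t^\top=O_d$ together with invertibility of $U_t$.

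\textbf{Step 2: the DLR approximation solves the original SDE.} Substituting $P_{U_t}=I_d$ into \eqref{eqn:DLRA}, the projected drift term $(I_d-P_{U_t})P_{Y_t}b$ vanishes. We are left with
\[
\rrm dX^{(r)}_t=b(t,X^{(r)}_t)\,\rrm dt+\sigma(t,X^{(r)}_t)\,\rrm dB_t,\qquad X^{(r)}_0=U_0^\top Y_0=\xi .
\]
This can also be checked directly: $X^{(r)}_t=U_0^\top Y_t$ and $\rrm dY_t=U_0 b\,\rrm dt+U_0\sigma\,\rrm dB_t$, with $U_0^\top U_0=I_d$. Thus $X^{(r)}$ and $X$ solve the same SDE with the same initial datum.

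\textbf{Step 3: pathwise uniqueness.} Because $b$ is only locally Lipschitz (H1), we localise. Set
\[
\tau_n=\inf\{t:|X_t|\vee|X^{(r)}_t|\ge n\}\wedge T .
\]
Apply It\^o's formula (or Hölder plus BDG) to $|X_{t\wedge\tau_n}-X^{(r)}_{t\wedge\tau_n}|^2$. Use $L_{b,1}(B_n)$ for the drift on the ball $B_n$ and (H3) for the diffusion. This gives
\[
\bb E\Big[\sup_{s\le t}|X_{s\wedge\tau_n}-X^{(r)}_{s\wedge\tau_n}|^2\Big]\le C_n\int_0^t\bb E\Big[\sup_{r\le s}|X_{r\wedge\tau_n}-X^{(r)}_{r\wedge\tau_n}|^2\Big]\rrm ds ,
\]
and Gr\"onwall's inequality shows the left-hand side is zero.

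\textbf{Step 4: removing the localisation.} Both processes have continuous paths on $[0,T]$, since both are assumed to exist there. Hence $\tau_n\uparrow T$ almost surely, so $X=X^{(r)}$ on $[0,T]$ almost surely. The supremum of the difference is therefore almost surely zero, and so is its expectation.

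The only delicate point is this localisation, because the drift is merely locally Lipschitz. Since pathwise equality holds for each $n$, taking limits is straightforward and requires no uniform moment bounds.
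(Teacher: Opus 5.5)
Your proof is correct and follows the same route as the paper: for $r=d$ one has $P_{U_t}=I_d$, so \eqref{eqn:DLRA} reduces to the original SDE \eqref{eq.sec4} with the same initial datum, and uniqueness for \eqref{eq.sec4} gives the claim. The paper simply cites that uniqueness, whereas you prove it by localisation, which is needed because $b$ is only locally Lipschitz; your Step~1 ($U_t\equiv U_0$) is harmless but not needed.
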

	\begin{proof}
		When $r=d$, we have span$\{u_{t,1},u_{t,2},\cdots,u_{t,d}\}=\bb R^d$ and that $u_{t,i}$ are orthogonal basis in $\bb R^d$. Then $ P_{U_t}=U_t^\top U_t=I_{d}=U_tU_t^\top.$ Substituting it into Eqn.~\eqref{eqn:DLRA}, it yields that
		$$\rrm dX_t^{(r)}=b(t,X_t^{(r)})\,\rrm dt+\sigma(t,X^{(r)}_{t})\,\rrm d B_t,$$
        which is exactly the original SDE. 
       The result follows from the uniqueness of solution to Eqn.~\eqref{eq.sec4}.
	\end{proof}
	
	\section{Uniform-in-Time Estimation }\label{sec:uniform}
	In this section, we mainly discuss the Wasserstein distance between the solution of Eqn.~\eqref{eq.sec4} and its DLRA Eqn.~\eqref{eqn:DLRA} under the global disspative condition. 
	Denote by $\mu_t$ and $\nu_t$ the laws of $X_t$ and $X^{(r)}_t$, respectively, where $\scr L_{X_0}=\mu$ and $\scr L_{X^{(r)}_0}=\nu$. 
    
	\begin{assumption}
	    \begin{itemize}
	        \item[(H6)]
            There exists $L_{b,3}\in\bb R$ such that for all $t\in[0,\infty)$ and $x,y\in\bb R^d$, one has 
			\begin{align*}
				\left\langle x-y,b(t,x)-b(t,y)\right\rangle\le L_{b,3}|x-y|^2.
			\end{align*}
	    \end{itemize}
	\end{assumption}
    To obtain a uniform-in-time estimate, we require the following relation on the constants.
    \begin{assumption}\label{AS:relation1}
       The constants $L_{\sigma,1}$ and $L_{b,3}$ (appearing in (H3) and (H6), respectively) satisfy $2L_{\sigma,1} + 2L_{b,3} < -1$.
    \end{assumption}
		
	\begin{remark}
		Suppose  Assumptions~(H1)--(H4) and~(H6) hold. Then, for all $t\in[0,\infty)$ and $x\in \bb R^d$, we have 
		\begin{equation}\label{eqn:prop}
			\begin{aligned}
				2\left\langle x,b(t,x)\right\rangle+\|\sigma(t,x)\|^2_\rrm{HS}\le (2L_{b,3}+1+2L_{\sigma,1})|x|^2+(L_{b,2}^2+2L_{\sigma,2}).
			\end{aligned}
		\end{equation}
        {In this section we work on the uniform-in-time estimate of $X^{(r)}_t$, which requires the coefficient of $|x|^2$ on the right‑hand side of \eqref{eqn:prop} to be negative. This is precisely the reason for imposing Assumption~\ref{AS:relation1}. If that coefficient were positive, only a finite‑time estimate could be obtained.}
	\end{remark}
    The following result provides uniform-in-time bounds for the DLRA method.
	\begin{theorem}\label{bounded}
    Suppose  Assumptions~(H1)--(H6) hold. Moreover, suppose Assumption~\ref{AS:relation1} holds. 
    Then, there exists a constant $C(b,\sigma)>0$ such that for any $r\in\{1,2,\cdots,d\}$,
		\begin{equation}\label{eq,DObound}
			\sup_{t\in[0,\infty)}\bb E[|X^{(r)}_t|^2]\le C(b,\sigma)\big(1+\bb E[|X^{(r)}_0|^2]\big).
		\end{equation}
		In particular, for $r=d$ we have 
		$
			\sup_{t\in[0,\infty)}\bb E[|X_t|^2]\le C(b,\sigma)\big(1+\bb E[|X_0|^2]\big)$.
	\end{theorem}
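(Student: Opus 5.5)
The plan is to apply Itô's formula to $|X^{(r)}_t|^2$ along the DLRA dynamics \eqref{eqn:DLRA}, take expectations, and close a linear differential inequality with negative coefficient using \eqref{eqn:prop} and Assumption~\ref{AS:relation1}. It is convenient to work in reduced coordinates. Since $U_tU_t^\top=I_r$, we have $|X^{(r)}_t|=|Y_t|$, so it suffices to bound $\bb E|Y_t|^2$. By Proposition~\ref{wellposedness} the pair $(U,Y)$ exists globally, and by Part II of its proof $\bb E\sup_{s\le t}|Y_s|^2<\infty$ for each $t$.

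\textbf{Step 1 (Itô).} From the $Y$-equation in \eqref{eq:DO},
\[
\rrm d|Y_t|^2=\Big(2\langle Y_t,U_tb(t,U_t^\top Y_t)\rangle+\|U_t\sigma(t,U_t^\top Y_t)\|_\rrm{HS}^2\Big)\rrm dt+\rrm dM_t ,
\]
where $M$ is a local martingale. Localize with $\tau_n=\inf\{t:|Y_t|>n\}$ so that the stopped stochastic integral has zero mean. We then pass $n\to\infty$ using the finite-horizon moment bound together with dominated or monotone convergence.

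\textbf{Step 2 (dissipativity).} Since $\langle Y_t,U_tb\rangle=\langle U_t^\top Y_t,b\rangle=\langle X^{(r)}_t,b(t,X^{(r)}_t)\rangle$, and $\|U_t\sigma\|_\rrm{HS}^2=\|P_{U_t}\sigma\|_\rrm{HS}^2\le\|\sigma\|_\rrm{HS}^2$ because $P_{U_t}$ is an orthogonal projection, the drift of $|Y_t|^2$ is at most $2\langle x,b(t,x)\rangle+\|\sigma(t,x)\|_\rrm{HS}^2$ evaluated at $x=X^{(r)}_t$. By \eqref{eqn:prop} this is bounded by $-\lambda|X^{(r)}_t|^2+K$, where $\lambda:=-(2L_{b,3}+1+2L_{\sigma,1})>0$ by Assumption~\ref{AS:relation1} and $K:=L_{b,2}^2+2L_{\sigma,2}$. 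Note that the $U$-equation and the projection $P_{Y_t}$ in \eqref{eqn:DLRA} play no role here, because the tangential drift component $(I_d-P_{U_t})P_{Y_t}b$ is orthogonal to $X^{(r)}_t$. Working with $Y$ avoids needing to verify this separately.

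\textbf{Step 3 (Grönwall).} Setting $f(t)=\bb E|Y_t|^2$, the previous steps give $f(t)-f(s)\le\int_s^t(-\lambda f(u)+K)\,\rrm du$. Here $f$ is continuous by the moment bounds. Comparison then yields
\[
f(t)\le e^{-\lambda t}f(0)+\frac{K}{\lambda}\big(1-e^{-\lambda t}\big),
\]
which gives \eqref{eq,DObound} with $C(b,\sigma)=\max\{1,K/\lambda\}$, a constant independent of $r$. The case $r=d$ follows from Proposition~\ref{PropXandXDO}, or directly by the same computation with $U_t$ removed.

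The main technical point is the localization in Step 1. The drift is only locally Lipschitz, so one must justify that the stochastic integral is a true martingale, or pass to the limit in the stopped inequality. To do this, I would take expectations of the stopped identity, write $e^{\lambda(t\wedge\tau_n)}|Y_{t\wedge\tau_n}|^2$ via Itô's formula to obtain the exponential bound directly, and then apply Fatou's lemma as $n\to\infty$, using $\tau_n\to\infty$ from Part III of the proof of Proposition~\ref{wellposedness}.
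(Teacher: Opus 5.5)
Your proposal is correct and is essentially the paper's proof: It\^o's formula for the exponentially weighted squared norm, the dissipativity bound \eqref{eqn:prop} with the negative rate from Assumption~\ref{AS:relation1}, localization by a hitting time so the stopped stochastic integral has mean zero, and Fatou's lemma. Computing with $|Y_t|=|X^{(r)}_t|$ instead of along \eqref{eqn:DLRA} is only cosmetic, since the paper relies on the same two facts ($(I_d-P_{U_t})P_{Y_t}b$ is orthogonal to $X^{(r)}_t$, and $\|P_{U_t}\sigma\|_{\mathrm{HS}}\le\|\sigma\|_{\mathrm{HS}}$); note also that your final bound $e^{-\lambda t}f(0)+\frac{K}{\lambda}(1-e^{-\lambda t})$ is correctly signed, whereas the paper's intermediate display writes $\int_0^t e^{-\rho s}\,\mathrm{d}s$ as $\frac{1}{\rho}(e^{-\rho t}-1)$, which is negative when $\rho<0$.
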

	\begin{proof}
		Since $X^{(r)}_t$ is a stochastic process on the $\rrm{span}\{u_{t,1},u_{t,2},\cdots, u_{t,r}\}$ and $P_{U_t}$ is the orthogonal matrix, for any function $f: [0,\infty) \times \bb R^d \to \bb R^d$, we have  
		\begin{equation}\label{eqn:projU}
			\begin{aligned}
				\big\langle X^{(r)}_t,P_{U_t}f(t,X^{(r)}_t) \big\rangle=\big\langle P_{U_t}X^{(r)}_t,f(t,X^{(r)}_t) \big\rangle=\big\langle X^{(r)}_t,f(t,X^{(r)}_t) \big\rangle.
			\end{aligned}
		\end{equation}
		Let $\rho:=2L_{b,3}+2L_{\sigma,1}+1<0$. 
        Applying It\^o's formula together with Eqns.~\eqref{eqn:prop} and \eqref{eqn:projU} to the DO approximation \eqref{eqn:DLRA} yields
		\begin{align*}
			\rrm d\rrm e^{-\rho t}|X^{(r)}_t|^2&\le -\rho\rrm e^{-\rho t}|X^{(r)}_t|^2\,\rrm dt+2\rrm e^{-\rho t}\big\langle X^{(r)}_t,\big((I_{d}-P_{U_t})\big(P_{Y_t}b(t,X^{(r)}_t)\big)+P_{U_t}b(t,X^{(r)}_t)\big) \big\rangle\,\rrm dt\\
			&\qquad+\rrm e^{-\rho t}\|P_{U_t}\sigma(t,X^{(r)}_t)\|^2_\rrm{HS} \,\rrm dt+\rrm dM_t\\&\le \rrm e^{-\rho t}(L_{b,2}^2+2L_{\sigma,2})\,\rrm dt+\rrm dM_t,
		\end{align*}
		where $M_t=\int_0^t\rrm e^{-\rho s}\big\langle X_s^{(r)},P_{U_s}\sigma(s,X_s^{(r)})\,\rrm d B_s
        \big\rangle$. For any $N\in\bb N$, define $\varrho_N=\inf\{t\ge0:|X^{(r)}_t|>N\}$. Since $|X^{(r)}_{t\wedge\varrho_N}|<N$ bounds the integrand, $\bb E[M_{t\wedge\varrho_N}]=0$; therefore, we have 
		\begin{align*}
			\bb E\big[\rrm e^{-\rho (t\wedge\varrho_N)}|X^{(r)}_{t\wedge\varrho_N}|^2\big]\le \bb E[|X_0^{(r)}|^2]+\frac{L_{b,2}^2+2L_{\sigma,2}}{\rho}(\rrm e^{-\rho t}-1).
		\end{align*}
        By Fatou's lemma, we have 
        \begin{equation}
            \rrm e^{-\rho t}\bb E\big[|X^{(r)}_{t}|^2\big]=\liminf_{N\to\infty}\bb E\big[\rrm e^{-\rho (t\wedge\varrho_N)}|X^{(r)}_{t\wedge\varrho_N}|^2\big]\le \bb E[|X_0^{(r)}|^2]+\frac{L_{b,2}^2+2L_{\sigma,2}}{\rho}(\rrm e^{-\rho t}-1).
        \end{equation}
		Thus 
		\begin{equation}\label{eq:bound}
			\bb E[|X^{(r)}_t|^2]\le \Big(\frac{L_{b,2}^2+2L_{\sigma,2}}{\rho}(\rrm e^{-\rho t}-1)+\bb E[|X^{(r)}_0|^2]\Big) \rrm e^{\rho t}\le C(b,\sigma)(1+\bb E[|X^{(r)}_0|^2]).
		\end{equation}
        When $r=d$, Proposition~\ref{PropXandXDO} together with \eqref{eq:bound} yields
		 \begin{equation*}
		 	\begin{aligned}
				\bb E|X_t|^2\le 2\bb E|X_t-X^{(d)}_t|^2+2\bb E|X^{(d)}_t|^2&\le 0+ 2C(b,\sigma)(1+\bb E|X_0|^2).
		 	\end{aligned}			
		 \end{equation*}
	\end{proof}
	We now state the main theorem of this section. 
	\begin{theorem}\label{maintheorem}
		Suppose Assumptions~(H1)--(H6) hold. Moreover,
		suppose Assumption~\ref{AS:relation1} holds. Then, there exists a constant $C(\mu,\nu,b,\sigma)>0$ such that, for all $t\in[0,\infty)$,
		\begin{equation}\label{eq.theorem}
			\bb W_2^2(\mu_t,\nu_t)\le \rrm e^{\delta t}\bb W_2^2(\mu,\nu)+\frac{C(\mu,\nu,b,\sigma)}{|\delta|}(d-r).
		\end{equation}
	\end{theorem}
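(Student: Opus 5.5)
We prove the bound by a synchronous coupling. Take an optimal coupling $(X_0,X^{(r)}_0)$ of $(\mu,\nu)$, so that $\bb E|X_0-X^{(r)}_0|^2=\bb W_2^2(\mu,\nu)$. Drive the SDE \eqref{eq.sec4} and the DO system \eqref{eq:DO} (hence the DLRA \eqref{eqn:DLRA}) with the same Brownian motion $B$. Then $\bb W_2^2(\mu_t,\nu_t)\le \bb E|X_t-X^{(r)}_t|^2$, and it suffices to bound $Z_t:=X_t-X^{(r)}_t$.

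\textbf{Step 1: rewrite the DLRA drift.} Write it as $b(t,X^{(r)}_t)-R_t$, where
\[
R_t:=(I_d-P_{U_t})\big(I-P_{Y_t}\big)b(t,X^{(r)}_t),
\]
and similarly write the diffusion as $\sigma(t,X^{(r)}_t)-(I_d-P_{U_t})\sigma(t,X^{(r)}_t)$.

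\textbf{Step 2: It\^o's formula for $|Z_t|^2$.} Apply It\^o's formula and use (H6) and (H3):
\begin{align*}
\rrm d\,\bb E|Z_t|^2&\le (2L_{b,3}+2L_{\sigma,1}+\varepsilon_1)\bb E|Z_t|^2\,\rrm dt\\
&\quad+\Big(\varepsilon_1^{-1}\bb E|R_t|^2+C\,\bb E\|(I_d-P_{U_t})\sigma(t,X^{(r)}_t)\|^2_{\rrm{HS}}\Big)\rrm dt.
\end{align*}
Here Young's inequality handles the cross terms $2\bb E\langle Z_t,R_t\rangle$ and the mixed diffusion term, with localization by stopping times as in Theorem~\ref{bounded}. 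Choose $\varepsilon_1$ (and the Young constant in the diffusion cross term) so that
\[
\delta:=2L_{b,3}+2L_{\sigma,1}+1<0,
\]
which is exactly Assumption~\ref{AS:relation1}. The diffusion cross term requires care: expanding $\|\sigma(X)-\sigma(X^{(r)})+(I-P_U)\sigma(X^{(r)})\|_{\rrm{HS}}^2$ gives at most $(1+\eta)L_{\sigma,1}|Z|^2+(1+\eta^{-1})\|(I-P_U)\sigma(X^{(r)})\|^2_{\rrm{HS}}$. We absorb the $\eta$-excess into the slack "$+1$", possibly writing $\delta$ as a slightly smaller negative number.

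\textbf{Step 3: bound the residuals uniformly in time, linearly in $d-r$.} For the diffusion residual, $I_d-P_{U_t}$ is a rank-$(d-r)$ orthogonal projection. Provided $\sigma$ has bounded operator norm (otherwise via (H4) with the moment bound), this gives $\|(I_d-P_{U_t})\sigma\|^2_{\rrm{HS}}\le (d-r)\|\sigma\|^2_{\rm op}$, and Theorem~\ref{bounded} controls $\bb E|X^{(r)}_t|^2$ uniformly. For the drift residual, Lemma~\ref{le.dim} shows that $I-P_{Y_t}$ has $L^2$-norm at most $1$. One then estimates
\[
\bb E|R_t|^2\le \bb E|(I_d-P_{U_t})b(t,X^{(r)}_t)|^2 .
\]
Bounding this by $(d-r)$ times a constant is the main obstacle. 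A crude bound $\bb E|b|^2\le 2L_{b,2}^2(1+\bb E|X^{(r)}|^2)$ gives a uniform constant but not the factor $d-r$. I would proceed coordinatewise: choose an orthonormal basis $e_{r+1},\dots,e_d$ of the complement of the row space of $U_t$, so that $|(I-P_{U_t})v|^2=\sum_{j>r}\langle e_j,v\rangle^2\le (d-r)\max_j\langle e_j,v\rangle^2$. Each component is then bounded by $L_{b,2}^2(1+|X^{(r)}|)^2$ in the sense of a componentwise growth bound, interpreting the constant in \eqref{eq.theorem} as absorbing a per-direction growth constant. If that is unavailable, one instead uses $d-r\ge1$ whenever $r<d$, so any uniform constant $C$ is bounded by $C(d-r)$; for $r=d$ both residuals vanish identically, consistent with Proposition~\ref{PropXandXDO}. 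Together with Theorem~\ref{bounded}, both residuals are bounded by $C(\mu,\nu,b,\sigma)(d-r)$ for all $t$, with $C$ depending on the second moments of $\mu$ and $\nu$.

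\textbf{Step 4: close with Gr\"onwall.} Gr\"onwall's inequality then yields
\[
\bb E|Z_t|^2\le \rrm e^{\delta t}\bb E|Z_0|^2+C(d-r)\int_0^t\rrm e^{\delta(t-s)}\,\rrm ds\le \rrm e^{\delta t}\bb W_2^2(\mu,\nu)+\frac{C}{|\delta|}(d-r),
\]
which is \eqref{eq.theorem}.
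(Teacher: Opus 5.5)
Your argument is correct and follows the paper's skeleton: synchronous coupling from an optimal initial coupling, It\^o's formula for $|X_t-X^{(r)}_t|^2$, (H6)/(H3) for the Lipschitz part, and $\|(I_d-P_{U_t})\sigma\|_{\rrm{HS}}^2\le (d-r)L_{\sigma,2}(1+|X^{(r)}_t|^2)$ together with Theorem~\ref{bounded} for the diffusion residual. Where you differ is the drift cross term.

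The paper never applies Young's inequality against $|Z_t|^2$. Since $(I_d-P_{U_t})X^{(r)}_t=0$, it rewrites $2\bb E\langle Z_t,R_t\rangle=2\bb E\langle (I_d-P_{U_t})X_t,(I-P_{Y_t})b(t,X^{(r)}_t)\rangle$. It then bounds this by $\|I_d-P_{U_t}\|_{\rrm{HS}}\big(\bb E|(I-P_{Y_t})b(t,X^{(r)}_t)|^2+\bb E|X_t|^2\big)\le C\sqrt{d-r}$, using the uniform moment bounds of Theorem~\ref{bounded} for both $X$ and $X^{(r)}$. No extra $|Z_t|^2$ term appears, so the paper keeps the full rate $\delta=2(L_{b,3}+L_{\sigma,1})$; this is the value used later in the corollary comparing invariant measures.

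Your Young step uses up the ``$+1$'' slack of Assumption~\ref{AS:relation1} and yields only $\delta=2L_{b,3}+2L_{\sigma,1}+1$. The statement does not fix $\delta$, so your proof still establishes it, just with a weaker exponent. In exchange, you need the uniform moment bound only for $X^{(r)}$.

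The ``main obstacle'' in your Step~3 is not a real one. The paper's own factor $d-r$ in the drift part comes only from $\|I_d-P_{U_t}\|_{\rrm{HS}}=\sqrt{d-r}\le d-r$. That is the same device as your fallback: $d-r\ge1$ for $r<d$, and the residuals vanish for $r=d$. So the fallback is the right argument. Drop the remark about a componentwise or per-direction growth constant: (H2) supplies no such thing, and none is needed.
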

	\begin{proof}
		When $r<d$, consider the following system of SDE,
		\begin{align*}
			\rrm d X_t&=b(t,X_t)\,\rrm dt+\sigma(t,X_t)\,\rrm d B_t;\\
			\rrm d X^{(r)}_t&=\big((I_{d}-P_{U_t})\big(P_{Y_t}b(t,X^{(r)}_t)\big)+P_{U_t}b(t,X^{(r)}_t)\big)\,\rrm dt+P_{U_t}\sigma(t,X^{(r)}_t)\,\rrm d B_t.
		\end{align*}
		Let $I_{t,1}$ and $I_{t,2}$ be defined by
		\begin{align}
			I_{t,1}&=2\big\langle X_t-X^{(r)}_t,b(t,X_t)-b(t,X^{(r)}_t)\big\rangle+2\big\|\sigma(t,X_t)-\sigma(t,X^{(r)}_t)\big\|^2_\rrm{HS},\\
			I_{t,2}&=2\big\langle X_t-X^{(r)}_t,b(t,X^{(r)}_t)-\big((I_{d}-P_{U_t})\big(P_{Y_t}b(t,X^{(r)}_t)\big)+P_{U_t}b(t,X^{(r)}_t)\big)\big\rangle\nonumber\\&\quad+2\big\|\sigma(t,X^{(r)}_t)-P_{U_t}\sigma(t,X^{(r)}_t)\big\|^2_\rrm{HS}.
		\end{align}
        By applying It\^o's formula, we obtain
		\begin{align*}
			\rrm d \rrm e^{-\delta t}|X_t-X^{(r)}_t|^2&=2\rrm e^{-\delta t}\big\langle X_t-X^{(r)}_t,b(t,X_t)-\big((I_{d}-P_{U_t})\big(P_{Y_t}b(t,X^{(r)}_t)\big)+P_{U_t}b(t,X^{(r)}_t)\big)\big\rangle \,\rrm dt\\&\quad-\delta\rrm e^{-\delta t}|X_t-X^{(r)}_t|^2\,\rrm dt+\|\sigma(t,X_t)-P_{U_t}\sigma(t,X^{(r)}_t)\|^2_\rrm{HS}\,\rrm dt+\rrm dM_t\\&=-\delta\rrm e^{-\delta t}|X_t-X^{(r)}_t|^2\,\rrm dt+\rrm e^{-\delta t}I_{t,1}\,\rrm dt+\rrm e^{-\delta t}I_{t,2}\,\rrm dt+\rrm dM_t,
		\end{align*}
		where $M_t=2\int_0^t\rrm e^{-\delta s}\big\langle X_s-X_s^{(r)},\sigma(s,X_s)-P_{U_s}\sigma(s,X_s^{(r)})\,\rrm d B_s\big\rangle$. For any $N\in\bb N$, let $\bar\tau_N=\inf\{t\ge0:|X_t|\vee|X_t^{(r)}|>N\}$. Since $|X_{t\wedge\bar\tau_N}|,\,|X^{(r)}_{t\wedge\bar\tau_N}|\le N$ and (H4) holds, $\bb E[M_{t\wedge\bar\tau_N}]=0$; therefore, we have
		\begin{equation}\label{eqn:x-xr}
        \begin{aligned}
          \bb E[\rrm e^{-\delta (t\wedge\bar\tau_N)}|X_{t\wedge\bar\tau_N}-X^{(r)}_{t\wedge\bar\tau_N}|^2]&\le\bb E[|X_{0}-X^{(r)}_{0}|^2]+\int_0^t\big(\rrm e^{-\delta s}\bb E[I_{s,1}]-\delta\rrm e^{-\delta s}\bb E[|X_s-X^{(r)}_s|^2]\big)\,\rrm ds\\&\quad+\int_0^t\rrm e^{-\delta s}\bb E[I_{s,2}]\,\rrm ds.  
        \end{aligned}
		\end{equation}
        Next we will estimate $\bb EI_{s,1},\bb EI_{s,2}$. Based on (H3) and (H6), we have that
		\begin{equation}\label{eq.I1}
			\bb E[I_{s,1}]\le 2\big(L_{b,3}+L_{\sigma,1}\big)\bb E[|X_s-X^{(r)}_s|^2].
		\end{equation}
		For the term $I_{s,2}$, we also have 
		\begin{align*}
			\bb E[I_{s,2}]&=2\bb E\big\langle X_s-X^{(r)}_s,b(s,X^{(r)}_s)-\big((I_{d}-P_{U_s})\big(P_{Y_s}b(t,X^{(r)}_s)\big)+P_{U_s}b(s,X^{(r)}_s)\big)\big\rangle\\&\quad+\bb E[\|\sigma(s,X^{(r)}_s)-P_{U_s}\sigma(s,X^{(r)}_s)\|^2_\rrm{HS}]\\&\le 
			2\bb E\big\langle\big(I_{d}-P_{U_s}\big) \big(X_s-X^{(r)}_s\big),b(s,X^{(r)}_s)-P_{Y_s}b(s,X^{(r)}_s)\big\rangle\\&\quad+2\|I_{d}-P_{U_s}\|_\rrm{HS}^2\bb E[\|\sigma(s,X^{(r)}_s)\|^2_\rrm{HS}].
		\end{align*}
		By applying Young’s inequality and using the bound in Eqn.~\eqref{eq,DObound}, together with the estimate from (H2), Lemma~\ref{le.dim} and Theorem~\ref{bounded}, we obtain
		\begin{align*}
			&2\bb E\big\langle\big(I_{d}-P_{U_s}\big) \big(X_s-X^{(r)}_s\big),b(t,X^{(r)}_s)-P_{Y_s}b(t,X^{(r)}_s)\big\rangle,\\&\le 2\bb E\big\langle\big(I_{d}-P_{U_s}\big)X_s,b(s,X^{(r)}_s)-P_{Y_s}b(t,X^{(r)}_s)\big\rangle,\\&\le
			2\bb E\big\langle\big(I_{d}-P_{U_s}\big),\big(b(s,X^{(r)}_s)-P_{Y_s}b(s,X^{(r)}_s)\big)X_s^\top\big\rangle_\rrm{HS},
			\\&\le 2\|I_d-P_{U_s}\|_\rrm{HS}\bb E\big[|\big(b(s,X^{(r)}_s)-P_{Y_s}b(s,X^{(r)}_s)\big)||X_s|\big],\\&\le \|I_d-P_{U_s}\|_\rrm{HS}\big(\bb E[|b(s,X^{(r)}_s)-P_{Y_s}b(s,X^{(r)}_s)|^2]+\bb E[|X_s|^2]\big),\\&\le 
            C(\mu,\nu,b,\sigma)\sqrt{d-r}.
		\end{align*}
		where $C(\mu,\nu,b,\sigma)$ is a constant independent of time $s$.
        
		  According to the property of projection $P_{U_s}$, (H4) and Theorem~\ref{bounded}, we have 
		  \begin{align*}
		 	\|I_{d}-P_{U_s}\|_\rrm{HS}^2\bb E[\|\sigma(s,X^{(r)}_s)\|^2_\rrm{HS}]\le (d-r)L_{\sigma,2}\big(1+\bb E[|X^{(r)}_s|^2]\big)
		 	\le C(\mu,\nu,b,\sigma)(d-r).
		 \end{align*}		
	Then we obtain that 
		\begin{equation}\label{eq.I2}
			\begin{aligned}
				\bb E[I_{t,2}]&\le C(\mu,\nu,b,\sigma)(d-r).
			\end{aligned}
		\end{equation}
		Combining Eqns.~\eqref{eqn:x-xr}, \eqref{eq.I1} and \eqref{eq.I2} yields
		\begin{align*}
			\bb E[\rrm e^{-\delta (t\wedge\bar\tau_N)}|X_{t\wedge\bar\tau_N}-X^{(r)}_{t\wedge\bar\tau_N}|^2]&\le\bb E[|X_{0}-X^{(r)}_{0}|^2]+\frac{C(\mu,\nu,b,\sigma)}{|\delta|}(1-\rrm e^{-\delta t})(d-r).
		\end{align*}
		Finally, from the definition of the Wasserstein distance, it follows that
		\begin{align*}
			\bb W_2^2(\mu_t,\nu_t)\le\bb E[|X_t-X^{(r)}_t|^2]\le \rrm e^{\delta t}\bb E[|X_0-X_0^{(r)}|^2]+\frac{ C(\mu,\nu,b,\sigma)}{|\delta|}(d-r).
		\end{align*}      
		Taking infimum with respect to $X_0,X_0^{(r)}$ satisfying $\scr L_{X_0}=\mu,\ \scr L_{X^{(r)}_0}=\nu$, we have 
		\begin{equation}\label{eq.d<r}
			\bb W_2^2(\mu_t,\nu_t)\le \rrm e^{\delta t}\bb W_2^2(\mu,\nu)+\frac{ C(\mu,\nu,b,\sigma)}{|\delta|}(d-r).
		\end{equation}
        When $d=r$, Proposition~\ref{PropXandXDO} reduces the analysis to the following system of SDEs
        \begin{equation}\label{eqn:sym}
            \begin{aligned}
			\rrm d X_t&=b(t,X_t)\,\rrm dt+\sigma(t,X_t)\,\rrm d B_t;\\
			\rrm d X^{(r)}_t&=b(t,X^{(r)}_t)\,\rrm dt+\sigma(t,X^{(r)}_t)\,\rrm d B_t.
		\end{aligned}
        \end{equation}
        Applying the same argument as before, we obtain 
        \begin{equation}\label{eq.d=r}
            \bb W_2^2(\mu_t,\nu_t)\le \rrm e^{\delta t}\bb W_2^2(\mu,\nu).
        \end{equation}
        Combining Eqn.~\eqref{eq.d<r} and Eqn.~\eqref{eq.d=r} yields the desired result.
	\end{proof}	

	\subsection{Sharpness of Theorem~\ref{maintheorem}}\label{ssec:sharpness}
	In Theorem~\ref{maintheorem}, the upper bound in Eqn.~\eqref{eq.theorem} has a constant term that depend on $(d-r)$, and thus if $d> r$ then this bound is positive. 
	It turns out that this term is not improvable in general, as the following example shows.
	
	\begin{example}\label{exam:sharp}
		Consider the following SDE in $\bb R^d$,
		\begin{equation}\label{ex.optimal}
			\rrm dX_t=-X_t\,\rrm dt+\rrm d W_t,
		\end{equation}
		where the drift and diffusion coefficients are given by $b(t,x)=-x$ and $\sigma(t,x)=1$, respectively. Here $(W_t)_{t\ge0}$ denotes a $d$-dimensional standard Brownian motion. Let $\scr L_{X_0}=\al N(0,I_d)$ and $\scr L_{X_0^{(r)}}=\al N(0,A)$, where $$A=\rrm {diag}(\underbrace{1,1,\cdots,1}_r,0,\cdots,0).$$ Then, for all $t\ge0$, it holds that $$\bb W_2^2\big(\scr L_{X^{(r)}_t},\scr L_{X_t}\big)= \rrm e^{-2t}\bb W^2_2\big(\scr L_{X_0},\scr L_{X_0^{(r)}}\big)+\frac{1-\rrm e^{-2t}}{2}(d-r).$$
	\end{example}
	It is well known that the solution of Eqn.~\eqref{exam:sharp} is 
	\begin{equation}
		X_t=\rrm e^{-t}X_0+\int_{0}^{t}\rrm e^{-(t-s)}\,\rrm dW_s,
	\end{equation}
	where $X_0$ denotes the initial condition. Consequently, the law of $X_t$ is given by
	\begin{equation}\label{eq.disofSDE}
		\scr L_{X_t}=\al N\big(\rrm e^{-t}\bb E[X_0],C_t\big),
	\end{equation}
	where $$C_t=\rrm e^{-2t}C_{X_0}+\frac{1-\rrm e^{-2t}}{2}I_{d},$$ and $C_{X_0}$ is the covariance matrix of $X_0$.
	
From Eqn.~\eqref{eq:DO}, the strong DO solution of Eqn.~\eqref{ex.optimal} satisfies
	\begin{equation}\label{ex.DO}
		\begin{aligned}
			\rrm d U_t&=0\,\rrm dt,\\
			\rrm d Y_t&=-Y_t\,\rrm dt+U_t\,\rrm dW_t,
		\end{aligned}
	\end{equation}
	with initial condition $(U_0,Y_0)$. Assume that there exists a constant $\sigma_{Y_0}>0$ such that $\bb E[Y_0Y^\top_0]\succeq\sigma_{Y_0}I_{r}$. Then, by \cite[Proposition 4.5]{DLRASDE}, the covariance matrix $C_{Y_t}$ is invertible for all $t\ge0$. The system \eqref{ex.DO} admits the unique solution
	\begin{equation}
		\begin{aligned}
			U_t&=U_0,\\
			Y_t&=\rrm e^{-t}Y_0+\int_{0}^{t}\rrm e^{-(t-s)}U_0\,\rrm dW_s.
		\end{aligned}
	\end{equation}
	The $r$-dimensional DO approximation of Eqn.~\eqref{ex.optimal} is 
	\begin{equation}
		X_t^{(r)}=U^\top_tY_t=\rrm e^{-t}X_0^{(r)}+\int_{0}^{t}\rrm e^{-(t-s)}U^\top_0U_0\,\rrm dW_s.
	\end{equation}
		Similarly, the distribution of $X_t^{(r)}$ is 
	\begin{equation}
		\scr L_{X^{(r)}_t}=\al N\big(\rrm e^{-t}\bb E[X^{(r)}_0],C^{(r)}_t\big),
	\end{equation}
	where $$C^{(r)}_t=\rrm e^{-2t}C_{X^{(r)}_0}+\frac{1-\rrm e^{-2t}}{2}U_0^\top U_0,$$ and  $C_{X^{(r)}_0}$ denotes the covariance matrix of $X^{(r)}_0$. 
	
	Next, assume that $X_0$ is normally distributed with mean $0$ and covariance matrix $I_{d}$. Likewise, assume that $Y_0$ is normally distributed with mean $0$ and covariance matrix $I_{r}$.
	By~\cite[Proposition 7]{normaldis}, the squared $2$‑Wasserstein distance between the laws of $X_t$ and $X_t^{(r)}$ is given by
	\begin{align*}
		\bb W^2_2\big(\scr L_{X^{(r)}_t},\scr L_{X_t}\big)&=\|\rrm e^{-t}\bb E[X_0]-\rrm e^{-t}\bb E[X^{(r)}_0]\|^2+\rrm{Tr}(C_t)+\rrm {Tr}(C^{(r)}_t)-2\rrm{Tr}\big[\big(C_t^{1/2}C^{(r)}_tC_t^{1/2}\big)^{1/2}\big].
	\end{align*}
	Note that 
	\begin{align*}
		C^{1/2}_t=\sqrt{\frac{1+\rrm e^{-2t}}{2}}I_{d}.
	\end{align*}
	Since $$C_{X_0^{(r)}}=\bb E[X_0^{(r)}X_0^{(r)\top}]=U_0^\top\bb E[Y_0Y_0^{\top}]U_0,$$
	the covariance matrix $C_t^{(r)}$ admits the spectral representation
	\begin{align*}
		C^{(r)}_t=\frac{1+\rrm e^{-2t}}{2}Q\rrm {diag}(\underbrace{1,1,\cdots,1}_r,0,\cdots,0)Q^\top,
	\end{align*}
	for some orthogonal matrix $Q\in\bb R^{d\times d}$.
	
	Therefore, 
	\begin{align*}
		\rrm{Tr}\big[\big(C_t^{1/2}C^{(r)}_tC_t^{1/2}\big)^{1/2}\big]=\frac{1+\rrm e^{-2t}}{2} r.
	\end{align*}
	We also have 
	\begin{align*}
		\rrm{Tr}\big[\big(C_{X_0}^{1/2}C_{X^{(r)}_0}C_{X_0}^{1/2}\big)^{1/2}\big]= r.
	\end{align*}
	Combining these identities yields
	\begin{align*}
		\bb W^2_2\big(\scr L_{X^{(r)}_t},\scr L_{X_t}\big)&=\|\rrm e^{-t}\bb E[X_0]-\rrm e^{-t}\bb E[X^{(r)}_0]\|^2+\rrm{Tr}(C_t)+\rrm {Tr}(C^{(r)}_t)-2\rrm{Tr}\big[\big(C_t^{1/2}C^{(r)}_tC_t^{1/2}\big)^{1/2}\big]\\&
		=\rrm e^{-2t}\|\bb E[X_0]-\bb E[X^{(r)}_0]\|^2+\rrm e^{-2t}\rrm{Tr}\big(C_{X_0}\big)+\rrm e^{-2t}\rrm{Tr}\big(C_{X^{(r)}_0}\big)-2\rrm e^{-2t}\rrm{Tr}\big[\big(C_{X_0}^{1/2}C_{X^{(r)}_0}C_{X_0}^{1/2}\big)^{1/2}\big]\\&\quad + \frac{1-\rrm e^{-2t}}{2}d+\frac{1-\rrm e^{-2t}}{2} r-(1+\rrm e^{-2t})r+2\rrm e^{-2t}r\\&=\rrm e^{-2t}\bb W^2_2\big(\scr L_{X_0},\scr L_{X_0^{(r)}}\big)+\frac{1-\rrm e^{-2t}}{2}(d-r).
	\end{align*}
	Thus, this construction provides an explicit example illustrating that the estimate in Theorem ~\ref{maintheorem} is optimal. By taking $b=-I_d$ and $\sigma=1$, we obtain $L_{b,3}=-1$, $L_{\sigma,2}=0$, which satisfy the assumptions of Theorem~\ref{maintheorem}. In addition, $1-\rrm e^{-2t}\le 1$, with the constant $C=1$ in this example.

	\section{Invariant probability measure}\label{sec:invariant}		
	In this section, we study the invariant probability measure of the pair $(U,Y)$. 
To ensure the existence of an invariant measure, we restrict ourselves to time‑independent coefficients in SDE~\eqref{eq.sec4} and impose suitable assumptions. 
In the classical DO decomposition, the initial value $U_0$ is usually taken as a deterministic matrix in $V_r(\bb R^d)$. However, to study invariant measures we need to consider random initial conditions; this is motivated by Example~\ref{exam:notdirac} presented later in this section.

We define the space $V_r(\bb R^d)$ by
$$
V_r(\bb R^d) = \{ A \in \bb R^{r \times d} : AA^\top = I_r \}.
$$
Note that, by \cite[Chapter 1]{ash2000probability}, it holds that
    \begin{equation*}
        \scr B(V_r(\bb R^d))=\{B \cap V_r(\bb R^d): B\in\scr B(\bb R^{r\times d})\}.
    \end{equation*}   
Recall that $\scr P(V_r(\bb R^d))$ is the set of all probability measures on the measurable space $(V_r(\bb R^d),\scr B(V_r(\bb R^d)))$.
It is easy to see that for any $\varpi\in \scr P(V_r(\bb R^d))$ and any $p>0$, we have 
    \begin{equation}\label{eq:Vr}
        \int_{V_r(\bb R^d)}\|x\|_\rrm{HS}^p\,\varpi(\rrm dx)=r^\frac{p}{2}\varpi(V_r(\bb R^d))=r^\frac{p}{2}<\infty.
    \end{equation}
    
Since the DO solution $(U,Y)$ given by Eqn.~\eqref{eq:DO} takes values in $V_r(\bb R^d) \times \bb R^r$, we consider the product space equipped with the norm
\begin{equation}\label{eqn:norm}
    \|(v,y)\| := \bigl( \|v\|_{\rrm{HS}}^2 + |y|^2 \bigr)^{1/2}, \qquad (v,y) \in V_r(\bb R^d)\times \bb R^r.
\end{equation}
When the system~\eqref{eq:DO} is well‑posed, we denote by $\al P^*_t \pi := \scr L_{(U_t,Y_t)}$ the distribution of the solution with initial distribution $\pi \in \scr P(V_r(\bb R^d) \times \bb R^r)$. Then, for every bounded measurable function
$g:V_r(\mathbb R^d)\times \mathbb R^r\to \mathbb R$, we have
$$
\int_{V_r(\mathbb R^d)\times \mathbb R^r}
g(u,y)\,\al P_t^*\pi(\rrm du,\rrm dy)
=
\mathbb E\big[g(U_t^{\pi},Y_t^{\pi})\big]$$
where $\scr L_{(U_0,Y_0)}=\pi.$

For fixed $\kappa>0$, define the set $\scr P_2^{\kappa}(\bb R^r)$ by 
    \begin{equation}\label{eqn:P_2k}
          \scr P_2^{\kappa}(\bb R^r)=\Big\{\mu\in \scr P_2(\bb R^r): C_\mu\succeq \kappa I_{r\times r}\Big\},
      \end{equation}
      where $C_\mu=\int_{\bb R^r}xx^\top\mu(\rrm dx)$. 

	An invariant measure for the process $(U,Y)$ is defined as follows.
	\begin{definition}\label{def:inv}
		A probability measure $\pi^*$ on $V_r(\bb R^d)\times \bb R^r$ is called invariant probability measure of a process $(U_t,Y_t)_{t\ge 0}$, if $\al P^*_t \pi^* = \pi^*$ for all $t > 0$. Equivalently, for every bounded measurable functions $f:V_r(\bb R^d)\times \bb R^r\to\bb R$ satisfies 
			$$\pi^*(f)=  \pi^*\big( f(U_t,Y_t)\big) ,\quad \forall\,  t> 0,$$
        where $$\pi^*(f):=\int_{V_r(\bb R^d)\times \bb R^r} f(u,y)\,\pi^*(\rrm du,\rrm dy)$$ and $$\pi^*\big( f(U_t,Y_t)\big):=\bb E[f(U_t,Y_t)],$$ with $\scr L_{(U_0,Y_0)}=\pi^*$.
	\end{definition}  
    In discussing the invariant distribution of $(U,Y)$, one might expect that if $X$ 
		admits an invariant distribution, then the corresponding pair $(U,Y)$ admits a 
		Dirac measure for $U$ and an invariant measure for $Y$. We will show later by example 
		that this is not the case. Intuitively, since the DO decomposition of a DLRA $X^{(r)}$ is unique only up to rotations, this non-uniqueness must be accounted for. 
        
   To obtain an invariant probability measure for Eqn.~\eqref{eq:DO}, we introduce the following assumption.
	\begin{assumption}\label{as.bo4}
		The coefficients $b$ and $\sigma$ are measurable and time-independent. Moreover, the following conditions hold
		\begin{itemize}
			\item[(H7)]  There exists $L_{b,6}\ge0$ such that for all $x,y\in \bb R^d$, one has 
			\begin{align*}
				|b(x)-b(y)|&\le L_{b,6}|x-y|.
			\end{align*}
			\item[(H8)] There exists $L_{b,7}>0$ and $L_{b,8}\ge0$ such that for all $x\in\bb R^d$, one has 
			\begin{align*}
				\left\langle x,b(x) \right\rangle &\le -L_{b,7}|x|^2+L_{b,8}.
			\end{align*}
		\end{itemize}
	\end{assumption}
    \begin{remark}
        Under time‑independent coefficients, (H3) and (H7) imply (H2) and (H4) because global Lipschitz implies linear growth.  Therefore, the assumptions in this section are stronger than those in Section~\ref{sec:uniform}, except for (H6) in Section~\ref{sec:uniform}, because (H6) is stronger than (H8).
    \end{remark}
    \begin{assumption}\label{AS:relation2}
        For some $p> 2$, the constants $L_{\sigma,1}$ and $L_{b,7}$ (appearing in (H3) and (H8), respectively) satisfy $2L_{b,7}-2(p-1)L_{\sigma,1}>1$.
    \end{assumption}
    Next, we give the main result of this section, which ensures the existence of an invariant measure with a non‑degenerate second marginal.
	\begin{theorem}\label{inv}
		Suppose Assumptions  (H3), (H5), (H7) and (H8) hold for SDE~\eqref{eq.sec4}. Moreover, for some $p> 2$, suppose Assumption~\ref{AS:relation2} holds. Then, the strong DO solution~\eqref{eq:DO} to Eqn.~\eqref{eq.sec4} admits an invariant probability measure $\pi^*\in\scr P_2(V_r(\bb R^d)\times\bb R^r)$. Moreover, the second marginal $$\mu^*(A):=\int_{V_r(\bb R^d)}\pi^*(\rrm du, A), \quad\text{for any }A\in \scr B(\bb R^r),$$
        satisfies $\rrm{det}(C_{\mu^*}) \neq 0$, where $C_{\mu^*}=\int_{\bb R^r}xx^\top \, \mu^*(\rrm dx).$
	\end{theorem}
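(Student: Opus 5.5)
The abstract already indicates the strategy: uniform moment bounds, then Krylov--Bogoliubov for a frozen system, then a Kakutani--Fan--Glicksberg fixed point. The plan below makes this concrete.

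\textbf{Step 1: decouple the law-dependence.} The only law-dependent objects in \eqref{eq:DO} are $C_{Y_t}$ and $\bb E[Y_tb(U_t^\top Y_t)^\top]$. Both depend on $\scr L_{(U_t,Y_t)}$, and, since $U_0$ is random, on the joint law. Fix $\kappa>0$ and a target law $\pi\in\scr P_2(V_r(\bb R^d)\times\bb R^r)$ whose second marginal lies in $\scr P_2^\kappa(\bb R^r)$. Define the \emph{frozen} system:
\begin{itemize}
\item $\rrm dY_t=U_tb(U_t^\top Y_t)\,\rrm dt+U_t\sigma(U_t^\top Y_t)\,\rrm dB_t$;
\item $\rrm dU_t=C_\pi^{-1}M_\pi(U_t)(I_d-P_{U_t})\,\rrm dt$, where $M_\pi$ and $C_\pi$ are computed from $\pi$.
\end{itemize}
This is an ordinary time-homogeneous Markov diffusion on $V_r(\bb R^d)\times\bb R^r$. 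By (H3), (H7) and $C_\pi\succeq\kappa I_r$ it has Lipschitz coefficients. Note that $(I_d-P_U)$ keeps $U$ on the Stiefel manifold, as in \eqref{eq:U}.

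\textbf{Step 2: uniform moments and nondegeneracy.} Apply It\^o's formula to $|Y_t|^p$. Because $U_t$ is orthonormal, $\langle Y,Ub(U^\top Y)\rangle=\langle U^\top Y,b(U^\top Y)\rangle$, so (H8) applies directly. The It\^o correction is bounded by $\tfrac{p(p-1)}{2}|Y|^{p-2}\|\sigma\|_{\rm HS}^2$, and (H3) gives a bound on the $\sigma$-term with coefficient $L_{\sigma,1}$. Assumption~\ref{AS:relation2} then makes the drift of $\bb E|Y_t|^p$ negative for large $|Y|$. This yields $\sup_t\bb E|Y_t|^p\le C(1+\bb E|Y_0|^p)$, with $C$ independent of $\pi$.

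The $p>2$ moment gives uniform integrability of $|Y|^2$. This is needed so that second moments, and hence covariances, pass to limits.

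For nondegeneracy, (H5) gives $U\sigma\sigma^\top U^\top\succeq L_{\sigma,3}I_r$. Run the argument of \cite[Proposition 4.5]{DLRASDE} on $\rrm d\,\bb E[Y_tY_t^\top]$. The drift contribution is bounded below by $-c\,C_{Y_t}$ (Lipschitz) and the noise contributes at least $L_{\sigma,3}I_r$. Hence $C_{Y_t}\succeq\kappa_0 I_r$ for all $t\ge0$ once $C_{Y_0}\succeq\kappa_0I_r$, with $\kappa_0=\kappa_0(L_{b,6},L_{\sigma,3})$ independent of $\pi$. Choose $\kappa=\kappa_0$.

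\textbf{Step 3: Krylov--Bogoliubov for the frozen system.} Start the frozen system from a fixed initial law with moments and covariance $\succeq\kappa_0 I_r$. Form the time averages $\frac1T\int_0^T\scr L_{(U_t,Y_t)}\,\rrm dt$. These are tight, since $V_r$ is compact and $Y$ has bounded moments. The frozen system has Lipschitz coefficients, so it is Feller, and any weak limit point is invariant for it.

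Let $K$ be the convex, weakly compact set of laws with $p$-th moment of the $Y$-marginal bounded by $R$ and covariance $\succeq\kappa_0 I_r$. For $\pi\in K$, let $\Phi(\pi)$ be the set of invariant measures of the frozen system lying in $K$. By Step 2 with suitable $R$, $\Phi(\pi)\neq\emptyset$. Convexity of $\Phi(\pi)$ is immediate, since invariance is linear and the covariance and moment constraints are convex.

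\textbf{Step 4: closed graph and fixed point.} This is the main obstacle. Take $\pi_n\to\pi$ and $\eta_n\in\Phi(\pi_n)$ with $\eta_n\to\eta$; we must show $\eta\in\Phi(\pi)$.
\begin{itemize}
\item Uniform $p$-integrability gives $C_{\pi_n}\to C_\pi$ and $M_{\pi_n}\to M_\pi$.
\item The bound $\succeq\kappa_0$ keeps $C_{\pi_n}^{-1}$ uniformly bounded.
\item So the frozen coefficients converge uniformly on compacts, and a Gr\"onwall estimate gives $P^{\pi_n}_tf\to P^\pi_tf$ locally uniformly for $f\in C_b$.
\item Together with tightness, passing to the limit in $\eta_n(P_t^{\pi_n}f)=\eta_n(f)$ gives invariance of $\eta$.
\end{itemize}
Kakutani--Fan--Glicksberg, applied in the locally convex space of signed measures with the weak topology, then gives $\pi^*\in\Phi(\pi^*)$.

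For such a fixed point, the frozen dynamics started from $\pi^*$ has law $\pi^*$ at every time. Its coefficients therefore coincide with the true self-consistent ones, so it solves \eqref{eq:DO}. By uniqueness (Proposition~\ref{wellposedness}, extended to random $U_0$), $\al P_t^*\pi^*=\pi^*$. Finally, $\pi^*\in K$ gives $C_{\mu^*}\succeq\kappa_0I_r$, so $\det C_{\mu^*}\neq0$, and the $p$-th moment bound gives $\pi^*\in\scr P_2$.
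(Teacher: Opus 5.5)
Your plan follows the paper's route: a frozen system, uniform $p$-th moments from (H8) and (H3), a covariance floor from (H5), Krylov--Bogoliubov for the frozen dynamics, and Kakutani--Fan--Glicksberg on laws. Up to the fixed point, the differences are mild. You run the fixed point on joint laws in the weak topology of finite signed measures, whereas the paper works with $Y$-marginals in $(\scr P_2,\al W_2)$. You also build $K$ into $\Phi(\pi)$, so you need only one good Krylov--Bogoliubov limit (from a fixed nondegenerate initial law, with $R$ chosen after it). The paper's Lemma~\ref{lem,bound} instead shows that \emph{every} invariant measure of the frozen system lies in $\al K^\kappa_M$, which uses the dissipative bound $\bb E|Y_t|^p\le \rrm e^{-\lambda pt/2}\bb E|Y_0|^p+C'$ rather than your $C(1+\bb E|Y_0|^p)$.

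One correction in Step~2: the drift part of $\frac{\rrm d}{\rrm dt}v^\top C_{Y_t}v$ is not bounded below by $-c\,v^\top C_{Y_t}v$ from the Lipschitz property alone. Cauchy--Schwarz only gives $-2(v^\top C_{Y_t}v)^{1/2}(\bb E|b(U_t^\top Y_t)|^2)^{1/2}$. You need Young's inequality plus the uniform moment bound, as in Theorem~\ref{theo:inverse}, so $\kappa_0$ also depends on the moment constant; it is still uniform in $\pi$.

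The step that does not close is the last one: ``its coefficients therefore coincide with the true self-consistent ones.'' Take $M_\pi(U)=\int y\,b(U^\top y)^\top\mu(\rrm dy)$ with $\mu$ the $Y$-marginal of $\pi$. This is what your notation $M_\pi(U_t)$ suggests, and it is exactly the paper's frozen system. Stationarity from $\pi^*$ gives $C_{Y_t}=C_{\mu^*}$. However, in \eqref{eq:DO} the coefficient $\bb E[Y_tb(U_t^\top Y_t)^\top]=\int\!\!\int y\,b(u^\top y)^\top\pi^*(\rrm du,\rrm dy)$ is a single deterministic matrix. The frozen drift $\int y\,b(U_t^\top y)^\top\mu^*(\rrm dy)$ is instead evaluated at the random current $U_t$. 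For general $b$ these agree only when the $U$-marginal of $\pi^*$ is a Dirac mass.

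Reading \eqref{eq:DO} fibrewise in $U_0$ does not rescue this either. Then the conditional laws $\scr L(Y_t\mid U_t=u)$, not their mixture $\mu^*$, must enter the coefficients, and the fixed point does not control them. The case is not vacuous. The frozen $U$-equation is a deterministic ODE on $V_r(\bb R^d)$, so the $U$-marginal of a Krylov--Bogoliubov limit is an invariant measure of that flow and can be spread along a periodic orbit (compare Example~\ref{exam:notdirac}). The paper's closing sentence, that the frozen equation ``coincides'' with \eqref{eq:DO} once the law of $Y_t^{(\mu)}$ equals $\mu$, makes the same identification. So you are not behind the paper here, but as written neither argument closes this step.

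Your own remark that the coefficients depend on the joint law points to the repair: freeze the constant matrices $C_\pi$ and $M_\pi:=\int\!\!\int y\,b(u^\top y)^\top\pi(\rrm du,\rrm dy)$.
\begin{itemize}
\item The frozen $U$-drift $C_\pi^{-1}M_\pi(I_d-P_U)$ still preserves $UU^\top=I_r$ and is smooth on $V_r(\bb R^d)$.
\item $M_{\pi_n}\to M_\pi$ in your closed-graph step, because $|y\,b(u^\top y)^\top|\le C(1+|y|^2)$ and $p>2$.
\item Steps~2--4 go through unchanged.
\item At a fixed point, the frozen solution started from $\pi^*$ satisfies \eqref{eq:DO} exactly, with unconditional expectations.
\end{itemize}
What then remains is well-posedness of \eqref{eq:DO} with random $U_0$, a McKean--Vlasov system in the joint law, which you need to identify $\al P^*_t\pi^*$. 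Proposition~\ref{wellposedness} covers only deterministic $U_0$, so ``extended to random $U_0$'' requires its own Gr\"onwall argument rather than a citation.
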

The proof is postponed to the end of this Section. Define $O(r)$ to be the set of all orthogonal matrices in $\bb R^{r\times r}$. 

A probability measure $\eta\in\scr P(E)$ is called $O(r)$-invariant if, for every $Q\in O(r)$ and every $\varphi\in C_{b}(E)$, 
\[
\int_{E}\varphi(Q(x))\,\eta(\rrm dx)=\int_{E}\varphi(x)\,\eta(\rrm dx).
\]
Here the state space $E$ can be taken as $V_r(\bb R^r),\,\bb R^r$ or $V_r(\bb R^r)\times\bb R^r$
In the case $E=V_r(\bb R^r)\times\bb R^r$, the action of  $Q\in O(r)$ on any $(x,y)\in E$ is defined by $Q(x,y)=(Qx,Qy)$.

We now provide another invariant measure for the strong DO solution~\eqref{eq:DO}, which enjoys the $O(r)$-invariance property.
\begin{theorem}\label{cor:inv}
     Given any invariant probability measure $\pi^*$ of Eqn.~\eqref{eq:DO}  whose second marginal $$\mu^*(A):=\int_{V_r(\bb R^d)}\pi^*(\rrm du, A), \quad\text{for any }A\in \scr B(\bb R^r),$$
    satisfies $\rrm{det}(C_{\mu^*}) \neq 0$, there exists another invariant probability measure $\widetilde\pi^*\in\scr P_2(V_r(\bb R^d)\times\bb R^r)$ that is $O(r)$-invariant. Moreover, define $$\widetilde\mu^*(A)=\int_{V_r(\bb R^d)}\widetilde\pi^*(\rrm du, A), \quad\text{for any }A\in \scr B(\bb R^r).$$
    Then we have $\rrm{det}(C_{\widetilde\mu^*}) \neq 0$.
\end{theorem}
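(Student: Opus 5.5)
The idea is to symmetrize $\pi^*$ over the compact group $O(r)$ with respect to its Haar probability measure $\mathrm{d}Q$. Define
\[
\widetilde\pi^*(\varphi):=\int_{O(r)}\int_{V_r(\bb R^d)\times\bb R^r}\varphi(Qu,Qy)\,\pi^*(\rrm du,\rrm dy)\,\rrm dQ ,
\]
i.e. $\widetilde\pi^*=\int_{O(r)}(\al R_Q)_\#\pi^*\,\rrm dQ$, where $\al R_Q(u,y)=(Qu,Qy)$ as in Theorem~\ref{theo:rotation}. Since $(Q,u,y)\mapsto\varphi(Qu,Qy)$ is measurable and bounded, Fubini shows that this defines a probability measure on $V_r(\bb R^d)\times\bb R^r$. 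Note that $QV_r(\bb R^d)=V_r(\bb R^d)$.

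\textbf{$O(r)$-invariance and second moments.} For $Q'\in O(r)$, the change of variables $Q\mapsto Q'Q$ together with left invariance of Haar measure gives $\widetilde\pi^*(\varphi\circ\al R_{Q'})=\widetilde\pi^*(\varphi)$. Finiteness of second moments follows because $\|Qu\|_\rrm{HS}=\|u\|_\rrm{HS}$ and $|Qy|=|y|$, so $\widetilde\pi^*(\|\cdot\|^2)=\pi^*(\|\cdot\|^2)<\infty$. For the covariance, the second marginal satisfies
\[
C_{\widetilde\mu^*}=\int_{O(r)}QC_{\mu^*}Q^\top\,\rrm dQ .
\]
Since $C_{\mu^*}\succeq0$ and $\det C_{\mu^*}\neq0$, we have $C_{\mu^*}\succeq\lambda_{\min}I_r$ with $\lambda_{\min}>0$. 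Each $QC_{\mu^*}Q^\top\succeq\lambda_{\min}I_r$, hence $C_{\widetilde\mu^*}\succeq\lambda_{\min}I_r$ and the determinant is nonzero. In fact the Haar average of $QCQ^\top$ equals $\frac{\rrm{Tr}C}{r}I_r$, but the lower bound already suffices.

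\textbf{Invariance under the dynamics (main step).} Here we must show $\al P_t^*\widetilde\pi^*=\widetilde\pi^*$. The subtlety is that the DO system is law-dependent (McKean--Vlasov type): $\al P_t^*$ is \emph{not} linear in the initial law, so we cannot simply average the identities $\al P_t^*(\al R_Q)_\#\pi^*=(\al R_Q)_\#\pi^*$ over $Q$. The plan is to realize $\widetilde\pi^*$ through a random rotation and interpret the DO coefficients correctly in that setting:

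\begin{enumerate}
\item Take $\theta$ Haar-distributed on $O(r)$, independent of $(U_0,Y_0)\sim\pi^*$ and of $B$.
\item Set $(\widetilde U_0,\widetilde Y_0)=(\theta U_0,\theta Y_0)$, which has law $\widetilde\pi^*$, and set $(\widetilde U_t,\widetilde Y_t):=(\theta U_t,\theta Y_t)$.
\item Verify, as in the proof of Theorem~\ref{theo:rotation}, that this rotated pair solves \eqref{eq:DO}.
\end{enumerate}

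The $Y$-equation is pathwise and causes no difficulty. For the $U$-equation, $U_t$ is now random, so the expectations $C_{Y}$ and $\bb E[Yb(U^\top Y)^\top]$ must be read as conditional on the $\scr F_0$-measurable frame. With random initial data, this is precisely the interpretation under which $\pi^*$ itself is invariant. Conditioning on $\theta=Q$ reduces the check to the computation in Theorem~\ref{theo:rotation}:
\[
C_{\widetilde Y_t\mid\theta}=\theta C_{Y_t}\theta^\top,\qquad
\bb E[\widetilde Y_tb(\widetilde U_t^\top\widetilde Y_t)^\top\mid\theta]=\theta\,\bb E[Y_tb(U_t^\top Y_t)^\top],
\]
so the $U$-equation holds. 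Uniqueness from Proposition~\ref{wellposedness} then identifies $(\widetilde U_t,\widetilde Y_t)$ as the solution started from $\widetilde\pi^*$, which requires the covariance nondegeneracy proved above.

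Consequently,
\[
\al P_t^*\widetilde\pi^*=\int_{O(r)}(\al R_Q)_\#\scr L_{(U_t,Y_t)}\,\rrm dQ=\int_{O(r)}(\al R_Q)_\#\pi^*\,\rrm dQ=\widetilde\pi^* .
\]

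I expect the main obstacle to be justifying rigorously that the law-dependent DO flow started from a mixture over rotations acts componentwise on the mixture. The key observation is that the DO coefficients are equivariant under $O(r)$, so the statistics entering the equation transform covariantly and the solution from the mixture is exactly the mixture of rotated solutions.
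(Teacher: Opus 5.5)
The Haar construction of $\widetilde\pi^*$, its $O(r)$-invariance, the second-moment bound and the covariance bound are correct; your $\lambda_{\min}$ argument is a fine substitute for the trace identity. The gap is in the invariance step, namely the claim that $(\theta U_t,\theta Y_t)$ is \emph{the} DO evolution started from $\widetilde\pi^*$.

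\textbf{The gap.} You say the expectations in the $U$-equation are conditional on the frame, but you compute them conditional on $\theta$. The frame of the rotated process is $\widetilde U_0=\theta U_0$, and this does not in general determine $\theta$. For example, if the first marginal of $\pi^*$ is $O(r)$-invariant (as in Example~\ref{exam:notdirac}), then $\theta U_0$ is independent of $\theta$. The statistics conditional on the frame are then $\theta$-averages, not $\theta C_{Y_t}\theta^\top$ and $\theta\,\bb E[Y_tb(U_t^\top Y_t)^\top]$.

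If you instead condition on $\theta$, the dynamics depend on auxiliary randomness that is not encoded in the law $\widetilde\pi^*$. Your process is then only one candidate evolution and does not identify $\al P_t^*\widetilde\pi^*$. Proposition~\ref{wellposedness} cannot repair this: it gives uniqueness only for a \emph{deterministic} frame $U_0$ under the unconditional condition $\bb E[Y_0Y_0^\top]\succeq\sigma_{Y_0}I_r$.

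The assertion that the conditional reading is ``precisely'' the one under which $\pi^*$ is invariant is also unsupported. The hypothesis $\det C_{\mu^*}\neq0$ and Theorem~\ref{inv} correspond to the frozen reading, in which $\pi^*$ is invariant for \eqref{eq.FrozenDO} with $\mu=\mu^*$, the unconditional second marginal. In that reading $(\al R_Q)_\#\pi^*$ is invariant for the frozen system with $Q_\#\mu^*$. But $\widetilde\pi^*$ would have to be invariant for the frozen system with $\widetilde\mu^*=\int_{O(r)}Q_\#\mu^*\,\rrm dQ$. Its $U$-drift $C_{\widetilde\mu^*}^{-1}\int x\,b(U^\top x)^\top\widetilde\mu^*(\rrm dx)(I_d-P_U)$ is in general not the drift of the components. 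They do agree for linear $b(x)=Bx$, where the drift reduces to $UB^\top(I_d-P_U)$ and does not depend on $\mu$.

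So your final display is exactly the mixture-linearity you said was unavailable. $O(r)$-equivariance makes each rotated component evolve correctly on its own, but it does not make the statistics of the mixture equal to those of its components.

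\textbf{How the paper proceeds.} The paper never leaves the linear picture. It tests invariance with $P_tf(u,y)=\bb E[f(U_t^{u,y},Y_t^{u,y})]$ for point initial data. It uses Theorem~\ref{theo:rotation} pointwise in the form $P_t(f\circ\al R_Q)=(P_tf)\circ\al R_Q$. It then concludes by Fubini and the invariance of $\pi^*$ applied to $f\circ\al R_Q$.

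Your objection that the DO flow is law-dependent is a fair criticism of that framework, but it is the one the paper adopts throughout Section~\ref{sec:invariant}. Within it, those few lines together with your first two paragraphs form a complete proof. If you reject that framework, the random-rotation device does not supply a substitute, and the statement would need a genuinely new argument.
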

\begin{proof}
Since there exists an invariant probability measure $\pi^*\in\scr P_2(V_r(\bb R^d)\times\bb R^r)$ of Eqn.~\eqref{eq:DO}, we average the joint invariant measure $\pi^*$ over $O(r)$. Define $T$ on $C_{b}(V_r(\bb R^d)\times \bb R^r)$ by 
\[
T(f):=\int_{O(r)}\int f(Qu,Qy)\,\pi^*(\rrm du,\rrm dy)\,\rrm dQ,\qquad f\in C_{b}(V_r(\bb R^d)\times \bb R^r).
\]
Since $Q\mapsto\int f(Qu,Qy)\,\pi(\rrm du,\rrm dy)$ is continuous and $O(r)$
is compact, the right-hand side defines a positive linear functional
on $C_{b}(V_r(\bb R^d)\times \bb R^r)$ with value $1$ at $f\equiv1$. Hence, by the Riesz representation
theorem~\cite[Theorem 14.9]{aliprantis2006infinite}, there exists a unique probability measure $\widetilde{\pi}^*$ such that 
$$T(f)=\int f(u,y)\, \widetilde \pi^*(\rrm du,\rrm dy),\quad f\in C_{b}(V_r(\bb R^d)\times \bb R^r).$$
For $f\in C_{b}(V_r(\bb R^d)\times \bb R^r)$, the map 
\[
Q\mapsto\int f(Qu,Qy)\,\pi^*(\rrm du,\rrm dy)
\]
is continuous. Hence the right-hand side defines a positive linear functional on $C_{\mathrm{b}}(V_r(\bb R^d) \times \bb R^r)$, and therefore a probability measure $\widetilde{\pi}^*$.

We claim that $\widetilde{\pi}^*$ is an invariant measure for Eqn.~\eqref{eq:DO}. Indeed, for any $t>0$ and any $f\in C_{\mathrm{b}}(V_r(\bb R^d)\times\bb R^r)$, by Theorem~\ref{theo:rotation}, we have
\[
\begin{aligned}\int_{V_r(\bb R^d)\times\bb R^r} P_tf(u,y)\,\widetilde{\pi}^*(\rrm du,\rrm dy)&=\int_{V_r(\bb R^d)\times\bb R^r} \bb E[f(U_t^{u,y},Y_t^{u,y})]\,\widetilde{\pi}^*(\rrm du,\rrm dy) \\& =\int_{O(r)}\int_{V_r(\bb R^d)\times\bb R^r} \bb E[f(QU_t^{u,y},QY_t^{u,y})]\,\pi^*(\rrm du,\rrm dy)\,\rrm dQ\\
 & =\int_{O(r)}\int_{V_r(\bb R^d)\times\bb R^r}\bb E[f(U_t^{Qu,Qy},Y_t^{Qu,Qy})]\,\pi^*(\rrm du,\rrm dy)\,\rrm dQ\\
 & =\int_{O(r)}\int_{V_r(\bb R^d)\times\bb R^r} f(Qu,Qy)\,\pi^*(\rrm du,\rrm dy)\,\rrm dQ\\
 & =\int f\,d\widetilde{\pi}^*,
\end{aligned}
\]
where $(U_0^{u,y},Y_0^{u,y})=(u,y)$. 

Next, we verify that $\widetilde\pi^*$ is $O(r)$-invariant. For any $R\in O(r)$ and any $\varphi\in C_{\mathrm{b}}(V_r(\bb R^r)\times\bb R^r)$, it follows from the definition of $\widetilde\pi^*$ that
$$
\begin{aligned}
    \int_{V_r(\bb R^r)\times\bb R^r}\varphi(Ru,Ry)\,\widetilde \pi^*(\rrm du,\rrm dy)&=\int_{O(r)}\int_{V_r(\bb R^r)\times\bb R^r}\varphi(RQu,RQy)\, \pi^*(\rrm du,\rrm dy)\,\rrm dQ\\&=\int_{O(r)}\int_{V_r(\bb R^r)\times\bb R^r}\varphi(Qu,Qy)\, \pi^*(\rrm du,\rrm dy)\,\rrm dQ\\&= \int_{V_r(\bb R^r)\times\bb R^r}\varphi(u,y)\,\widetilde \pi^*(\rrm du,\rrm dy).
\end{aligned}
$$
Let $\widetilde{\mu}^*$ be the second marginal of $\widetilde{\pi}^*$. Then, for
every $\varphi\in C_{b}(\mathbb{R}^{r})$, 
\[
\int_{\bb R^r}\varphi(y)\,\tilde{\mu}^*(\rrm dy)=\int_{O(r)}\int_{\bb R^r}\varphi(Qy)\,\mu^*(\rrm dy)\,\rrm dQ.
\]
Moreover, 
\[
C_{\widetilde{\mu}^*}=\int_{O(r)}QC_{\mu^*}Q^{\top}\,\rrm dQ=\frac{\operatorname{Tr}(C_{\mu^*})}{r}I_{r}.
\]
Since $\det{C_{\mu^*}}\ne0$, we have $\frac{\operatorname{Tr}(C_{\mu^*})}{r}\ne0$,
and hence 
$
\det(C_{\widetilde{\mu}^*})\ne 0.$

\end{proof}
From the preceding discussion, it follows that both the first and second marginals of $\widetilde\pi^*$ are $O(r)$-invariant.

Next, we give the invariant probability of DLRA of system \eqref{eq.sec4}.
\begin{corollary}
    Under the same assumptions as in Theorem~\ref{inv}, the DLRA~\eqref{eqn:intro_xr} corresponding to Eqn.~\eqref{eq.sec4} admits an invariant probability measure.
\end{corollary}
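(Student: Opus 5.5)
The plan is to push forward the invariant measure $\pi^*$ of Theorem~\ref{inv} under the product map
\[
\Phi:V_r(\bb R^d)\times\bb R^r\to\bb R^d,\qquad \Phi(u,y):=u^\top y,
\]
and to take $\nu^*:=\pi^*\circ\Phi^{-1}\in\scr P(\bb R^d)$ as the candidate invariant measure for $X^{(r)}=U^\top Y$.

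\textbf{Step 1 (admissible initial datum).} Take $(U_0,Y_0)$ with $\scr L_{(U_0,Y_0)}=\pi^*$. This is a random initial condition, as allowed in Section~\ref{sec:invariant}. By Theorem~\ref{inv}, the second marginal $\mu^*$ satisfies $\det(C_{\mu^*})\neq0$. Hence $\bb E[Y_0Y_0^\top]=C_{\mu^*}\succeq\lambda_{\min}(C_{\mu^*})I_r$ with $\lambda_{\min}(C_{\mu^*})>0$. This is exactly the non-degeneracy condition required by Proposition~\ref{wellposedness}, so the strong DO solution exists globally and $X^{(r)}_t=U_t^\top Y_t$ is well defined for all $t\ge0$.

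\textbf{Step 2 (second moment).} Since $\|u\|_{\rrm{HS}}^2=r$ on $V_r(\bb R^d)$ and $|u^\top y|=|y|$ for rows orthonormal, we get $\int|x|^2\,\nu^*(\rrm dx)=\int|y|^2\,\mu^*(\rrm dy)<\infty$. Therefore $\nu^*\in\scr P_2(\bb R^d)$.

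\textbf{Step 3 (invariance).} For any bounded measurable $g:\bb R^d\to\bb R$, the function $f:=g\circ\Phi$ is bounded and measurable on $V_r(\bb R^d)\times\bb R^r$, since $\Phi$ is continuous. Definition~\ref{def:inv} and the invariance of $\pi^*$ then give, for every $t>0$,
\[
\bb E[g(X^{(r)}_t)]=\bb E[f(U_t,Y_t)]=\pi^*(f)=\int g(u^\top y)\,\pi^*(\rrm du,\rrm dy)=\nu^*(g).
\]
So $\scr L_{X^{(r)}_t}=\nu^*$ for all $t$ whenever $X^{(r)}_0=U_0^\top Y_0$ is drawn from this lift of $\nu^*$.

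\textbf{Main obstacle.} The main difficulty is interpretive. $X^{(r)}$ is not an autonomous Markov process: the coefficients in \eqref{eqn:DLRA} depend on $U_t$ and on the law of $Y_t$. Consequently, invariance has to be understood as stationarity of the law $\scr L_{X^{(r)}_t}$ along DO solutions started from $\pi^*$.

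One should also check that this law does not depend on which factorisation $(U_0,Y_0)$ of $X^{(r)}_0$ is chosen. Rotations $(QU_0,QY_0)$ with $Q\in O(r)$ leave $U_t^\top Y_t$ unchanged. By Theorem~\ref{theo:rotation}, if we use the $O(r)$-invariant measure $\widetilde\pi^*$ of Theorem~\ref{cor:inv} instead of $\pi^*$, the pushforward $\widetilde\pi^*\circ\Phi^{-1}$ coincides with $\nu^*$. This is consistent with the rotation non-uniqueness, so the statement can be made with either measure.
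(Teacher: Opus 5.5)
Your proposal is correct and is essentially the paper's argument: you push an invariant measure of $(U_t,Y_t)$ forward under $(u,y)\mapsto u^\top y$ and transfer invariance through $g\circ\Phi$. The only difference is that you push forward $\pi^*$ itself rather than the $O(r)$-averaged $\widetilde\pi^*$ of Theorem~\ref{cor:inv}. This is harmless, and slightly more economical, because $\Phi(Qu,Qy)=\Phi(u,y)$ forces the two image measures to coincide.

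One minor caveat: Proposition~\ref{wellposedness} is formulated for deterministic $U_0$. The dynamics started from the random law $\pi^*$ are therefore really those supplied by Theorem~\ref{inv} (the frozen system with $\mu=\mu^*$), not by that proposition; your Step~1 is not needed for the conclusion anyway.
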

\begin{proof}
{From Theorem~\ref{inv} the strong DO solution~\eqref{eq:DO} to Eqn.~\eqref{eq.sec4} admits an invariant probability measure $\pi^*$, and from Theorem~\ref{cor:inv} an invariant probability measure \(\widetilde\pi^*\), that is \(O(r)\)-invariant of 
the joint process \((U_t,Y_t)\), can be constructed from $\pi^*$.}
Let
\[
F:V_r(\mathbb R^d)\times\mathbb R^r\to\mathbb R^d,
\qquad
F(u,y):=u^\top y,
\]
and define the image measure
\(
\rho^*:=F_\#\widetilde\pi^*,
\)
where $$F_\#\widetilde\pi^*(A)=\widetilde\pi (\{(u,y)\in V_r(\mathbb R^d)\times\mathbb R^r: F(u,y)\in A\}),\quad A\in\scr B(\bb R^d).$$
Therefore, by the change-of-variables formula for image measures, for every bounded measurable \(\varphi:\mathbb R^d\to\mathbb R\),
\[
\int_{\mathbb R^d}\varphi(x)\,\rho^*(\rrm dx)
=
\int_{\mathbb R^d}\varphi(x)\,(F_\#\widetilde\pi^*)(\rrm dx)
=
\int_{V_r(\mathbb R^d)\times\mathbb R^r}
(\varphi\circ F)(u,y)\,\widetilde\pi^*(\rrm du,\rrm dy).
\]
Since $\varphi\circ F$ is bounded and measurable on $V_r(\mathbb R^d)\times\mathbb R^r$ and $\widetilde\pi$ is an invariant probability measure for the process $(U_t,Y_t)$, it yields that 
\[
\begin{aligned}
\mathbb E[\varphi(X_t^{(r)})]
=
\mathbb E[(\varphi\circ F)(U_t, Y_t)]
&=
\int_{V_r(\mathbb R^d)\times\mathbb R^r}
(\varphi\circ F)(u,y)\,\widetilde\pi^*(\rrm du,\rrm dy)\\
&=
\int_{\mathbb R^d}\varphi(x)\,\rho^*(\rrm dx).
\end{aligned}
\]

{Let 
\[
F:V_r(\mathbb R^d)\times \mathbb R^r\to \mathbb R^d,
\qquad
F(u,y):=u^\top y.
\]
Define
\[
\rho^*:=F_\#\widetilde\pi^*.
\]
For any \(Q\in O(r)\), define
\[
T_Q(u,y):=(Qu,Qy).
\]
Then
\[
(F\circ T_Q)(u,y)
=
F(Qu,Qy)
=
(Qu)^\top(Qy)
=
u^\top y
=
F(u,y).
\]
Hence
\[
F\circ T_Q=F.
\]
Therefore,
\[
F_\#\big((T_Q)_\#\widetilde\pi^*\big)
=
(F\circ T_Q)_\#\widetilde\pi^*
=
F_\#\widetilde\pi^*
=
\rho^*.
\]
Since \(\widetilde\pi^*\) is \(O(r)\)-invariant, \((T_Q)_\#\widetilde\pi^*=\widetilde\pi^*\). Thus \(\rho^*\) is compatible with the \(O(r)\)-symmetry of the DO factorisation.}

\end{proof}
This corollary provides a method for finding an invariant probability measure for the DLRA. In certain special cases, this method can be simplified. For example, when $(U_t,Y_t)$ possesses an invariant probability measure of the form $(\delta_{\phi^*},\mu^*),$ the invariant probability measure of $X_t^{(r)}$ is the image measure of $\mu^*$ under the linear map $y\mapsto \phi^{*\top }y$, i.e.,
$$
\nu := (\phi^{*\top})_\# \mu^*.
$$

Based on the distance between the SDE~\eqref{eq.sec4} and its DLRA~\eqref{eqn:intro_xr} established in Section~\ref{sec:uniform}, the next corollary estimates the distance between their invariant probability measures.
\begin{corollary}
    Under the same assumptions as in Theorem~\ref{maintheorem}. Assume moreover that the original SDE $(X_t)_{t\ge0}$ admits a unique invariant probability measure $\pi_1$, and that the reduced process $(X_t^{(r)})_{t\ge0}$ also possesses a unique invariant probability measure $\pi_2$. Then, we obtain
    $$\bb W_2\big(\pi_1,\pi_2\big)\le C(d-r).$$
\end{corollary}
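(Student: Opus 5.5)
The plan is to pass to the limit $t\to\infty$ in the estimate of Theorem~\ref{maintheorem}, taking the invariant measures themselves as initial laws. Let $X_0$ have law $\mu=\pi_1$ and let the reduced process start from a DO initial datum $(U_0,Y_0)$ whose image law is $\nu=\pi_2$. If $\pi_2$ is the image measure $\rho^*=F_\#\widetilde\pi^*$ from the preceding corollary, take $(U_0,Y_0)\sim\widetilde\pi^*$, so that $X^{(r)}_0=U_0^\top Y_0$ has law $\pi_2$. Invariance then gives $\mu_t=\pi_1$ and $\nu_t=\pi_2$ for all $t\ge0$. Substituting into \eqref{eq.theorem} yields
\[
\bb W_2^2(\pi_1,\pi_2)\le \rrm e^{\delta t}\bb W_2^2(\pi_1,\pi_2)+\frac{C(\pi_1,\pi_2,b,\sigma)}{|\delta|}(d-r)\qquad\text{for all } t\ge0 .
\]
Since $\delta<0$ and $\bb W_2(\pi_1,\pi_2)<\infty$, letting $t\to\infty$ removes the first term, and $\bb W_2^2(\pi_1,\pi_2)\le C'(d-r)$ follows.

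For $\bb W_2(\pi_1,\pi_2)$ to be finite, both measures must lie in $\scr P_2(\bb R^d)$. For $\pi_1$ this follows from the uniform bound in Theorem~\ref{bounded} with $r=d$: start from any $\delta_x$ and use Fatou's lemma together with uniqueness, so that the time-averages, which are tight, converge to $\pi_1$. For $\pi_2$ it follows from $\widetilde\pi^*\in\scr P_2$ and $|u^\top y|=|y|$ when $u\in V_r(\bb R^d)$. The constant $C(\mu,\nu,b,\sigma)$ from Theorem~\ref{maintheorem} then depends only on the second moments of $\pi_1,\pi_2$ through Theorem~\ref{bounded}, so it is a fixed constant $C$.

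The main obstacle is the \emph{form} of the bound. The argument above gives $\bb W_2(\pi_1,\pi_2)\le C\sqrt{d-r}$. Since $d-r\ge 1$ is an integer whenever $r<d$, we have $\sqrt{d-r}\le d-r$, so this implies the stated bound $C(d-r)$. When $r=d$, Proposition~\ref{PropXandXDO} gives $X^{(d)}=X$, so uniqueness forces $\pi_1=\pi_2$ and both sides vanish.

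A second subtlety is that the reduced process is not Markov in $X^{(r)}$ alone, so "invariant measure of $X^{(r)}$" must be understood as in the preceding corollary: there is a DO initial law whose pushforward under $F$ stays equal to $\pi_2$. If $\pi_2$ is not a priori of this form, the fallback is to avoid invariance of $\nu_t$ altogether. Start the reduced system from an arbitrary admissible datum with $C_{Y_0}\succ0$. Then $\nu_t\to\pi_2$ weakly by the assumed uniqueness, through Krylov--Bogoliubov averaging and the uniform moment bound of Theorem~\ref{bounded}, which also gives uniform integrability of second moments and hence convergence in $\bb W_2$. Meanwhile $\mu_t\to\pi_1$ in the same way. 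Lower semicontinuity of $\bb W_2$ under weak convergence, applied to \eqref{eq.theorem} as $t\to\infty$, then gives the same conclusion.
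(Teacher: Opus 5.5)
Your main argument is the paper's own: start $X$ and $X^{(r)}$ from $\pi_1$ and $\pi_2$, use invariance in \eqref{eq.theorem}, and let $t\to\infty$. You carry it out more carefully than the paper does. The paper's proof writes $\bb W_2(\pi_1,\pi_2)=\bb W_2^2(\scr L_{X_t},\scr L_{X_t^{(r)}})$ and never checks that $\pi_1,\pi_2\in\scr P_2(\bb R^d)$, whereas you correctly obtain $\bb W_2^2(\pi_1,\pi_2)\le C(d-r)$ and then use $\sqrt{d-r}\le d-r$.

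You should drop the fallback paragraph, however, because its key claim is false. Krylov--Bogoliubov plus uniqueness gives convergence only of the Ces\`aro averages $\frac1T\int_0^T\nu_t\,\rrm dt$, not of $\nu_t$ itself. Even that step needs a linear Feller semigroup, which the law-dependent DO flow does not provide. Example~\ref{exam:notdirac} with $d=2$, $r=1$ and a deterministic $U_0$ is an explicit counterexample. There $\scr L_{X^{(1)}_t}$ is supported on the rotating line $\bb R\,U_t^\top$ and never converges. Yet, with random $U_0$ admitted as in your interpretation, the reduced process has a unique invariant law: the image of (uniform law on $\al S^{1}$)$\,\otimes\,\al N(0,\tfrac12)$ under $(u,y)\mapsto u^\top y$.
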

\begin{proof}
   Set $\scr L_{X_0} = \pi_1$ and $\scr L_{X_0^{(r)}} = \pi_2$. By the definition of an invariant probability measure, we have $\scr L_{X_t} = \pi_1$ and $\scr L_{X_t^{(r)}} = \pi_2$ for all $t>0$. By Theorem~\ref{maintheorem}, there exists a constant $C>0$ such that 
   \begin{equation}
       \bb W_2\big(\pi_1,\pi_2\big)=\bb W_2^2(\scr L_{X_t},\scr L_{X_t^{(r)}})\le \rrm e^{\delta t}\bb W_2\big(\pi_1,\pi_2\big)+C(d-r),
   \end{equation}
   where $\delta=2(L_{b,3}+L_{\sigma,1})$. Then, letting $t\to+\infty$, we obtain the desired result.
\end{proof}

The following example illustrates that, even though 
$U$ evolves deterministically, the first marginal of the invariant measure need not be a Dirac measure. 
This depends on the structure of the deterministic flow.
		\begin{example}\label{exam:notdirac}
		Consider the following SDE in $\bb R^d$, 
        \begin{equation}
            \rrm dX_t=(A-I_d)X_t\, \rrm dt+\rrm d B_t,
        \end{equation}   where $A$ is an anti-symmetric orthogonal matrix on $\bb R^{d\times d}$, and $(B_t)_{t\ge0}$ is a $d$-dimensional Brownian motion. Then, this SDE admits a unique invariant probability measure given by $\al N(0,\frac{1}{2}I_d)$, i.e., the Gaussian distribution with mean $0$ and variance matrix is $\frac{1}{2}I_d$, by~\cite[Proposition 3.5]{pavliotis2014stochastic}. Moreover, the $1$-dimensional DO approximation of this SDE has an invariant probability $\pi=\nu\otimes\mu$, where $\nu$ is the uniform distribution on the unit sphere 
        \begin{equation}\label{eqn:S_1}
            \al S^{d-1}:=\{u_0\in\bb R^d:|u_0|=1\}, 
        \end{equation}and $\mu$ is the Gaussian distribution with mean $0$ and variance $1/2$. Furthermore, the $1$-dimensional DO approximation of this SDE also has an invariant probability $\pi^*$, which is $O(1)$-invariant, such that for every bounded measurable function $f:\bb R^d\times \bb R\to\bb R$, one has 
        \begin{equation}
            \int_{\bb R^d\times \bb R}f(u,y)\pi^*(\rrm du,\rrm dy)=\frac{1}{2}\Big(\int_{\bb R^d\times \bb R}f(u,y)\pi(\rrm du,\rrm dy)+\int_{\bb R^d\times \bb R}f(-u,-y)\pi(\rrm du,\rrm dy)\Big).
        \end{equation}
        \end{example} 
    
         Note that $UAU^\top=0$ and thus $UAU^\top U=0$ for any $U\in \mathbb{R}^{1\times d}$ because of $A$ being anti-symmetric. The pair $(U_t,Y_t)$ defined in Eqn.~\eqref{eq:DO}, where $(Y_t)_{t\ge0}$ is a $\bb R$-valued stochastic process, is therefore given by 
		\begin{equation}\label{exam.2}
			\begin{aligned}
				&\rrm dU_t=U_tA\rrm dt,\\
				&\rrm dY_t=-Y_t\rrm dt+U_t\rrm dB_t,
			\end{aligned}
		\end{equation}
		where $(B_t)_{t\ge0}$ is a $\bb R^d$-valued Brownian motion.
	From the explicit form of the dynamics, we obtain the solution
	\begin{equation}
		\begin{aligned}
			&U_t=U_0\rrm e^{At}=U_0\big(I_d\cos t+A\sin t\big),\\
			&Y_t=\rrm e^{-t}Y_0+\int _0^t\rrm e^{-(t-s)}U_s\rrm dB_s,
		\end{aligned}
	\end{equation}
	where the initial condition $(U_0,Y_0)$ satisfies the initial condition of strong DO solution introduced in Section~\ref{sec.intr}.  Note that $U_t=U_0\big(I_d\cos t+A\sin t\big)$, which implies that the process $(U_t)_{t\ge0}$ does not converge to a single point. Nevertheless, we can explicitly construct an invariant probability measure for $(U_t)_{t\ge0}$ as well as for the joint process $(U_t,Y_t)_{t\ge0}$, in accordance with Definition~\ref{def:inv}.
    
    Define 
	$$\gamma_t:=\int_0^tU_s \rrm dB_s.$$
	By L\'evy's characterization theorem~\cite[Chapter 3, Theorem 3.16]{karatzas2014brownian}, we know $(\gamma_t)_{t\ge0}$ is $\bb R$-valued standard Brownian motion. Then the process $Y$ can be rewritten as 
	\begin{equation}
		Y_t=\rrm e^{-t}Y_0+\int _0^t\rrm e^{-(t-s)}\rrm d\gamma_s.
	\end{equation}
	The process $(Y_t)_{t\ge0}$	is therefore an Ornstein-Uhlenbeck process with invariant probability measure $\mu$,  a Gaussian distribution with mean $0$ and variance $\frac{1}{2}$.	Consequently, for every bounded measurable function $g:\bb R\to\bb R$ and any initial data $Y_0=y\in\bb R$,
	\begin{align*}
		\int_{\bb R}\bb E[g(Y_t^{y})]\mu(\rrm dy)=\mu\Big( g\big(Y_t^{\cdot}\big)\Big)=\mu\big(g(\cdot)\big)=
		\int_{\bb R}g(y)\mu(\rrm dy).
	\end{align*}
	Define the product measure $\pi=\nu\otimes\mu$, where $\nu$ denotes the uniform distribution on the unit sphere $\al S^{d-1}$. Indeed,  by Theorem 2.2.1 in~\cite{chikuse2003statistics}, let $Z$ be an $\bb R^d$-valued Gaussian random vector with mean zero and identity covariance matrix $I_d$. Define $\xi$ by 
    \begin{equation}
        \xi=Z^\top(ZZ^\top)^{-\frac{1}{2}}.
    \end{equation} Then the distribution $\nu$ of $\xi$ is exactly the uniform distribution on the unit sphere $\al S^{d-1}$.
    
    For every bounded measurable function $f:\bb R^{d}\times\bb R\to\bb R$, we compute
	\begin{align*}	\pi\Big(f\big((U_t,Y_t)^\cdot\big)\Big)&=\int_{\al S^{d-1}}\int_{\bb R}\bb E[f(U_t^u,Y_t^{y})]\mu(\rrm dy)\nu(\rrm du)\\&=
		\int_{\al S^{d-1}}\int_{\bb R}\bb E[f(U_t^u,y)]\mu(\rrm dy)\nu(\rrm du)\\&=
		\int_{\bb R}\int_{\al S^{d-1}}f\big(u\rrm e^{At},y\big)\nu(\rrm du)\mu(\rrm dy).
	\end{align*}
	Since $\rrm e^{At}$ is an orthogonal matrix with determinant $1$, and the map $u\mapsto u\rrm e^{At} $ preserves uniform distribution and leaves the set $\al S^{d-1}$ invariant. Therefore, 
	$$\int_{\bb R}\int_{\al S^{d-1}}f\big(u\rrm e^{At},y\big)\nu(\rrm du)\mu(\rrm dy)=
	\int_{\bb R}\int_{\al S^{d-1}}f\big(u,y\big)\nu(\rrm du)\mu(\rrm dy)= \pi\big(f(\cdot)\big).$$
	
	Thus, the product measure $\pi=\nu\otimes\mu$ is the invariant probability measure for the system~\eqref{exam.2}. Next, we prove $\pi^*$ is also an invariant measure. Also using the same method of proving $\pi$ is an invariant probability measure, we have 
    \begin{align*}
    \pi^*\Big(f\big((U_t,Y_t)^\cdot\big)\Big)&=\frac{1}{2}\int_{\al S^{d-1}}\int_{\bb R}\bb E[f(U_t^u,Y_t^{y})]\mu(\rrm dy)\nu(\rrm du)+\frac{1}{2}\int_{\al S^{d-1}}\int_{\bb R}\bb E[f(U_t^{-u},Y_t^{-y})]\mu(\rrm dy)\nu(\rrm du)\\&=
		\frac{1}{2}\int_{\al S^{d-1}}\int_{\bb R}f(U_t^u,y)\mu(\rrm dy)\nu(\rrm du)+\frac{1}{2}\int_{\al S^{d-1}}\int_{\bb R}f(U_t^{-u},-y)\mu(\rrm dy)\nu(\rrm du)\\&=
		\frac{1}{2}\int_{\bb R}\int_{\al S^{d-1}}f\big(u\rrm e^{At},y\big)\nu(\rrm du)\mu(\rrm dy)+\frac{1}{2}\int_{\bb R}\int_{\al S^{d-1}}f\big(-u\rrm e^{At},-y\big)\nu(\rrm du)\mu(\rrm dy)\\&=\frac{1}{2}\int_{\bb R}\int_{\al S^{d-1}}f\big(u,y\big)\nu(\rrm du)\mu(\rrm dy)+\frac{1}{2}\int_{\bb R}\int_{\al S^{d-1}}f\big(-u,-y\big)\nu(\rrm du)\mu(\rrm dy)\\&=\pi^*(f).
	\end{align*}
    Example~\ref{exam:notdirac} shows that the first marginal of the invariant measure to Eqn.~\eqref{eq:DO} is not necessarily a Dirac measure. By contrast, the following example illustrates that such a marginal can indeed be a Dirac measure.
	\begin{example}
	    Consider the following SDE in $\bb R^d$,
        \begin{equation}
            \rrm d X_t=f(X_t)X_t\,\rrm dt+\sigma(X_t)\,\rrm d B_t.
        \end{equation}
        Here $f:\bb R^d\to\bb R$ is a negative bounded function, and $(B_t)_{t\ge0}$ denotes a $d$-dimensional Brownian motion. In addition, $\sigma$ satisfies Assumptions (H3) and (H5). If $\sup_{x\in\bb R^d} f(x)+L_{\sigma,1}<0$, where $L_{\sigma,1}$ is defined as in (H3), then the SDE admits a unique invariant probability measure~\cite[Theorem 3.7]{khasminskii2011stochastic}. Furthermore, the $r$-dimensional DO approximation of this SDE possesses an invariant probability measure of the form $\pi=\delta_u\otimes\mu_u$, where $\delta_u$ is a Dirac measure on $u\in V_r(\bb R^d)$defined by
\begin{equation}\label{eqn:dirac}
    \delta_{u}(A) := \left\lbrace\begin{aligned}
        & 0, && u \notin A;\\
        & 1, && u \in A.
    \end{aligned}\right.
\end{equation}
and $\mu_u$ is the invariant probability measure of $Y_t$, which depends on $u$.
	\end{example}
    Since $f$ is a bounded function and $U_t^\top(I_d-P_{U_t})=0$, we have
    $$\bb E\big[Y_t (f(U_t^\top Y_t)U_t^\top Y_t)^\top\big](I_d-P_{U_t})=\bb E\big[f(U_t^\top Y_t)Y_tY_t^\top \big]U_t^\top(I_d-P_{U_t})=0.$$ Consequently, the pair $(U_t,Y_t)$ defined in Eqn.~\eqref{eq:DO} satisfies
    \begin{equation}\label{eqn:exam3}
        \begin{aligned}
            C_{Y_t}\,\rrm dU_t&=0\,{\rrm dt};\\
            \rrm d Y_t&=f(U_t^\top Y_t) Y_t\,\rrm dt+U^\top_t\sigma(U_t^\top Y_t)\,\rrm d B_t.
        \end{aligned}
    \end{equation}
    Since $\sigma$ satisfies (H5), by the same arguments as in Theorem~\ref{theo:inverse}, it follows that $C_{Y_t} \neq 0$ for all $t \ge 0$. Consequently, $U_t$ remains constant in time. Hence, $Y_t$ is a Markov process parameterized by the initial value $U_0$, and it admits an invariant probability measure $\mu_{u}$. 

    It then follows that system \eqref{eqn:exam3} possesses an invariant probability measure of the form $\pi=\delta_u\otimes\mu_u$ for any $u\in V_r(\bb R^d)$.
        \\
    \\

	    To prove Theorem~\ref{inv}, we first decouple the distribution. We then transform Eqn.~\eqref{eq:DO} into a classical SDE. By the Krylov–Bogoliubov theorem (see Corollary 3.1.2 in~\cite{da1996ergodicity}), this classical SDE admits an invariant probability measure. Finally, using the Kakutani-Fan-Glicksberg fixed point theorem (see Theorem~\ref{fixedpoint}), we establish that the strong DO solution of Eqn.~\eqref{eq:DO} possesses an invariant probability measure. 
        
        For any $\kappa>0$ and any $p\ge 2$, fix $\mu\in \scr P_{2}^{\kappa}(\bb R^r)$. For any $\nu \in\scr P_{p}(\bb R^r)$ and $\eta\in\scr P(V_r(\bb R^d))$, consider the frozen DO system
         
         \begin{equation}\label{eq.FrozenDO}
             \begin{aligned}
                 C_{\mu}\rrm dU_t^{(\mu)}&=\int_{\bb R^r}x b(U_t^{(\mu)\top} x)^\top\mu(\rrm dx)(I_{d}-P_{U^{(\mu)}_t})\rrm dt,\\
				\rrm d Y_t^{(\mu)}&=U_t^{(\mu)}b(U_t^{(\mu)\top} Y^{(\mu)}_t)\rrm dt+U_t^{(\mu)}\sigma(U_t^{(\mu)\top} Y^{(\mu)}_t)\rrm dB_t,
             \end{aligned}
         \end{equation}
		with initial condition $\scr L_{(U_0^{(\mu)},Y^{(\mu)}_0)}=(\eta,\nu)$, where $P_{U^{(\mu)}_t}=U_t^{(\mu)\top}U_t^{(\mu)}.$
        Observe that Eqn.~\eqref{eq.FrozenDO} admits a unique solution $(U_t^{(\mu)}, Y_t^{(\mu)})_{t\ge0}$. Indeed, since the map $\psi:\bb R^{r\times d}\to \bb R^{r\times d}$ defined by  $$\psi(A) := \int_{\bb R^r} x \, b(A^{\top} x)^\top \, \mu(\rrm dx) \, (I_d - P_A),$$ is locally Lipschitz and has linear growth, and since $b,\sigma$ satisfy Assumptions~(H3), (H7) and (H8), by following the same arguments as in \cite[Theorems~3.4 and~3.6]{DLRASDE} we conclude that Eqn.~\eqref{eq.FrozenDO} possesses a unique solution. 
        
        We will employ the following lemma to establish that the collection $\big(\scr L_{(U_t^{(\mu)},Y_t^{(\mu)})}\big)_{t\ge0}$ of laws is tight.
        
		\begin{lemma}\label{bound}
			Suppose Assumptions (H3), (H7) and (H8) hold.
			Moreover, suppose  Assumption~\ref{AS:relation2} holds. Furthermore, suppose the initial condition $(\zeta,\xi)\in L^p(\Omega,V_r(\bb R^d)\times\bb R^r)$. Then, there exists a constant $C(b,\sigma,p)>0$, one has
             \begin{align}\label{eq.ubound}
                \bb E[\|U_t^{(\mu)}\|_{\rrm {HS}}^p]=r^\frac{p}{2},
            \end{align}
             and
			\begin{align}\label{eq.ybound}
				\bb E[|Y_t^{(\mu)}|^p]\le \rrm e^{-\lambda \frac{p}{2}t}\bb E[|\xi|^p]+\frac{2C(b,\sigma,p)}{\lambda p},
			\end{align}   
            where $\lambda=2L_{b,7}-2(p-1)L_{\sigma,1}-\frac{2}{p}$.
		\end{lemma}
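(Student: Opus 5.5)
The bound \eqref{eq.ubound} on $U_t^{(\mu)}$ is purely geometric. The plan is to show that the frozen flow keeps $U_t^{(\mu)}$ on the Stiefel manifold $V_r(\bb R^d)$, and then apply \eqref{eq:Vr}. Write $G_t:=C_\mu^{-1}\int_{\bb R^r}x\,b(U_t^{(\mu)\top}x)^\top\mu(\rrm dx)$, which is well defined since $C_\mu\succeq\kappa I_r$. The first equation of \eqref{eq.FrozenDO} then reads $\rrm dU_t^{(\mu)}=G_t(I_d-P_{U_t^{(\mu)}})\,\rrm dt$, and
\[
\frac{\rrm d}{\rrm dt}\big(U_t^{(\mu)}U_t^{(\mu)\top}\big)=G_t(I_d-P_{U_t^{(\mu)}})U_t^{(\mu)\top}+U_t^{(\mu)}(I_d-P_{U_t^{(\mu)}})G_t^\top .
\]
Let $W_t:=U_t^{(\mu)}U_t^{(\mu)\top}$. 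Then $(I_d-P_U)U^\top=U^\top(I_r-W)$ and $U(I_d-P_U)=(I_r-W)U$. Hence $I_r-W_t$ solves a linear ODE of the form $\dot Z=-(G U^\top Z+Z U G^\top)$ with $Z_0=0$, so $Z\equiv 0$ by uniqueness. The ODE is pathwise in $\omega$, because $U_0^{(\mu)}=\zeta$ is random but $\mu$ is frozen. Therefore $U_t^{(\mu)}\in V_r(\bb R^d)$ almost surely, $\|U_t^{(\mu)}\|_{\rrm{HS}}^2=\rrm{Tr}(W_t)=r$, and \eqref{eq.ubound} follows immediately.

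For \eqref{eq.ybound}, I will apply It\^o's formula to $|Y_t^{(\mu)}|^p$, writing $Y=Y_t^{(\mu)}$, $U=U_t^{(\mu)}$ and $x:=U^\top Y$, so that $|x|=|Y|$ since $UU^\top=I_r$. The formula gives
\[
\rrm d|Y|^p=\tfrac p2|Y|^{p-2}\Big(2\langle Y,Ub(x)\rangle+\|U\sigma(x)\|^2_{\rrm{HS}}\Big)\rrm dt+\tfrac{p(p-2)}{2}|Y|^{p-4}|\sigma(x)^\top U^\top Y|^2\rrm dt+\rrm dM_t .
\]
The drift terms are handled as follows.
\begin{itemize}
\item[(a)] Since $\langle Y,Ub(x)\rangle=\langle x,b(x)\rangle$, (H8) gives $2\langle Y,Ub(x)\rangle\le -2L_{b,7}|Y|^2+2L_{b,8}$.
\item[(b)] Both $\|U\sigma(x)\|_{\rrm{HS}}^2$ and the It\^o correction term are bounded by $\|\sigma(x)\|^2_{\rrm{HS}}$ (times $|Y|^{p-2}$). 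Combining these, the correction coefficient becomes $\tfrac p2(p-1)\|\sigma(x)\|^2_{\rrm{HS}}$.
\item[(c)] By (H3) and Young's inequality, $\|\sigma(x)\|_{\rrm{HS}}^2\le (1+\epsilon)L_{\sigma,1}|x|^2+C_\epsilon\|\sigma(0)\|^2_{\rrm{HS}}$.
\end{itemize}
Together these give a drift bound of the form $\tfrac p2\big(-2L_{b,7}+2(p-1)L_{\sigma,1}\big)|Y|^p+C|Y|^{p-2}$. The constants must be matched so that the leading rate is exactly $\lambda+\tfrac2p$. The lower-order term $C|Y|^{p-2}$ is absorbed by Young's inequality $C|Y|^{p-2}\le |Y|^p+C'$, with weights chosen to spend exactly the slack $\tfrac2p\cdot\tfrac p2=1$. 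This yields
\[
\rrm d|Y|^p\le -\lambda\tfrac p2|Y|^p\,\rrm dt+C(b,\sigma,p)\,\rrm dt+\rrm dM_t .
\]
Assumption~\ref{AS:relation2} gives $\lambda>0$. If the $(1+\epsilon)$ factor in (c) is awkward, I will instead use $\|\sigma(x)\|^2_{\rrm{HS}}\le 2L_{\sigma,1}|x|^2+2\|\sigma(0)\|^2$ and adjust how the slack is distributed; the structure of the argument is unchanged.

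To conclude, I will localize with $\varrho_N=\inf\{t:|Y_t^{(\mu)}|>N\}$ so that $\bb E M_{t\wedge\varrho_N}=0$. Applying the same estimate to $\rrm e^{\lambda\frac p2 t}|Y|^p$, I expect
\[
\bb E\big[\rrm e^{\lambda\frac p2(t\wedge\varrho_N)}|Y_{t\wedge\varrho_N}|^p\big]\le\bb E|\xi|^p+\tfrac{2C}{\lambda p}\big(\rrm e^{\lambda\frac p2t}-1\big).
\]
Fatou's lemma as $N\to\infty$, followed by multiplication by $\rrm e^{-\lambda\frac p2 t}$, then gives \eqref{eq.ybound}, exactly as in the proof of Theorem~\ref{bounded}. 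I expect the main difficulty to be the constant bookkeeping in the second step, namely producing precisely $\lambda=2L_{b,7}-2(p-1)L_{\sigma,1}-\tfrac2p$. The Stiefel invariance in the first step is routine once the ODE for $I_r-W_t$ is written down.
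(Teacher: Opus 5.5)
Your proposal is correct and follows the paper's proof essentially step for step. The frozen $U$-flow stays on the Stiefel manifold, which gives $\|U_t^{(\mu)}\|_{\mathrm{HS}}^2=r$. The moment bound for $Y_t^{(\mu)}$ then comes from It\^o's formula for $\mathrm e^{\lambda\frac p2 t}|Y_t^{(\mu)}|^p$ combined with (H8), the bound $\|\sigma(x)\|_{\mathrm{HS}}^2\le 2L_{\sigma,1}|x|^2+2\|\sigma(0)\|_{\mathrm{HS}}^2$, Young's inequality absorbing $C|Y|^{p-2}$ into the unit slack, localization, and Fatou's lemma.

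Your linear-ODE argument for $Z_t=I_r-U_t^{(\mu)}U_t^{(\mu)\top}$ is more careful than the paper's. The paper asserts $(I_d-P_{U_t^{(\mu)}})U_t^{(\mu)\top}=0$, and that identity already presupposes $U_t^{(\mu)}U_t^{(\mu)\top}=I_r$.
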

		\begin{proof}
			Note that the function $U^{(\mu)}$ is absolutely continous on $[0,T]$, so it is differentiable almost everywhere. The derivative $\frac{\rrm dU^{(\mu)}_t}{\rrm dt}$ satisfies 
			\begin{equation}
				\frac{\rrm dU^{(\mu)}_t}{\rrm dt}U_t^{{(\mu)}\top}=C_{\mu}^{-1}\int_{\bb R^r}x b(U_t^{{(\mu)}\top} x)^\top\mu(\rrm dx)(I_{d}-P_{U^{(\mu)}_t})U^{{(\mu)}\top}_t=O_r,
			\end{equation}
			where $O_r$ is zero matrix. Thus $\frac{\rrm d}{\rrm dt}\big(U^{(\mu)}_tU_t^{{(\mu)}\top}\big)=O_r$. Therefore, by Eqn.~\eqref{eq:Vr}, it follows that
			\begin{equation}\label{eq.Unorm}
				\bb E[\|U_t^{(\mu)}\|^p_\rrm{HS}]=\bb E[\|U_0^{(\mu)}\|^p_\rrm{HS}]=\bb E[\|\zeta\|^p_\rrm{HS}]=r^\frac{p}{2}.
			\end{equation}
			 Similar to the proof of Theorem~\ref{bounded}, we apply It\^o's formula, (H3) and (H8)  to obtain 
             \begin{equation}\label{eqn:Ynorm}
                 \begin{aligned}
                  \rrm d\big(\rrm e ^{\lambda\frac{p}{2} t}|Y_t^{(\mu)}|^p\big)&\le \lambda\frac{p}{2}\rrm e ^{\lambda\frac{p}{2} t}|Y_t^{(\mu)}|^p\rrm dt+p\,\rrm e ^{\lambda\frac{p}{2} t}\big|Y_t^{(\mu)}\big|^{p-2}\big\langle Y_t^{(\mu)},U_t^{(\mu)}b(U_t^{(\mu)\top}Y^{(\mu)}_t)\big\rangle\rrm dt\\&\quad+\frac{(p-1)p}{2}\rrm e ^{\lambda\frac{p}{2} t}\big|Y_t^{(\mu)}\big|^{p-2}\big\|U_t^{(\mu)}\sigma(U_t^{(\mu)\top} Y^{(\mu)}_t)\big\|^2_\rrm{HS}\rrm dt+\rrm dM^{(\mu)}_t\\&
				\le \lambda\frac{p}{2}\rrm e ^{\lambda\frac{p}{2} t}|Y_t^{(\mu)}|^p\rrm dt-\frac{p}{2}\rrm e ^{\lambda\frac{p}{2} t}\big|Y_t^{(\mu)}\big|^{p-2}\Big(2L_{b,7}\big|Y_t^{(\mu)}\big|^2-2(p-1)L_{\sigma,1}\big|Y_t^{(\mu)}\big|^2-C(b,\sigma,p)\Big)\rrm dt+\rrm dM^{(\mu)}_t\\&\le
				-\rrm e ^{\lambda\frac{p}{2} t}\big|Y_t^{(\mu)}\big|^{p}\rrm dt+\frac{p}{2}\rrm e ^{\lambda\frac{p}{2} t}C(b,\sigma,p)\big|Y_t^{(\mu)}\big|^{p-2}\rrm dt+\rrm dM^{(\mu)}_t, 
                 \end{aligned}
             \end{equation}
             where $$M^{(\mu)}_t=p\int_0^t\rrm e ^{\lambda\frac{p}{2} s}\big|Y_s^{(\mu)}\big|^{p-2}\big\langle Y_s^{(\mu)}, U_s^{(\mu)}\sigma(U_s^{(\mu)\top} Y^{(\mu)}_s)\rrm dB_s\big\rangle.$$ Applying Young's inequality in case $p>2$, we obtain
             \begin{equation}
                 \frac{p}{2}C(b,\sigma,p)\big|Y_t^{(\mu)}\big|^{p-2}\le \big|Y_t^{(\mu)}\big|^{p}+\frac{2}{(p-2)}C(b,\sigma,p).
             \end{equation}
             When $p=2$, we get 
             $
                 \frac{p}{2}C(b,\sigma,p)\big|Y_t^{(\mu)}\big|^{p-2}=C(b,\sigma,2).
             $
            For any $N\in\bb N$, define $\tau_N=\inf\{t\ge0: |Y^{(\mu)}_t|>N\}$. Then
             we have 
             \begin{equation}
                 \rrm d\bb E[\rrm e ^{\lambda\frac{p}{2} (t\wedge\tau_N)}|Y_{t\wedge\tau_N}^{(\mu)}|^p]\le\bb E[\rrm e ^{\lambda\frac{p}{2}(t\wedge\tau_N)}]C(b,\sigma,p)\rrm dt\le \rrm e ^{\lambda\frac{p}{2}t}C(b,\sigma,p)\rrm dt.
             \end{equation}
            By Fatou's lemma, we have 
				\begin{align*}
					\rrm e ^{\lambda\frac{p}{2}t}\bb E[|Y_t^{(\mu)}|^p]\le \liminf_{N\to\infty}\bb E[\rrm e ^{\lambda\frac{p}{2}(t\wedge\tau_N)}]|Y_{t\wedge\tau_N}^{(\mu)}|^p]\le \bb E[|\xi|^p]+\frac{2C(b,\sigma,p)}{\lambda p}.
				\end{align*}
            Thus, we obtain the desired estimate
            \begin{equation}
                \bb E[|Y_t^{(\mu)}|^p]\le \rrm e^{-\lambda \frac{p}{2}t}\bb E[|\xi|^p]+\frac{2C(b,\sigma,p)}{\lambda p}.
            \end{equation}
		\end{proof}
        {
        \begin{remark}
        Assumption~\ref{AS:relation2} guarantees that \(\lambda>0\). Consequently, Eqn.~\eqref{eq.ybound} yields a uniform-in-time \(p\)-moment bound for \(Y_t\). Together with the identity
\[
\|U_t\|_{HS}^2=r,
\]
this implies
\[
\sup_{t\ge0}
\mathbb E\left[\|(U_t,Y_t)\|^p\right]
<\infty.
\]
Hence the family of laws
\[
\left\{\mathcal L(U_t,Y_t):t\ge0\right\}
\]
is tight. In particular, the associated occupation measures
\[
\pi_T
=
\frac1T\int_0^T \mathcal L(U_t,Y_t)\,dt
\]
are also tight. Combined with the Feller property established later in Lemma~\ref{lem,IMP}, this provides the hypotheses required for the Krylov--Bogoliubov theorem and ultimately yields the existence of an invariant probability measure.
        \end{remark}
        }

		\begin{lemma}\label{eq.continuous}
			Suppose Assumptions (H3), (H7) and (H8) hold. Moreover, suppose Assumption~\ref{AS:relation2} holds.
			Let $\nu_1,\, \nu_2\in\scr P_2^\kappa(\bb R^r)$, $(\xi,\zeta)\in V_r(\bb R^d)\times\bb R^r$ be arbitrary.
			Furthermore, let $(U^{(\nu_1)},Y^{(\nu_1)})$ and $(U^{(\nu_2)},Y^{(\nu_2)})$ be the solution of Eqn.~\eqref{eq.FrozenDO} having $(\xi,\zeta)$ as the initial condition, i.e., $(U_0^{(\nu_1)},Y_0^{(\nu_1)})=(U_0^{(\nu_2)},Y_0^{(\nu_2)})=(\xi,\zeta)$.
			Then, 	
			there exists a constant $C=C(\kappa,t,b,\sigma,\nu_1(|\cdot|^2),\nu_2(|\cdot|^2)>0$ such that
			\begin{equation}\label{eqn:nu1nu2}
				\bb E\big[\|(U_t^{(\nu_1)},Y_t^{(\nu_1)})-(U_t^{(\nu_2)},Y_t^{(\nu_2)})\|^2\big]\le C (1+|\zeta|^2)
				\bb W_2^2(\nu_1,\nu_2),
			\end{equation}
			holds for all $t>0$.
		\end{lemma}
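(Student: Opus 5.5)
The plan is a synchronous coupling: both frozen systems are driven by the same Brownian motion from the same initial datum $(\xi,\zeta)$. First I would estimate the difference of the $U$-components. This part is deterministic, since $U^{(\nu_i)}$ solves an ODE whose right-hand side is
\[
\psi_{\nu}(A)=C_{\nu}^{-1}\int_{\bb R^r}x\,b(A^\top x)^\top\nu(\rrm dx)\,(I_d-P_A),
\]
and both solutions stay on $V_r(\bb R^d)$. Write $\Delta^U_t=U^{(\nu_1)}_t-U^{(\nu_2)}_t$ and split
\[
\psi_{\nu_1}(U^{(\nu_1)}_t)-\psi_{\nu_2}(U^{(\nu_2)}_t)=\big(\psi_{\nu_1}(U^{(\nu_1)}_t)-\psi_{\nu_1}(U^{(\nu_2)}_t)\big)+\big(\psi_{\nu_1}(U^{(\nu_2)}_t)-\psi_{\nu_2}(U^{(\nu_2)}_t)\big).
\]

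The first bracket is Lipschitz in $A$ on the compact set $V_r(\bb R^d)$. This uses $\|C_{\nu_1}^{-1}\|\le\kappa^{-1}$ together with (H7): $|b(A^\top x)-b(B^\top x)|\le L_{b,6}|x|\|A-B\|_{\rrm{HS}}$, $|b(A^\top x)|\le C(1+|x|)$, and $\|P_A-P_B\|_{\rrm{HS}}\le 2\|A-B\|_{\rrm{HS}}$. The resulting constant is of order $\kappa^{-1}(1+\nu_1(|\cdot|^2))$.

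For the second bracket, use $C_{\nu_1}^{-1}-C_{\nu_2}^{-1}=C_{\nu_1}^{-1}(C_{\nu_2}-C_{\nu_1})C_{\nu_2}^{-1}$. Then $\|C_{\nu_1}-C_{\nu_2}\|$ is bounded by taking an optimal coupling $(X_1,X_2)$ of $\nu_1,\nu_2$ and writing $\bb E[X_1X_1^\top-X_2X_2^\top]$, which Cauchy--Schwarz bounds by $(\nu_1(|\cdot|^2)^{1/2}+\nu_2(|\cdot|^2)^{1/2})\bb W_2(\nu_1,\nu_2)$. Likewise the difference of $\int x\,b(A^\top x)^\top\nu_i(\rrm dx)$ equals $\bb E[X_1b(A^\top X_1)^\top-X_2b(A^\top X_2)^\top]$, and by (H7) and linear growth this is bounded by $C(1+\nu_1(|\cdot|^2)+\nu_2(|\cdot|^2))^{1/2}\bb W_2$. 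Gronwall then gives
\[
\|\Delta^U_t\|_{\rrm{HS}}^2\le C_1(t)\,\bb W_2^2(\nu_1,\nu_2).
\]
This bound is deterministic and independent of $\zeta$.

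Next I would treat $\Delta^Y_t=Y^{(\nu_1)}_t-Y^{(\nu_2)}_t$ with Itô's formula applied to $|\Delta^Y_t|^2$. The drift difference $U^{1}b(U^{1\top}Y^1)-U^2b(U^{2\top}Y^2)$ is bounded by $C(|\Delta^Y|+\|\Delta^U\|_{\rrm{HS}}(1+|Y^2|))$, using (H7), linear growth, and $\|U^i\|=\sqrt r$. The diffusion difference is handled the same way via (H3) and (H4); (H4) follows from (H3) in the time-independent case. Localize with stopping times and take expectations. The moment bound of Lemma~\ref{bound} with deterministic initial datum $\zeta$ gives $\sup_s\bb E|Y^{(\nu_2)}_s|^2\le C(1+|\zeta|^2)$. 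Combining these yields
\[
\bb E|\Delta^Y_t|^2\le C\int_0^t\big(\bb E|\Delta^Y_s|^2+\|\Delta^U_s\|^2_{\rrm{HS}}(1+|\zeta|^2)\big)\,\rrm ds,
\]
and Gronwall together with the $U$-bound gives \eqref{eqn:nu1nu2}.

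The main obstacle is the $U$-estimate: obtaining a Lipschitz dependence on $\nu$ in $\bb W_2$ for the nonlinear integral $\int x\,b(A^\top x)^\top\nu(\rrm dx)$ and for $C_\nu^{-1}$, with constants controlled only by $\kappa$ and the second moments of $\nu_1,\nu_2$. I would handle it with the optimal-coupling representation described above, which reduces everything to Cauchy--Schwarz. Note that the constant will grow with $t$, so the bound is only finite-horizon, consistent with the dependence $C(t)$ in the statement.
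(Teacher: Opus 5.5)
Your proposal is correct and follows essentially the same route as the paper. The paper also first proves a deterministic Gronwall estimate for $U^{(\nu_1)}_t-U^{(\nu_2)}_t$, built from three ingredients that you merely regroup into two brackets: the $C_\nu^{-1}$-difference, the coupling estimate for $\int x\,b(A^\top x)^\top\nu(\rrm dx)$, and the projection difference (the paper's $J_1,J_2,J_3$). It then runs a Gronwall estimate for $Y^{(\nu_1)}_t-Y^{(\nu_2)}_t$ using the moment bound of Lemma~\ref{bound}. The one noteworthy deviation is your direct optimal-coupling bound $\|C_{\nu_1}-C_{\nu_2}\|\le(\nu_1(|\cdot|^2)^{1/2}+\nu_2(|\cdot|^2)^{1/2})\,\bb W_2(\nu_1,\nu_2)$. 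This is more elementary and self-contained than the paper's passage through $C_{\nu_1}^{1/2}-C_{\nu_2}^{1/2}$.
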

		\begin{proof}
		In order to derive Eqn.~\eqref{eqn:nu1nu2}, we establish the following preliminary estimates.
        Let $J_1$, $J_2$, $J_3$, $I_1$ and $I_2$ be defined by 
        \begin{equation}\label{eqns:J}
            \begin{aligned}
                &J_1:=\|C^{-1}_{\nu_1}-C^{-1}_{\nu_2}\|^2_\rrm{HS}\int_0^t\Big|\int_{\bb R^r}x b(U_s^{(\nu_1)\top} x)^\top\nu_1(\rrm dx)(I_{d}-P_{U^{(\nu_1)}_s})\Big|^2\rrm ds;\\
                &J_2:=\|C^{-1}_{\nu_2}\|_\rrm{HS}^2\int_0^t\Big|\Big(\int_{\bb R^r}x b(U_s^{(\nu_1)\top} x)^\top\nu_1(\rrm dx)-\int_{\bb R^r}x b(U_s^{(\nu_2)\top} x)^\top\nu_2(\rrm dx)\Big)(I_{d}-P_{U^{(\nu_1)}_s})\Big|^2\rrm ds;\\&
                J_3:=|C^{-1}_{\nu_2}|^2\int_0^t\Big\|\int_{\bb R^r}x b(U_s^{(\nu_2)\top} x)^\top\nu_2(\rrm dx)(P_{U^{(\nu_2)}_s}-P_{U^{(\nu_1)}_s})\big)\Big\|_\rrm{HS}^2\rrm ds;\\&
                I_1:=\big|U_t^{(\nu_1)}b(U_t^{(\nu_1)\top} Y^{(\nu_1)}_t)-U_t^{(\nu_2)}b(U_t^{(\nu_2)\top} Y^{(\nu_2)}_t)\big|^2;\\&
                I_2:=\big\|U_t^{(\nu_1)}\sigma(U_t^{(\nu_1)\top} Y^{(\nu_1)}_t)-U_t^{(\nu_2)}\sigma(U_t^{(\nu_2)\top} Y^{(\nu_2)}_t)\big\|_\rrm{HS}^2.
            \end{aligned}
        \end{equation}
        Since $\nu_1,\nu_2\in\scr P^\kappa_2(\bb R^r)$, where $\scr P_2^\kappa(\bb R^r)$ is given in Eqn.~\eqref{eqn:P_2k}, it holds that $$\|C^{-1}_{\nu_1}\|_{\bb R^r\to\bb R^r}\le \frac{1}{\kappa},\, \|C^{-1}_{\nu_2}\|_{\bb R^r\to\bb R^r}\le \frac{1}{\kappa}.$$ Moreover, we have the estimate $$\|C^{-1}_{\nu_1}-C^{-1}_{\nu_2}\|^2_\rrm{HS}\le\|C^{-1}_{\nu_1}\|_{\bb R^r\to\bb R^r}^2\|C^{-1}_{\nu_2}\|_{\bb R^r\to\bb R^r}^2\|C_{\nu_1}-C_{\nu_2}\|_\rrm {HS}^2,$$ 
		and $$\|C_{\nu_1}-C_{\nu_2}\|_\rrm {HS}^2\le\|C^{1/2}_{\nu_1}+C^{1/2}_{\nu_2}\|_{\bb R^r\to\bb R^r}^2\|C^{1/2}_{\nu_1}-C^{1/2}_{\nu_2}\|_\rrm {HS}^2.$$
        Furthermore, the first factor on the right-hand side can be bounded by
        $$\|C^{1/2}_{\nu_1}+C^{1/2}_{\nu_2}\|_{\bb R^r\to\bb R^r}^2\le2(\nu_1(|\cdot|^2)+\nu_2(|\cdot|^2)), $$
        which yields
        $$\|C_{\nu_1}-C_{\nu_2}\|_\rrm {HS}^2\le 2(\nu_1(|\cdot|^2)+\nu_2(|\cdot|^2))\bb W_2^2(\nu_1,\nu_2).$$
        Then, applying H\"older's inequality, (H7) and the projection property of $P_{U^{(\nu_1)}_s}$, we obtain 
        \begin{equation}\label{eq:J1}
            J_1\le \frac{2tL^2_{b,6}(\nu_1(|\cdot|^2)+\nu_2(|\cdot|^2))(\nu_1(|\cdot|^2)+1)^2}{\kappa^4}\bb W_2^2(\nu_1,\nu_2).
        \end{equation}
        
		For $J_2$, we proceed by using the definition of coupling. Let $\pi\in\Pi(\nu_1,\nu_2)$ be an optimal coupling \cite[Theorem 6.2.4]{ambrosio2005gradient}. Then there exist random variables $Z_1$ and $Z_2$ such that $\scr L_{Z_1}=\nu_1, \scr L_{Z_2}=\nu_2$ and 
       $
            \bb W_2^2(\nu_1,\nu_2)=\bb E[|Z_1-Z_2|^2].
       $
        Therefore, (H7) and H\"older's inequality yield that
        \begin{equation}
            \begin{aligned}
                &\Big|\int_{\bb R^r}x b(U_s^{(\nu_1)\top} x)^\top\nu_1(\rrm dx)-\int_{\bb R^r}x b(U_s^{(\nu_2)\top} x)^\top\nu_2(\rrm dx)\Big|\\&=\big|\bb E\big[Z_1b(U_s^{(\nu_1)\top}Z_1)^\top-Z_2b(U_s^{(\nu_2)\top}Z_2)^\top\big]\big|\\&\le \bb E\big[|Z_1b(U_s^{(\nu_1)\top}Z_1)^\top-Z_2b(U_s^{(\nu_2)\top}Z_2)^\top|\big]\\&\le 
                \bb E\big[|Z_1|| b(U_s^{(\nu_1)\top}Z_1)-b(U_s^{(\nu_2)\top} Z_2)|\big]+\bb E\big[|Z_1-Z_2|| b(U_s^{(\nu_2)\top} Z_2)|\big]\\&\le 
                L_{b,6}\bb E\big[|Z_1|^2\big]|U_s^{(\nu_1)}-U_s^{(\nu_2)}|+ L_{b,6}\bb E\big[|Z_1||Z_1-Z_2|\big]+L_{b,6}\bb E\big[|Z_1-Z_2|(1+|Z_2|)\big]\\&\le L_{b,6}\nu_1(|\cdot|^2)\|U_s^{(\nu_1)}-U_s^{(\nu_2)}\|_\rrm{HS}+(L_{b,6}\nu_1(|\cdot|^2)^\frac{1}{2}+2L_{b,6}+2L_{b,6}\nu_2(|\cdot|^2)^\frac{1}{2})\bb W_2(\nu_1,\nu_2).
            \end{aligned}
        \end{equation}
        This, together with the second equation of Eqn.~\eqref{eqns:J}, implies
        \begin{equation}\label{eq:J2}
            \begin{aligned}
                J_2\le \frac{4}{\kappa^2r}\Big(L^2_{b,6}\nu_1(|\cdot|^2)^2\int_0^t\|U_s^{(\nu_1)}-U_s^{(\nu_2)}\|_\rrm{HS}^2\rrm ds+t(L^2_{b,6}\nu_1(|\cdot|^2)+4L^2_{b,6}+4L^2_{b,6}\nu_2(|\cdot|^2))\bb W^2_2(\nu_1,\nu_2)\Big).
            \end{aligned}
        \end{equation}
        Furthermore, from (H3) we obtain
        \begin{equation}\label{eq:J3}
            J_3\le \frac{L_{b,6}}{\kappa^2}(\nu_2(|\cdot|^2)+1)^2\int_0^t\|U_s^{(\nu_1)}-U_s^{(\nu_2)}\|_\rrm{HS}^2\rrm ds.
        \end{equation}
        Using assumptions (H3) and (H7), it follows that
        \begin{equation}\label{eq:lip}
            \begin{aligned}
                I_1=&\big|U_t^{(\nu_1)}b(U_t^{(\nu_1)\top} Y^{(\nu_1)}_t)-U_t^{(\nu_2)}b(U_t^{(\nu_2)\top} Y^{(\nu_2)}_t)\big|^2\\&\le 2\big|U_t^{(\nu_1)}-U_t^{(\nu_2)}\big|^2\big|b(U_t^{(\nu_1)\top} Y^{(\nu_1)}_t)\big|^2+2\big|U_t^{(\nu_2)}\big|^2\big|b(U_t^{(\nu_1)\top} Y^{(\nu_1)}_t-b(U_t^{(\nu_2)\top} Y^{(\nu_2)}_t)\big|^2\\&\le 4L^2_{b,6}\big(\big|Y^{(\nu_1)}_t\big|^2+1\big)\big|U_t^{(\nu_1)}-U_t^{(\nu_2)}\big|^2+2L^2_{b,6}\big|U_t^{(\nu_1)\top}Y^{(\nu_1)}_t-U_t^{(\nu_2)\top}Y^{(\nu_2)}_t\big|^2\\&\le 
                8L^2_{b,6}\big(\big|Y^{(\nu_1)}_t\big|^2+1\big)\big\|U_t^{(\nu_1)}-U_t^{(\nu_2)}\big\|^2_\rrm{HS}+4L^2_{b,6}\big|Y^{(\nu_1)}_t-Y^{(\nu_2)}_t\big|^2.
            \end{aligned}
        \end{equation}
		A completely analogous estimate holds for $I_2$, namely,  
        \begin{equation}\label{eqn:diff}
            \begin{aligned}
            I_2=&\big\|U_t^{(\nu_1)}\sigma(U_t^{(\nu_1)\top} Y^{(\nu_1)}_t)-U_t^{(\nu_2)}\sigma(U_t^{(\nu_2)\top} Y^{(\nu_2)}_t)\big\|_\rrm{HS}^2\\&\le 8L^2_{\sigma,1}\big(\big|Y^{(\nu_1)}_t\big|^2+1\big)\big\|U_t^{(\nu_1)}-U_t^{(\nu_2)}\big\|^2_\rrm{HS}+4L^2_{\sigma,1}\big|Y^{(\nu_1)}_t-Y^{(\nu_2)}_t\big|^2.
            \end{aligned}
            \end{equation}
       We now estimate Eqn.~\eqref{eqn:nu1nu2}. By adding and subtracting an appropriately chosen intermediate term, we decompose the original difference into three components. Combining Eqns.~\eqref{eq:J1}, \eqref{eq:J2} and \eqref{eq:J3}, we deduce that there exists a constant $C(\kappa,b,\nu_1(|\cdot|^2),\nu_2(|\cdot|^2))>0$ such that
        \begin{align*}
            &\|U^{(\nu_1)}_t-U_t^{(\nu_2)}\|_\rrm{HS}^2\\&=\Big\|\int_0^t\Big(C^{-1}_{\nu_1}\int_{\bb R^r}x b(U_s^{(\nu_1)\top} x)^\top\nu_1(\rrm dx)(I_{d}-P_{U^{(\nu_1)}_s})-C^{-1}_{\nu_2}\int_{\bb R^r}x b(U_s^{(\nu_2)\top} x)^\top\nu_2(\rrm dx)(I_{d}-P_{U^{(\nu_2)}_s})\Big)\rrm ds\Big\|^2_\rrm{HS}\\&\le 3tJ_1+3tJ_2+3tJ_3\\&\le t^2C(\kappa,b,\nu_1(|\cdot|^2),\nu_2(|\cdot|^2))\bb W_2^2(\nu_1,\nu_2)+tC(\kappa,b,\nu_1(|\cdot|^2),\nu_2(|\cdot|^2))\int_0^t\|U_s^{(\nu_1)}-U_s^{(\nu_2)}\|_\rrm{HS}^2\rrm ds.
        \end{align*}
        By Gr\"onwall's lemma, we obtain
        \begin{equation}\label{eq:Ubou}
           \|U^{(\nu_1)}_t-U_t^{(\nu_2)}\|_\rrm{HS}^2\le C(\kappa,t,b,\nu_1(|\cdot|^2),\nu_2(|\cdot|^2))\bb W_2^2(\nu_1,\nu_2).
        \end{equation}
        By H\"older's inequality, It\^o's isometry, together with Eqns.~\eqref{eq:lip}, \eqref{eqn:diff} as well as \eqref{eq:Ubou}, there exists a constant $C(\kappa,t,b,\sigma,\nu_1(|\cdot|^2),\nu_2(|\cdot|^2))$ such that 
			\begin{equation*}
				\begin{aligned}
					\bb E[|Y_t^{(\nu_1)}-Y_t^{(\nu_2)}|^2]
                    &\le 2t\int_0^t\bb E\big[\big|U_s^{(\nu_1)}b(U_s^{(\nu_1)\top} Y^{(\nu_1)}_s)-U_s^{(\nu_2)}b(U_s^{(\nu_2)\top} Y^{(\nu_2)}_s)\big|^2\big]\rrm ds\\&\quad+2\int_0^t\bb E\big[\big\|U_s^{(\nu_1)}\sigma(U_s^{(\nu_1)\top} Y^{(\nu_1)}_s)-U_s^{(\nu_2)}\sigma(U_s^{(\nu_2)\top} Y^{(\nu_2)}_s)\big\|_\rrm{HS}^2\big]\rrm ds\\&\le C(\kappa,t,b,\sigma,\nu_1(|\cdot|^2),\nu_2(|\cdot|^2))\bb W_2^2(\nu_1,\nu_2)\Big(\int_0^t\bb E[\big|Y^{(\nu_1)}_s\big|^2]\rrm ds+1\Big)\\&\quad+4(L^2_{b,6}+L^2_{\sigma,1})\int_0^t\bb E\big[\big|Y^{(\nu_1)}_s-Y^{(\nu_2)}_s\big|^2\big]\rrm ds.
				\end{aligned}
			\end{equation*}
            From Lemma~\ref{bound}, Gr\"onwall's inequality implies that  there exists a constant $C(\kappa,t,b,\sigma,\nu_1(|\cdot|^2),\nu_2(|\cdot|^2))>0$ such that
            \begin{equation}
                \bb E[|Y_t^{(\nu_1)}-Y_t^{(\nu_2)}|^2]\le C(\kappa,t,b,\sigma,\nu_1(|\cdot|^2),\nu_2(|\cdot|^2))(1+|\zeta|^2)\bb W_2^2(\nu_1,\nu_2).
            \end{equation}
			
			Consequently, we arrive at the desired estimate 
			\begin{equation*}
				\bb E\big[\|(U_t^{(\nu_1)},Y_t^{(\nu_1)})-(U_t^{(\nu_2)},Y_t^{(\nu_2)})\|^2\big]\le C(\kappa,t,b,\sigma,\nu_1(|\cdot|^2),\nu_2(|\cdot|^2))(1+|\zeta|^2)\bb W_2^2(\nu_1,\nu_2),
			\end{equation*}
			which completes the proof.
		\end{proof}
		\begin{lemma}\label{le.feller}
			Assume Assumptions (H3), (H7) and  (H8) hold. Moreover, suppose Assumption~\ref{AS:relation2} holds. Let $\nu\in\scr P_2^\kappa(\bb R^r)$, $(\xi,\zeta),\,(\tilde \xi,\tilde \zeta)\in V_r(\bb R^d)\times\bb R^r$ be arbitrary. Furthermore,  let $(U^{{(\nu)}},Y^{(\nu)})\, (resp.\, (\widetilde U^{(\nu)},\widetilde Y^{(\nu)})\,)$ be the solutions to Eqn.~\eqref{eq.FrozenDO}	with initial condition $(\xi,\zeta)$ (resp. $(\tilde \xi,\tilde \zeta)$). Then, for every $t\ge0$, there exists a constant $C=C(\kappa,t,b,\sigma,\nu(|\cdot|^2))>0$ such that    
			$$\bb E\big[\|(U_{t}^{(\nu)},Y_t^{(\nu)})-(\widetilde U_t^{(\nu)},\widetilde Y_t^{(\nu)})\|^2\big]\le C(1+|\zeta|^2)\big(\|\xi-\tilde\xi\|_\rrm{HS}^2+|\zeta-\tilde\zeta|^2\big).$$ In particular, $(U_t^{(\nu)},Y_t^{(\nu)})_{t\ge0}$ is a Feller process on $V_r(\bb R^d)\times \bb R^r$ as defined in \cite[Page 20]{da1996ergodicity}. 
		\end{lemma}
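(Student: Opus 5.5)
The plan is to run a synchronous coupling of the two frozen solutions and mimic the proof of Lemma~\ref{eq.continuous}, now with the same frozen measure $\nu$ and different initial data. Because $\nu$ is fixed, the $U$-equation of \eqref{eq.FrozenDO} is a deterministic ODE on $V_r(\bb R^d)$ that does not involve $Y$:
\[
\frac{\rrm d U_t^{(\nu)}}{\rrm dt}=\psi_\nu(U_t^{(\nu)}),\qquad \psi_\nu(A):=C_\nu^{-1}\int_{\bb R^r}x\,b(A^\top x)^\top\nu(\rrm dx)\,(I_d-A^\top A).
\]
So $U$ and $\widetilde U$ differ only through $\xi$ versus $\tilde\xi$, and I treat the two components separately.

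\textbf{Step 1: the $U$-component.} I first show that $\psi_\nu$ is globally Lipschitz on $V_r(\bb R^d)$ (in fact on the bounded set $\{\|A\|_\rrm{HS}^2\le r\}$), with a constant depending only on $\kappa$, $L_{b,6}$ and $\nu(|\cdot|^2)$.
\begin{itemize}
\item By (H7), $\big|\int x\,(b(A^\top x)-b(B^\top x))^\top\nu(\rrm dx)\big|\le L_{b,6}\,\nu(|\cdot|^2)\,\|A-B\|_\rrm{HS}$.
\item By linear growth, $\big|\int x\, b(A^\top x)^\top\nu(\rrm dx)\big|\le C(1+\nu(|\cdot|^2))$.
\item $\|A^\top A-B^\top B\|_\rrm{HS}\le 2\sqrt r\,\|A-B\|_\rrm{HS}$ on this set.
\item $\|C_\nu^{-1}\|\le 1/\kappa$.
\end{itemize}
These are the same ingredients used for $J_2$ and $J_3$. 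Gr\"onwall's inequality then gives the deterministic bound $\|U_t^{(\nu)}-\widetilde U_t^{(\nu)}\|_\rrm{HS}^2\le \rrm e^{Ct}\|\xi-\tilde\xi\|^2_\rrm{HS}$.

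\textbf{Step 2: the $Y$-component.} I write the difference $Y_t^{(\nu)}-\widetilde Y_t^{(\nu)}$ as $\zeta-\tilde\zeta$ plus the drift integral plus the stochastic integral. I then apply H\"older's inequality and It\^o's isometry, and control the integrands by the analogues of \eqref{eq:lip} and \eqref{eqn:diff}, namely
\[
I_1+I_2\le C\big(|Y_s^{(\nu)}|^2+1\big)\|U_s^{(\nu)}-\widetilde U_s^{(\nu)}\|_\rrm{HS}^2+C|Y^{(\nu)}_s-\widetilde Y^{(\nu)}_s|^2 .
\]
The obstacle is the product term $|Y_s|^2\|U_s-\widetilde U_s\|^2$. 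Step 1 makes the $U$-difference deterministic, so it factors out of the expectation. Lemma~\ref{bound} with $p=2$ then gives $\bb E|Y_s^{(\nu)}|^2\le |\zeta|^2+C$ uniformly in $s$. This yields
\[
\bb E|Y_t-\widetilde Y_t|^2\le 3|\zeta-\tilde\zeta|^2+C_t(1+|\zeta|^2)\|\xi-\tilde\xi\|^2_\rrm{HS}+C_t\int_0^t\bb E|Y_s-\widetilde Y_s|^2\,\rrm ds.
\]
Gr\"onwall's inequality closes the estimate, and adding Step 1 gives the stated bound with constant $C(\kappa,t,b,\sigma,\nu(|\cdot|^2))$. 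Lemma~\ref{bound} is stated under the $p$-moment assumption, but the $p=2$ bound follows from it via Jensen's inequality, or from the same It\^o computation as in Theorem~\ref{bounded}.

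\textbf{Step 3: the Feller property.} Take $f\in C_\rrm b(V_r(\bb R^d)\times\bb R^r)$ and initial points $(\tilde\xi,\tilde\zeta)\to(\xi,\zeta)$. The estimate gives $(\widetilde U_t,\widetilde Y_t)\to(U_t,Y_t)$ in $L^2$, because the factor $1+|\zeta|^2$ involves only the fixed limit point. Hence the convergence also holds in probability, and bounded convergence gives $\bb E f(\widetilde U_t,\widetilde Y_t)\to\bb E f(U_t,Y_t)$, so $P_t^{(\nu)}f$ is continuous. Measurability of $P_t^{(\nu)}f$ and the Markov property come from pathwise uniqueness of \eqref{eq.FrozenDO} in the standard way, which yields the Feller property in the sense of \cite{da1996ergodicity}.
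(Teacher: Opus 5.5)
Your proposal is correct and follows essentially the same route as the paper, which likewise derives a deterministic bound $\|U_t^{(\nu)}-\widetilde U_t^{(\nu)}\|_{\rrm{HS}}^2\le C\|\xi-\tilde\xi\|_{\rrm{HS}}^2$ by the argument of Lemma~\ref{eq.continuous}, then $\bb E|Y_t^{(\nu)}-\widetilde Y_t^{(\nu)}|^2\le C|\zeta-\tilde\zeta|^2+C\|\xi-\tilde\xi\|_{\rrm{HS}}^2\int_0^t(1+\bb E|Y_s^{(\nu)}|^2)\,\rrm ds$, closes the estimate with Lemma~\ref{bound}, and obtains the Feller property from $L^2$-convergence of solutions with converging initial data. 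Your explicit remark that the frozen $U$-equation is a $Y$-independent deterministic ODE, so that the $U$-difference factors out of the product term $|Y_s|^2\|U_s-\widetilde U_s\|_{\rrm{HS}}^2$, is exactly what justifies the paper's terse ``arguing as in'' step.
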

		\begin{proof}
			Arguing as in the proof of Lemma~\ref{eq.continuous}, we obtain for any fixed $t\ge0$, there exists a constant $C=C(t,\kappa,b,\sigma,\nu(|\cdot|^2))>0$ such that
            \begin{equation}\label{eqn:Unu}
				\begin{aligned}
					\| U_{t}^{(\nu)}-\widetilde U_t^{(\nu)}\|_\rrm{HS}^2]\le C\|\xi-\tilde\xi\|_\rrm{HS}^2,
				\end{aligned}
			\end{equation}
            and 
			\begin{equation}\label{eqn:Ynu}
				\begin{aligned}
					\bb E[|Y_{t}^{(\nu)}-\widetilde Y_t^{(\nu)}|^2]\le C|\zeta-\tilde\zeta|^2+C\|\xi-\tilde\xi\|_\rrm{HS}^2\int_{0}^{t}(1+\bb E[|Y_{s}^{(\nu)}|^2])\rrm ds.
				\end{aligned}
			\end{equation}	
			Combining Lemma~\ref{bound} with Eqns.~\eqref{eqn:Unu} and \eqref{eqn:Ynu}, we obtain
			\begin{equation}
					\bb E\big[\|(U_{t}^{(\nu)},Y_t^{(\nu)})-(\widetilde U_t^{(\nu)},\widetilde Y_t^{(\nu)})\|^2\big]\le C(1+|\zeta|^2)\big(\|\xi-\tilde\xi\|_\rrm{HS}^2+|\zeta-\tilde\zeta|^2\big).
			\end{equation}
			  We conclude the desired result. 

            Besides, since $(U^{(\nu)}_t,Y^{(\nu)}_t)_{t\ge0}$ is a continuous Markov process, for any bounded continuous function $f:V_r(\bb R^d)\times \bb R^r\to\bb R$, let $(v_n,y_n)$ be a sequence in $V_r(\bb R^d)\times\bb R^r$ such that $(v_n,y_n)\to (v,y)$ as $n\to\infty$. Furthermore, let $(U^{{(\nu,n)}},Y^{(\nu,n)})$ (resp.$(U^{(\nu)},Y^{(\nu)})$) be the solutions to Eqn.~\eqref{eq.FrozenDO}	with initial data $(v_n,y_n)$ (resp. $(v,y)$). Then
            \begin{equation}
                \lim_{n\to\infty}\bb E\big[\|(U_{t}^{(\nu,n)},Y_{t}^{(\nu,n)})-(U_{t}^{(\nu)},Y_{t}^{(\nu)})\|^2\big]=0.
            \end{equation}
            It follows that $Y_{t}^{(\nu,n)}\to Y_t^{(\nu)}$ in distribution and $U_{t}^{(\nu,n)}\to U_t^{(\nu)}$. Since $f$ is bounded continuous on $V_r(\bb R^d)\times \bb R^r$, we have  
            \begin{align*}
                \lim_{n\to\infty}|\al P^{(\nu)}_tf(v_n,y_n)-\al P^{(\nu)}_tf(v,y)|=\lim_{n\to\infty}\bb E\big|f(U_t^{(\nu,n)},Y_t^{(\nu,n)})-f(U_t^{(\nu)},Y_t^{(\nu)})\big|=0.
            \end{align*}
            Therefore, $(U_t^{(\nu)},Y_t^{(\nu)})_{t\ge0}$ is a Feller process on $V_r(\bb R^d)\times \bb R^r$. 
		\end{proof}
        By Lemma~\ref{le.feller}, the process $(U^{(\mu)}_t, Y^{(\mu)}_t)_{t\ge0}$ is Feller on the space $V_r(\bb R^d)\times \bb R^r$ equipped with the norm~\eqref{eqn:norm}. We now set
        \begin{equation}\label{eqn:M}
            M:=\frac{2C(b,\sigma,p)}{\lambda p}+r^\frac{p}{2},
        \end{equation}
		where $C(b,\sigma,p)$ and $\lambda$ are constants appearing in Lemma~\ref{bound}. Recall that the space $\scr P^\kappa_2(\bb R^{r})$ is defined by 
        $$\scr P_{2}^{\kappa}(\bb R^r)=\Big\{\mu\in \scr P_2(\bb R^r): C_\mu\succeq \kappa I_{r\times r}\Big\},$$
        where 
        \begin{equation}\label{eqn:kappa}
            \kappa=\frac{L^2_{\sigma,3}}{4L_{b,6}(1+M^{2/p})^2}\wedge\frac{M^{2/p}}{2}>0.
        \end{equation}
        Define a subspace $\al K_M^\kappa$ of $\scr P_2^\kappa(\bb R^r)$ by 
        \begin{equation}\label{eqn:K}
            \al K^\kappa_M=\{\mu\in\scr P^\kappa_2(\bb R^r):\mu(|\cdot|^p)\le M\}.
        \end{equation}
        Note that $\al K^\kappa_M$ is not a vector space (i.e., it is not closed under addition or scalar multiplication). We have to suitably enlarge  the space $ \al K^\kappa_M$ such that the resulting space is a Banach space, and the restriction of the fixed point map coincides with a subset of $ \al K^\kappa_M$. Define 
		\begin{align*}
			\scr P_2:=\{\mu: \mu \text{ is a finite signed measure on  $\bb R^r$  satisfying }|\mu|(|\cdot|^2)<\infty \},
		\end{align*}
		where $|\mu|$ is the variation of the signed measure $\mu$. The space $\scr P_2$ is a Banach space under the Kantorovich-Rubinstein metric~\cite{bao2022existence}:
		\begin{align*}
			\al W_2\big(\mu_1,\mu_2\big)&:=\Big(\sup_{(f,g)\in \scr F_{\rrm{Lip}}(\bb R^r)}\Big|\int_{\bb R^r}f(x)\mu_1(\rrm dx)-\int_{\bb R^r}g(x)\mu_2(\rrm dx)\Big|\Big)^\frac{1}{2}, \text{ for any }\mu_1,\,\mu_2\in\scr P_2
		\end{align*}	
		where 
		\begin{align*}
			\scr F_{\mathrm{Lip}}
            (\bb R^r):=\{(f,g): f,g \text{ are Lipschitz functions on }\bb R^r \text{ such that }f(x)\le g(y)+ |x-y|^2, \text{ for all }x,y\in\bb R^r\}.
		\end{align*}
       When $(\scr P_2,\al W_2)$ is restricted to $\scr P_2(\bb R^r)$, the metrics $\al W_2$ and $\bb W_2$ coincide; that is,
        \begin{equation}
            \al W_2(\mu_1,\mu_2) = \bb W_2(\mu_1,\mu_2)
            \qquad \text{for any } \mu_1,\mu_2 \in \scr P_2(\bb R^r).
        \end{equation}
		  We now show that the frozen DO system~\eqref{eq.FrozenDO} admits an invariant probability measure.
		\begin{lemma}\label{lem,IMP}
			Suppose Assumptions (H3), (H7) and (H8) hold. Moreover, suppose Assumption~\ref{AS:relation2} holds. Then the process $(U^{(\mu)}_t, Y^{(\mu)}_t)_{t\ge0}$, governed by the frozen DO system ~\eqref{eq.FrozenDO}, admits an invariant probability measure.
		\end{lemma}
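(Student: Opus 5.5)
We apply the Krylov--Bogoliubov theorem (Corollary 3.1.2 in \cite{da1996ergodicity}) to the frozen Markov process $(U^{(\mu)}_t,Y^{(\mu)}_t)_{t\ge0}$ on $E:=V_r(\bb R^d)\times\bb R^r$. Here $\mu\in\scr P_2^\kappa(\bb R^r)$ is fixed, so $C_\mu$ is a constant invertible matrix and \eqref{eq.FrozenDO} is an ordinary (non-McKean) time-homogeneous SDE. The theorem requires two inputs: the Feller property of the semigroup $\al P^{(\mu)}_t$, and tightness of the time-averaged laws started from one fixed point. We fix a deterministic initial condition $(\xi,\zeta)\in E$, for instance $\xi=U_0\in V_r(\bb R^d)$ arbitrary and $\zeta=0$. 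We then define the occupation measures
\[
\pi_T(A):=\frac1T\int_0^T \bb P\big((U^{(\mu)}_t,Y^{(\mu)}_t)\in A\big)\,\rrm dt,\qquad A\in\scr B(E),\ T>0.
\]

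\textbf{Step 1 (Markov and Feller).} Pathwise uniqueness for \eqref{eq.FrozenDO}, noted just before Lemma~\ref{bound}, gives the Markov property of the solution by the standard argument for time-homogeneous SDEs with unique strong solutions. Lemma~\ref{le.feller} provides $L^2$-continuity of $(U_t,Y_t)$ in the initial data, uniformly on bounded sets of $\zeta$. Hence $x\mapsto\al P^{(\mu)}_tf(x)$ is continuous for every $f\in C_{\rm b}(E)$, which is the Feller property. We also note that $V_r(\bb R^d)$ is a closed (indeed compact) subset of $\bb R^{r\times d}$, so $E$ is a Polish space and the abstract theorem applies.

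\textbf{Step 2 (Tightness).} By Lemma~\ref{bound}, $\|U_t^{(\mu)}\|^2_{\rrm{HS}}=r$ for all $t$, and $\sup_{t\ge0}\bb E|Y^{(\mu)}_t|^p\le |\zeta|^p+\frac{2C(b,\sigma,p)}{\lambda p}$, where $\lambda>0$ by Assumption~\ref{AS:relation2}. Therefore $\pi_T(\|\cdot\|^p)\le r^{p/2}+|\zeta|^p+\frac{2C}{\lambda p}$ uniformly in $T$. The sets $K_R:=V_r(\bb R^d)\times\{|y|\le R\}$ are compact, and Chebyshev's inequality gives $\pi_T(E\setminus K_R)\le R^{-p}\sup_T\pi_T(|y|^p)\to0$ as $R\to\infty$, uniformly in $T$. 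So $\{\pi_T\}_{T\ge1}$ is tight.

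\textbf{Step 3 (Conclusion).} By Prokhorov's theorem there exist $T_n\to\infty$ and $\pi^{(\mu)}\in\scr P(E)$ with $\pi_{T_n}\Rightarrow\pi^{(\mu)}$. The Krylov--Bogoliubov argument then shows $\al P^{(\mu)*}_s\pi^{(\mu)}=\pi^{(\mu)}$. Concretely, for $f\in C_{\rm b}(E)$,
\[
\pi_{T_n}(\al P^{(\mu)}_sf)-\pi_{T_n}(f)=\frac1{T_n}\Big(\int_{T_n}^{T_n+s}-\int_0^s\Big)\bb E f(U_t,Y_t)\,\rrm dt=O\big(2s\|f\|_\infty/T_n\big).
\]
Since $\al P^{(\mu)}_sf\in C_{\rm b}(E)$ by Step 1, weak convergence lets us pass to the limit on both sides. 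Lower semicontinuity of $\|\cdot\|^p$ under weak convergence additionally gives $\pi^{(\mu)}(\|\cdot\|^p)<\infty$, so $\pi^{(\mu)}\in\scr P_2(E)$; this will be useful for the later fixed-point step.

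The main obstacle is Step 1. Lemma~\ref{le.feller} is stated with constants depending on $t$ and on $(1+|\zeta|^2)$, so the continuity is only local in the initial data. This is enough for pointwise continuity of $\al P_tf$ when $f$ is bounded continuous: we use convergence in probability together with dominated convergence, and need no uniform control. The one point requiring care is that the $L^2$ estimates for the $U$-equation (the locally Lipschitz map $\psi$) are valid because $U_t$ stays on the compact manifold $V_r(\bb R^d)$. This confinement is exactly what Lemma~\ref{bound} yields via $\frac{\rrm d}{\rrm dt}(U_tU_t^\top)=O_r$, so we use it there to replace local Lipschitz constants by global ones on $V_r(\bb R^d)$.
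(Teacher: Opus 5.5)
Your proposal is correct and follows the paper's route. The paper also uses the uniform $p$-moment bound of Lemma~\ref{bound} to get tightness of the occupation measures, the Feller property from Lemma~\ref{le.feller}, and then Krylov--Bogoliubov; the only differences are cosmetic (you start from a deterministic point $(\xi,0)$ rather than the paper's initial law $\delta_{u_0}\otimes\nu$, and you write out the Krylov--Bogoliubov limit computation and the finite-moment property of the limit explicitly).
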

		\begin{proof}
            We now show that the family $\{\scr L_{(U_t^{(\mu)},Y^{(\mu)}_t)} : t \ge 0\}$ is tight. By Lemma~\ref{bound}, we have the uniform moment bound
$$
\sup_{t\ge0}\mathbb E\big[\|(U^{(\mu)}_t,Y^{(\mu)}_t)\|^p\big]\le M .
$$
Applying Chebyshev's inequality, it follows that for any $\varepsilon > 0$, one may choose $R > 0$ sufficiently large such that
			\begin{equation}
            \begin{aligned}
                \scr L_{(U^{(\mu)}_t, Y^{(\mu)}_t)}(\|(\cdot,\cdot)\|> R)&=\scr L_{(U^{(\mu)}_t, Y^{(\mu)}_t)}(\{(u,y)\in V_r(\bb R^d)\times \bb R^r:\|(u,y)\|>R\})\\&=\bb P(\|(U^{(\mu)}_t, Y^{(\mu)}_t)\|>R)\\&\le \frac{1}{R^p}\bb E\big[\|(U^{(\mu)}_t,Y^{(\mu)}_t)\|^p\big]\le \frac{M}{R^p}<\varepsilon,
            \end{aligned}
			\end{equation}
which holds uniformly in $t \ge 0$. Consequently, we obtain
\begin{equation}\label{eq:tight}
    \sup_{t\ge0}
\scr L_{(U^{(\mu)}_t,Y^{(\mu)}_t)}\big(\|(u,y)\|>R\big)<\varepsilon.
\end{equation}

This establishes the tightness of the family $\{\scr L_{(U^{(\mu)}_t, Y^{(\mu)}_t)};t\ge0\}$.
            
Let $(\delta_{u_0},\nu) \in \scr P(V_r(\bb R^d)) \times \mathcal K^\kappa_M$ be the initial distribution of $(U^{(\mu)}_0,Y^{(\mu)}_0)$, where $\delta_{u_0}$ denotes the Dirac measure at $u_0$ as defined in Eqn.~\eqref{eqn:dirac}, and let $P_t^{(\mu)}$ be the Markov semigroup generated by the process $(U^{(\mu)}_t,Y^{(\mu)}_t)$.
Define the family of probability measures 
$$
\pi^{(\mu)}_t:=\frac{1}{t}\int_0^t \al P_s^{*,(\mu)}(\delta_{u_0},\nu)\,\rrm ds,
$$
where $P_t^{*,(\mu)}$ denotes the adjoint of $P_t^{(\mu)}$. 
By Eqn.~\eqref{eq:tight} we have 
\begin{align*}
    \pi^{(\mu)}_t(\|(u,y)\|>R)=\frac{1}{t}\int_0^t\scr L_{(U^{(\mu)}_s,Y_s^{(\mu)})}(\|(u,y)\|>R)\, \rrm ds
    \le  \sup_{t\ge0}
\scr L_{(U^{(\mu)}_t,Y^{(\mu)}_t)}\big(\|(u,y)\|>R\big)<\varepsilon.
\end{align*}
Therefore, the family $\{\pi^{(\mu)}_t:t>0\}$ is also tight.
            By the Krylov--Bogoliubov theorem~\cite[Corollary 3.1.2]{da1996ergodicity}, since $(U^{(\mu)}_t, Y^{(\mu)}_t)$ generates a Markov semigroup $\al P_t^{(\mu)}$, there exists a sequence $t_n\uparrow+\infty$ such that the weak convergence
			\begin{equation}
				\pi^{(\mu)}_{t_n}\Rightarrow \pi^{(\mu)},\quad\text{ as } n\to \infty.
			\end{equation}
			Consequently, $\pi^{(\mu)}$ is an invariant probability measure for the semigroup
$P_t^{(\mu)}$.

		\end{proof}
For any $\mu\in\scr P_2^\kappa(\bb R^r)$, define the 
collection of invariant probability measures of the process $(U_t^{(\mu)}, Y_t^{(\mu)})_{t \ge 0}$, which solves Eqn.~\eqref{eq.FrozenDO}, by 
\begin{equation}\label{eqn:inv}
    \Lambda^\kappa_{\mu}=\Big\{\pi^{(\mu)}\in\scr P(V_r(\bb R^d)\times\bb R^r):\scr L_{(U_t^{(\mu)},Y_t^{(\mu)})}=\pi^{(\mu)},\, t\ge0\Big\},
\end{equation}

Next, define the second marginal (projection) of $\Lambda_\mu^\kappa$ by    
    \begin{equation}\label{eqn:Xi}
        \Xi^\kappa_\mu=\Big\{\nu\in \scr P^\kappa_2(\bb R^r): \text{there exists a }\pi\in\Lambda^\kappa_{\mu},\text{  s.t. }\nu(A)=\int_{V_r(\bb R^d)}\pi(\rrm dx,A),\text{ for any }A \in\scr B(\bb R^r)\Big\}.
    \end{equation}
By Lemma~\ref{lem,IMP}, we have $\Lambda^\kappa_{\mu} \neq \emptyset$, and consequently $\Xi^\kappa_\mu\neq\emptyset$. In the next lemma, we show that $\Xi_\mu^\kappa\subset\al K^\kappa_M$.
 
        \begin{lemma}\label{lem,bound}
			Suppose Assumptions (H3), (H5), (H7) and (H8) hold. Moreover, suppose Assumption~\ref{AS:relation2} holds. Then $\Xi_\mu^\kappa\subset\al K^\kappa_M$ for any $\mu\in\al K^\kappa_M$,
            where $\al K^\kappa_M$ is given in Eqn.~\eqref{eqn:K}.
		\end{lemma}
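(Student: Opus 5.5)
Fix $\mu\in\al K^\kappa_M$ and $\nu\in\Xi^\kappa_\mu$, so $\nu$ is the second marginal of some invariant measure $\pi\in\Lambda^\kappa_\mu$ of the frozen system \eqref{eq.FrozenDO}. We must show two things: the moment bound $\nu(|\cdot|^p)\le M$, and the covariance bound $C_\nu\succeq\kappa I_r$.

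\textbf{Step 1: the moment bound.} We would use invariance together with Lemma~\ref{bound}. Start the frozen system from $\pi$, so $\scr L_{(U_t^{(\mu)},Y_t^{(\mu)})}=\pi$ for all $t$. To avoid assuming a priori that $\pi$ has a finite $p$-th moment, apply Lemma~\ref{bound} with deterministic initial data $(u,y)$. Write $\phi_R(y)=|y|^p\wedge R$. Then
\[
\pi(\phi_R)=\int\bb E[\phi_R(Y_t^{u,y})]\,\pi(\rrm du,\rrm dy)\le\int\Big(\big(\rrm e^{-\lambda\frac p2 t}|y|^p+\tfrac{2C(b,\sigma,p)}{\lambda p}\big)\wedge R\Big)\,\pi(\rrm du,\rrm dy).
\]
Let $t\to\infty$ by dominated convergence (the integrand is bounded by $R$), then let $R\to\infty$. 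This gives $\nu(|\cdot|^p)\le \frac{2C(b,\sigma,p)}{\lambda p}\le M$, by the definition \eqref{eqn:M} of $M$. By Jensen, $\nu(|\cdot|^2)\le M^{2/p}$.

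\textbf{Step 2: the covariance bound.} This is the main obstacle, and (H5) is what drives it. Fix a unit vector $e\in\bb R^r$ and apply It\^o's formula to $\langle e,Y_t\rangle^2$ under the stationary law $\pi$. Since the left-hand side is stationary, its expected differential vanishes (justify this by localization, using the $p$-moment bound with $p>2$). This yields
\[
0=2\,\bb E\big[\langle e,Y\rangle\langle e,Ub(U^\top Y)\rangle\big]+\bb E\big[e^\top U\sigma\sigma^\top(U^\top Y)U^\top e\big].
\]
The key non-degeneracy input is that $U$ has orthonormal rows, so $|U^\top e|=1$. Hence by (H5) the second term is at least $L_{\sigma,3}$.

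For the first term, use Cauchy--Schwarz together with $|b(x)|\le L_{b,6}(1+|x|)$, which follows from (H7) after absorbing $|b(0)|$ into the constant. Since $|U^\top Y|=|Y|$, this gives
\[
\Big|2\,\bb E\big[\langle e,Y\rangle\langle e,Ub(U^\top Y)\rangle\big]\Big|\le 2L_{b,6}\,(e^\top C_\nu e)^{1/2}\big(1+\nu(|\cdot|^2)^{1/2}\big)\le 2L_{b,6}\,(e^\top C_\nu e)^{1/2}\big(1+M^{1/p}\big).
\]
Combining the two terms, $L_{\sigma,3}\le 2L_{b,6}(1+M^{1/p})(e^\top C_\nu e)^{1/2}$. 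Squaring, we get
\[
e^\top C_\nu e\ge \frac{L_{\sigma,3}^2}{4L_{b,6}^2(1+M^{1/p})^2}.
\]

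\textbf{Step 3: matching the constant in \eqref{eqn:kappa}.} The bound from Step 2 must be compared with the definition of $\kappa$. The constants there look slightly different ($L_{b,6}$ rather than $L_{b,6}^2$, and $M^{2/p}$ rather than $M^{1/p}$). We would handle this either by crude bounds, assuming for instance $L_{b,6}\ge1$ without loss of generality and using $(1+M^{1/p})^2\le 2(1+M^{2/p})\le(1+M^{2/p})^2$ when $M\ge1$, or by adjusting the Cauchy--Schwarz step so that it yields exactly the displayed $\kappa$. Either way, we conclude $C_\nu\succeq\kappa I_r$.

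Together with Step 1, this gives $\nu\in\scr P_2^\kappa(\bb R^r)$ with $\nu(|\cdot|^p)\le M$, that is $\nu\in\al K^\kappa_M$. The delicate points are the vanishing of the stochastic integral's expectation under $\pi$, which follows from the $p$-th moments, and this bookkeeping of constants.
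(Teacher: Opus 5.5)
Your Step 1 is the paper's argument: truncate by $|y|^p\wedge N$, use concavity and Lemma~\ref{bound} from deterministic starting points, then let $t\to\infty$ and $N\to\infty$.

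Step 2 takes a genuinely different route. The paper never uses $\frac{\mathrm d}{\mathrm dt}\mathbb{E}\langle e,Y_t\rangle^2=0$ under $\pi$. Instead it does the following:
\begin{enumerate}
\item It proves in Theorem~\ref{theo:inverse} a transient comparison inequality for $v^\top\mathbb{E}[Y_tY_t^\top]v$, for the frozen system started at a deterministic point, with a Young parameter $\varepsilon=\frac{L_{\sigma,3}}{2L_{b,6}(1+M^{2/p})}$.
\item It integrates the resulting lower bound $e^{-t/\varepsilon}e^\top yy^\top e+\varepsilon\frac{L_{\sigma,3}}{2}\bigl(1-\frac{|y|^2}{1+M^{2/p}}\bigr)(1-e^{-t/\varepsilon})$ against $\pi$.
\item It lets $t\to\infty$ by dominated convergence and inserts $\nu(|\cdot|^2)\le M^{2/p}$.
\end{enumerate}
The two factors of $1+M^{2/p}$ in \eqref{eqn:kappa} come from $\varepsilon$ and from $1-\frac{M^{2/p}}{1+M^{2/p}}$.

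You instead use the stationary balance $\pi(\mathcal{L}f)=0$ for $f(u,y)=\langle e,y\rangle^2$, together with $|U^\top e|=1$ and Cauchy--Schwarz. This is shorter and dispenses with the appendix result and with $\varepsilon$. Under the same growth convention it also gives a better constant: one factor of $1+M^{2/p}$ instead of two. The price is the justification you flag. For an unbounded test function you must show that the martingale part has mean zero and that $\mathbb{E}\langle e,Y_t\rangle^2$ is constant under $\pi$. This needs Step 1 first, so that $\nu(|\cdot|^2)<\infty$, plus a localization. For example, dominate by $\sup_{s\le t}|Y_s|^2$, which is integrable here since the $Y$-coefficients are Lipschitz in $y$ and the initial data is square integrable. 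The paper's route only ever applies moment estimates from deterministic initial data. Both routes are sound.

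The one thing that fails as written is the first option in Step 3. Assuming $L_{b,6}\ge1$ gives $L_{b,6}^2\ge L_{b,6}$. That makes your bound $\frac{L_{\sigma,3}^2}{4L_{b,6}^2(1+M^{1/p})^2}$ smaller, not larger, than the first term in \eqref{eqn:kappa}. For $M$ near $1$ (note $M\ge r^{p/2}\ge1$) the ratio is about $1/L_{b,6}<1$. The crude comparison would actually need $L_{b,6}\le1$, and that cannot be arranged without loss of generality, since a Lipschitz constant can only be enlarged.

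Use your second option instead. Do not pass through $|b(x)|\le L_{b,6}(1+|x|)$ and Minkowski. Bound directly
\[
\mathbb{E}|b(U^\top Y)|^2\le L_{b,6}\bigl(1+\nu(|\cdot|^2)\bigr)\le L_{b,6}(1+M^{2/p}),
\]
which is the convention used in the proof of Theorem~\ref{theo:inverse}. This gives $e^\top C_\nu e\ge\frac{L_{\sigma,3}^2}{4L_{b,6}(1+M^{2/p})}\ge\kappa$.

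Strictly, (H7) only yields $|b(x)|^2\le K(1+|x|^2)$ with $K$ depending on $L_{b,6}$ and $|b(0)|$. So $L_{b,6}$ in \eqref{eqn:kappa} should be read as such a $K$; your argument shares this imprecision with the paper.
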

        \begin{proof}
		Fix $\nu\in \Xi^\kappa_\mu$ arbitrarily. Then, by the definition of $\Xi_\mu^\kappa$, there exists an invariant probability measure $\pi \in \Lambda^\kappa_{\mu}$ for the process $(U_t^{(\mu)},Y_t^{(\mu)})_{t\ge 0}$ whose second marginal coincides with $\nu$. 
        Since $\pi$ is invariant for the process $(U_t^{(\mu)},Y_t^{(\mu)})_{t\ge 0}$, 
for every bounded measurable function 
$f: V_r(\mathbb R^d) \times \mathbb R^r \to \mathbb R$ and every $t \ge 0$, we have
        \begin{equation}\label{eqn:finv}
            \int f(u,y)\,\pi(\rrm du,\rrm dy)
        =
        \int \mathbb E\big[f(U_t^{(\mu),u,y},Y_t^{(\mu),u,y})\big]\,
       \pi(\rrm du,\rrm dy),
        \end{equation}
        where $(U_0^{(\mu),u,y},Y_0^{(\mu),u,y})=(u,y)$.

            For any $N>0$ and $p\ge 2$, choose $$f(u,y)=|y|^p\wedge N,$$
			so that $f$ is bounded. Since the function $z \mapsto z \wedge N$ is concave on $[0,\infty)$, Jensen’s inequality yields 
            \begin{equation}\label{eq:piy}
                \begin{aligned}
                    \int_{\bb R^r}\big(|y|^p)\wedge N\big)\nu(\rrm dy)&=\int_{V_r(\bb R^d)\times \bb R^r}\big(|y|^p)\wedge N\big)\pi(\rrm du,\rrm dy)\\&=\int_{V_r(\bb R^d)\times \bb R^r}\bb E\big[(|Y_t^{(\mu),u,y}|^p)\wedge N\big]\pi(\rrm du,\rrm dy)\\&\le \int_{V_r(\bb R^d)\times \bb R^r}\big(\bb E[|Y_t^{(\mu),u,y}|^p]\wedge N\big)\pi(\rrm du,\rrm dy).
                \end{aligned}
            \end{equation}
            By Lemma~\ref{bound}, it follows that
            \begin{equation}\label{eq:uybo}
                \begin{aligned}
                    \bb E[|Y_t^{(\mu),u,y}|^p]\le \rrm e^{-\lambda t}|y|^p+\frac{2C(b,\sigma,p)}{\lambda p}.
                \end{aligned}
            \end{equation}
            
            Letting $t \to +\infty$ and applying the Lebesgue dominated convergence theorem, together with Eqns.~\eqref{eq:piy} and \eqref{eq:uybo}, we obtain
            
            $$\int_{\bb R^r}\big((|y|^p)\wedge N\big)\nu(\rrm dy)\le \frac{2C(b,\sigma,p)}{\lambda p}.$$
            Passing to the limit as $N \to \infty$ and invoking the monotone convergence theorem, it follows that
            \begin{equation}\label{eqn:Ybound}
                \int_{\bb R^r}|y|^p\nu(\rrm dy)\le \frac{2C(b,\sigma,p)}{\lambda p}\le M.
            \end{equation}
           
    Next, we will show  $ C_\nu\succeq\kappa I_r$. Consider the smallest eigenvalue of $C_{\nu}$. For any unit vector $e\in\bb R^r$ and $N>0$, consider a bounded function
    $$f(x,y)=(e^\top yy^\top e)\wedge N.$$
    By Eqn.~\eqref{eqn:finv}, we have
            \begin{align*}
                \int_{\bb R^r} \big((e^\top yy^\top e)\wedge N\big)\nu(\rrm dy)&=\int_{V_r(\bb R^d)\times\bb R^r}\big((e^\top yy^\top e)\wedge N\big)\pi(\rrm du, \rrm dy)\\&
                =\int_{V_r(\bb R^d)\times\bb R^r}\big(\bb E\big[ (e^\top Y^{(\mu)}_tY^{(\mu)\top}_te)\wedge N\big]\big)\pi(\rrm du, \rrm dy).
                \end{align*}
    Taking $N \to\infty$, and applying the monotone convergence theorem, one has 
    \begin{equation*}
        \int_{\bb R^r} (e^\top yy^\top e)\nu(\rrm dy)
                =\int_{V_r(\bb R^d)\times\bb R^r}\big(\bb E[ e^\top Y^{(\mu)}_tY^{(\mu)\top}_te]\big)\pi(\rrm du, \rrm dy).
    \end{equation*}
    Then, Theorem~\ref{theo:inverse} yields 
    \begin{align*}
                &\int_{\bb R^r} (e^\top yy^\top e)\nu(\rrm dy)\\&
                =\int_{V_r(\bb R^d)\times\bb R^r}\big(\bb E[ e^\top Y^{(\mu)}_tY^{(\mu)\top}_te]\big)\pi(\rrm du, \rrm dy)\\&\ge\int_{V_r(\bb R^d)\times\bb R^r} \Big(\rrm e^{-\frac{t}{\varepsilon}}e^\top\bb E[Y^{(\mu)}_0Y^{(\mu)\top}_0]e+\varepsilon\frac{L_{\sigma,3}}{2}\Big(1-\frac{\big(\bb E[|Y_0^{(\mu)}|^p\big)^{2/p}}{1+M^{2/p}}\Big)(1-\rrm e^{-\frac{t}{\varepsilon}})\Big)\pi(\rrm du, \rrm dy)\\&=\int_{V_r(\bb R^d)\times\bb R^r} \Big(\rrm e^{-\frac{t}{\varepsilon}}e^\top yy^\top e+\varepsilon\frac{L_{\sigma,3}}{2}\Big(1-\frac{|y|^2}{1+M^{2/p}}\Big)(1-\rrm e^{-\frac{t}{\varepsilon}})\Big)\pi(\rrm du, \rrm dy),
                \end{align*}
            where $\varepsilon$ is given in Theorem~\ref{theo:inverse}.
                
    Then, taking $t\to+\infty$ and using
    the Lebesgue dominated convergence theorem, one has
    \begin{align}\label{eqn:posi}
        \int_{\bb R^r} (e^\top yy^\top e)\nu(\rrm dy)\ge \int_{V_r(\bb R^d)\times\bb R^r} \varepsilon\frac{L_{\sigma,3}}{2}\Big(1-\frac{|y|^2}{1+M^{2/p}}\Big)\pi(\rrm du, \rrm dy).
    \end{align}
    From Eqn.~\eqref{eqn:Ybound}, we obtain
    \begin{equation*}
        -\int_{V_r(\bb R^d)\times\bb R^r}|y|^2\pi(\rrm du, \rrm dy)=-\int_{\bb R^r}|y|^2\nu(\rrm dy)\ge -M^{2/p},
    \end{equation*}
    which, combined with Eqn.~\eqref{eqn:posi}, yields
    \begin{equation}\label{eqn:semidef}
                e^\top \int_{\bb R^r} yy^\top \nu(\rrm dy)e\ge\frac{L^2_{\sigma,3}}{4L_{b,6}(1+M^{2/p})^2}\ge\kappa.
    \end{equation}
           From Eqns.~\eqref{eqn:Ybound} and \eqref{eqn:semidef} we conclude that
            $$\Xi_\mu^\kappa\subset \al K_M^\kappa.$$
        \end{proof}
We will employ the Kakutani-Fan-Glicksberg fixed point theorem to prove Theorem \ref{inv}.
    \begin{theorem}[Kakutani-Fan-Glicksberg]\cite[Theorem 1]{fan1952fixed}\label{fixedpoint}
    Let $\scr P_2$ be a locally convex 
    Hausdorff topological vector space, and let $\al K_M^\kappa\subset \scr P_2$  be a  nonempty, compact and convex set. Suppose $f:\al K_M^\kappa\to 2^{\al K_M^\kappa}$  be a set-valued function on $\al K_M^\kappa$ (where $2^{\al K_M^\kappa}$ is the set of subsets of $\al K_M^\kappa$) satisfying the following conditions:
    \begin{itemize}
        \item $f(x)$ is nonempty and convex for every $x\in \al K_M^\kappa$.
        \item The graph $Gr(f) := \{(x, y) \in \al K_M^\kappa \times \al K_M^\kappa: y \in f(x)\}$ is a closed subset of $\al K_M^\kappa \times \al K_M^\kappa$.
    \end{itemize}
    Then, $f$ admits a fixed point $x^*\in \al K_M^\kappa$,i.e., $x^*\in f(x^*)$.
    \end{theorem}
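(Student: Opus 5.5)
The plan is to reduce the statement to Brouwer's fixed point theorem by finite-dimensional approximation, following Fan's original scheme.

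\textbf{Step 1: consequences of the closed graph.} Since $\al K_M^\kappa$ is compact and $Gr(f)$ is closed in $\al K_M^\kappa\times\al K_M^\kappa$, each value $f(x)$ is closed in a compact set, hence compact; by assumption it is also convex and nonempty. I would next show that $f$ is upper hemicontinuous: for every open $W\supset f(x)$ there is a neighbourhood $N$ of $x$ with $f(z)\subset W$ for all $z\in N\cap\al K_M^\kappa$. If this failed, there would be nets $z_\alpha\to x$ and $y_\alpha\in f(z_\alpha)\setminus W$. By compactness a subnet of $(y_\alpha)$ converges to some $y\notin W$. Closedness of the graph then gives $y\in f(x)\subset W$, a contradiction.

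\textbf{Step 2: approximate fixed points.} Fix a convex, balanced, open neighbourhood $V$ of $0$ in $\scr P_2$; local convexity supplies such a base. By compactness, cover $\al K_M^\kappa$ by finitely many sets $x_1+V,\dots,x_n+V$ with $x_i\in\al K_M^\kappa$. Take a continuous partition of unity $(\phi_i)$ on $\al K_M^\kappa$ subordinate to this cover; Hausdorffness and compactness make $\al K_M^\kappa$ normal, so this exists. Choose $y_i\in f(x_i)$ and define
\[
g_V(x):=\sum_{i=1}^n\phi_i(x)\,y_i .
\]
This map sends the convex hull $S:=\mathrm{co}\{x_1,\dots,x_n,y_1,\dots,y_n\}\subset\al K_M^\kappa$ continuously into itself. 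The set $S$ is a compact convex subset of a finite-dimensional subspace, so Brouwer's theorem gives $x_V\in S$ with $x_V=g_V(x_V)$. Thus $x_V$ is a convex combination of points $y_i\in f(x_i)$ whose base points satisfy $x_i\in x_V+V$, because $\phi_i(x_V)>0$ forces $x_V\in x_i+V$ and $V$ is balanced.

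\textbf{Step 3: passage to the limit (main obstacle).} Direct the neighbourhoods $V$ by reverse inclusion. By compactness the net $(x_V)$ has a cluster point $x^*\in\al K_M^\kappa$. Suppose, for contradiction, that $x^*\notin f(x^*)$. Since $f(x^*)$ is compact and convex and the space is locally convex Hausdorff, I would choose a convex open neighbourhood $U$ of $0$ such that $x^*+U$ and the convex open set $W:=f(x^*)+U$ are disjoint; one could instead use Hahn--Banach separation. Upper hemicontinuity from Step 1 gives a neighbourhood $N$ of $x^*$ with $f(z)\subset W$ for $z\in N$. Now pick $V$ so small that $x_V+V+V$ lies in $N$ for $x_V$ close to $x^*$ (using the cluster point property), and such that $x_V\in x^*+U$. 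Then every $x_i$ with $\phi_i(x_V)>0$ lies in $N$, so $y_i\in W$. Convexity of $W$ gives $x_V\in W$, which contradicts $x_V\in x^*+U$. Hence $x^*\in f(x^*)$.

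The delicate point is coordinating these neighbourhood choices with the net: $V$ must be fine enough relative to $N$ and $U$ while $x_V$ is still near $x^*$. I would handle this by passing to a subnet indexed by pairs $(V,\text{neighbourhood of }x^*)$.
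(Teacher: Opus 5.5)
Your argument is correct, but the comparison is unusual: the paper gives no proof of this statement. It quotes it as \cite[Theorem~1]{fan1952fixed} and uses it as a black box in the proof of Theorem~\ref{inv}. There the actual work is checking the hypotheses for $\al K_M^\kappa$ and the map $\gamma$.

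What you supply is a self-contained derivation that needs only Brouwer's theorem:
\begin{itemize}
\item closed graph plus compactness gives compact values and upper hemicontinuity;
\item the partition of unity on the finite cover $\{x_i+V\}$ gives a continuous approximate selection on a finite-dimensional polytope;
\item a convex separating neighbourhood $U$ makes $W=f(x^*)+U$ convex, which produces the contradiction at a cluster point.
\end{itemize}
The standard route to the cited result instead invokes Kakutani's set-valued theorem in $\mathbb R^n$ on such a polytope, followed by a compactness (finite-intersection) argument. Your version trades that intermediate theorem for a more hands-on limit passage.

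What your version buys is transparency about where each hypothesis enters. Hausdorffness is used three times: for normality of $\al K_M^\kappa$, hence partitions of unity; for the fact that the span of $x_1,\dots,x_n,y_1,\dots,y_n$ carries the Euclidean topology, which you should state explicitly since it is what licenses Brouwer on $S$; and for separating $x^*$ from the compact set $f(x^*)$. Local convexity is used only through the convexity of $U$ in Step~3. Convexity of $V$ in Step~2 is not actually needed; symmetry suffices.

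The ``delicate point'' you flag needs no subnet, and your current wording has the quantifiers slightly tangled. Fix it as follows:
\begin{itemize}
\item First choose a convex balanced $V_0\subset U$ with $x^*+V_0+V_0\subset N$.
\item Since $x^*$ is a cluster point of the net, there is an index $V\subset V_0$ with $x_V\in x^*+V_0\subset x^*+U$.
\item Every $x_i$ with $\phi_i(x_V)>0$ then lies in $x_V+V\subset x^*+V_0+V_0\subset N$, so $y_i\in W$.
\item By convexity of $W$, $x_V\in W\cap(x^*+U)=\emptyset$, which is the contradiction.
\end{itemize}
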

        \begin{proof}[The proof of theorem~\ref{inv}]
            Recall that $\Xi_\mu^\kappa$ is defined in Eqn.~\eqref{eqn:Xi}.       
            We now define the set-valued map $\gamma: \al K_M^\kappa \to 2^{\al K_M^\kappa}$ by 
			\begin{equation}\label{eqn:gamma}
				\gamma(\mu)=\Xi^\kappa_{\mu},\, \mu\in\al K^\kappa_M.
			\end{equation}
			Lemma~\ref{lem,bound} shows that $\Xi_\mu^\kappa\subset \al K_M^\kappa$, and hence $\gamma(\mu)\in  2^{K_M^\kappa}$.
			
			To complete the proof, it suffices to show that the set-valued map $\gamma$ admits a fixed point. 
Indeed, if $\gamma$ possesses a fixed point---that is, if there exists a probability measure $\mu \in \al K_M^\kappa$ such that $\mu \in \gamma(\mu) = \Xi_{\mu}^\kappa$---then, by the definition of $\Xi_{\mu}^\kappa$, there exists a probability measure $\pi \in \scr P(V_r(\bb R^d) \times \bb R^r)$ satisfying
$$
\pi = \scr L_{(U_t^{(\mu)}, Y^{(\mu)}_t)} \quad \text{and} \quad \scr L_{Y^{(\mu)}_t} = \mu .
$$ 
            Consequently, the strong solution of the DO system~\eqref{eq:DO} coincides with the frozen DO solution given by Eqn.~\eqref{eq.FrozenDO}. In particular, the strong DO dynamics~\eqref{eq:DO} admit an invariant probability measure $\pi$.
            
            To show that the set-valued map $\gamma$ admits a fixed point, we verify the hypotheses of the Kakutani--Fan--Glicksberg fixed point theorem (Theorem~\ref{fixedpoint}). Specifically, we must check the following three conditions 
			\begin{itemize}
                \item[(1)] The set $\al K_M^\kappa$ is compact and convex in the normed space $(\scr P_2, \al W_2)$, which is a  locally convex Hausdorff topological vector space.
				\item[(2)] $\Xi^\kappa_{\mu}$ is nonempty and convex for every $\mu\in\al K^\kappa_M$.
				\item[(3)] The graph of $\gamma$ defined in Eqn.~\eqref{eqn:gamma} is a closed subset of $\al K_M^\kappa\times\al K_M^\kappa$. 
			\end{itemize}
            
            We first verify the first condition of Theorem~\ref{fixedpoint}, namely that $\mathcal K_M^\kappa$ is compact and convex. Since $\al K_M^\kappa\subset \scr P_2(\bb R^r)$, we just prove The set $\al K_M^\kappa$ is compact and convex in the normed space $(\scr P_2(\bb R^r), \bb W_2)$.
            To this end, we apply~\cite[Theorem 5.5]{MR2091955}, which requires verifying the following conditions
			\begin{itemize}
				\item $\al K_M^\kappa$ is compact in the  topology generated by the open sets $$\{\nu\in \scr P(\bb R^r):|\nu(f)-\rho(f)|<\varepsilon\}, \text{ for any }\varepsilon>0,\, \rho\in\scr P(\bb R^r),\, f\in C_{\mathrm{b}}(\bb R^r).$$
				\item $\lim_{n\to \infty}\sup_{\mu\in\al K_M^\kappa}\int_{\{x\in\bb R^r:|x|\ge n\}}|x|^2\mu(\rrm dx)=0.$
			\end{itemize}
            Let $(\mu_n)_{n\ge0}$ be a sequence in $\al K_M^\kappa$, i.e.,
            $$\mu_n(|\cdot|^p)\le M,\text{ for all } n\ge 0.$$
			By Chebyshev's inequality, for any $\varepsilon>0$,  we choose $N$ sufficiently large such that 
			\begin{equation}
				\mu_n(|\cdot|> N)\le\frac{1}{N^p}\mu_n(|\cdot|^p)\le \frac{M}{N^p} \le \varepsilon.
			\end{equation}
			This implies that the sequence $(\mu_n)_{n\ge0}$ is tight, and hence is relatively compact with respect to the weak topology. 
Consequently, to find the weak compactness of $\al K_M^\kappa$, it remains to show that $\al K_M^\kappa$ is weakly closed.  Let $(\mu_n)_{n\ge0}\subset \al K_M^\kappa$ be a sequence such that $\mu_n \Rightarrow \mu$ weakly.
For any $m>0$ and $p>2$, define $\varphi_m(y):= m\wedge |y|^p$.
Since $\varphi_m \in C_{\mathrm{b}}(\bb R^r)$, the weak convergence implies that
$$
\int_{\bb R^r} (m\wedge |y|^p)\,\mu(\rrm dy)
=
\lim_{n\to\infty}
\int_{\bb R^r} (m\wedge |y|^p)\,\mu_n(\rrm dy).
$$
Since $m\wedge |y|^p \le |y|^p$, we have
$$
\int_{\bb R^r} (m\wedge |y|^p)\,\mu_n(\rrm dx)
\le
\int_{\bb R^r} |y|^p\,\mu_n(\rrm dx)
\le M.
$$
Taking $n\to+\infty$ yields
$$
\int_{\bb R^r} (m\wedge |y|^p)\,\mu(\rrm dx)
\le M,
\text{ for all } m>0.
$$
Now, since $m \wedge |y|^p \uparrow |y|^p$ pointwise as $m \to +\infty$,
the monotone convergence theorem implies
\begin{equation}\label{eqn:mubound}
    \int_{\bb R^r} |y|^p\,\mu(\rrm dx)
=
\lim_{m\to\infty}
\int_{\bb R^r} (m\wedge |y|^p)\,\mu(\rrm dx)
\le M.
\end{equation}

Next, let $v\in\bb R^r$ be an arbitrary unit vector and define $f_v(x)=v^\top xx^\top v=(v^\top x)^2$. 
Note that $\mu(|\cdot|^p)\le M$ and $\sup_{n\ge0}\mu_n(|\cdot|^p)\le M$ by assumption. 

Since $\mu_n\Rightarrow\mu$ weakly, for any $R>0$ we have $$\lim_{n\to\infty}\int_{\{x\in\bb R^r:|x|\le R\}}f_v(x)\rrm d\mu_n=\int_{\{x\in\bb R^r:|x|\le R\}}f_v(x)\rrm d\mu.$$
By Chebyshev's inequality, we also have $$\int_{\{x\in\bb R^r:|x|>R\}}f_v(x)\rrm d\mu_n\le \int_{\{x\in\bb R^r:|x|> R\}}|x|^2\rrm d\mu_n\le R^{2-p}\mu_n(|\cdot|^p)\le MR^{2-p}.$$
Similarly, 
$$\int_{\{x\in\bb R^r:|x|>R\}}f_v(x)\rrm d\mu\le \int_{\{x\in\bb R^r:|x|> R\}}|x|^2\rrm d\mu\le R^{2-p}\mu(|\cdot|^p)\le MR^{2-p}.$$
Thus, for any $\varepsilon > 0$, by choosing $R$ and $n$ sufficiently large, we obtain
\begin{equation}
\begin{aligned}
        \Big|\int_{\bb R^r}f_v(x)\rrm d\mu_n-\int_{\bb R^r}f_v(x)\rrm d\mu\Big|&\le \Big|\int_{\{x\in\bb R^r:|x|\le R\}}f_v(x)\rrm d\mu_n-\int_{\{x\in\bb R^r:|x|\le R\}}f_v(x)\rrm d\mu\Big|\\&\quad+\Big|\int_{\{x\in\bb R^r:|x|>R\}}f_v(x)\rrm d\mu_n\Big|+\Big|\int_{\{x\in\bb R^r:|x|> R\}}f_v(x)\rrm d\mu\Big|\le \varepsilon.
\end{aligned}
\end{equation}
This establishes $$\int_{\bb R^r}f_v(x)\rrm d\mu=\lim_{n\to\infty}\int_{\bb R^r}f_v(x)\rrm d\mu_n.$$
Consequently,
\begin{equation}\label{eqn:musemi}
    v^\top C_{\mu}v=\int_{\bb R^r}f_v(x)\rrm d\mu=\lim_{n\to\infty}\int_{\bb R^r}f_v(x)\rrm d\mu_n\ge\kappa.
\end{equation}
Combining Eqns.~\eqref{eqn:mubound} and \eqref{eqn:musemi}, we conclude that $\mu\in\al K_M^\kappa$.
            Additionally, for any $\mu\in\al K_M^\kappa$, Chebyshev's inequality implies
			\begin{equation}
				\int_{\{x\in\bb R^r:|x|\ge n\}}
                |x|^2\mu(\rrm dx)\le \frac{1}{n^{p-2}}\int_{x\in\bb R^r}|x|^p\mu(\rrm dx)\le \frac{M}{n^{p-2}}.
			\end{equation}
			Consequently,
            \begin{equation}\label{eqn:muinf}
                0\le \lim_{n\to\infty}\sup_{\mu\in\al K_M^\kappa}\int_{\{x\in\bb R^r:|x|\ge n\}}
                |x|^2\mu(\rrm dx)\le \lim_{n\to\infty}\frac{M}{n^{p-2}}=0.
            \end{equation}
            It follows that $\al K^\kappa_M$ is a compact convex subset.
           
			Next, we verify the second condition of Theorem~\ref{fixedpoint}, i.e., that the set $\Xi_\mu^\kappa$ is convex for every $\mu \in \mathcal K_M^\kappa$.
			
            By the definition of $\Xi_\mu^\kappa$ in Eqn.~\eqref{eqn:Xi}, for any $\nu_1, \, \nu_2 \in \Xi_\mu^\kappa$ there exist invariant probability measures $\pi^{(\mu)}_1,\, \pi^{(\mu)}_2$ associated with the same frozen semigroup $\al P_t^{(\mu)}$ whose second marginals are $\nu_1$ and $\nu_2$, respectively.
            
For any
$\alpha\in[0,1]$, the convex combination
$
\alpha\pi_1^{(\mu)}+(1-\alpha)\pi^{(\mu)}_2
$
is also 
an invariant probability measure for $\al P_t^{(\mu)}$.

Consequently,
$$
\alpha\nu_1+(1-\alpha)\nu_2\in \Xi_\mu^\kappa,
$$
which proves that $\Xi_\mu^\kappa$ is convex.
            
			Finally, we verify the third condition of Theorem~\ref{fixedpoint}, namely that the graph of $\gamma$ is closed in $\al K_M^\kappa\times \al K^\kappa_M$. 
            Let $(\mu_n)_{n\ge 1}$ and $(\nu_n)_{n\ge 1}$ be sequences in $\mathcal K_M^\kappa$ such that $\nu_n \in \gamma(\mu_n)$ for every $n$, and assume there exist $\nu,\, \mu \in \mathcal K_M^\kappa$ with $\lim_{n\to\infty} \bb W_2(\nu_n,\nu) = 0$ and $\lim_{n\to\infty} \bb W_2(\mu_n,\mu) = 0$.
            
By Eqn.~\eqref{eqn:Xi}, for each $n$ there exists a probability measure $\pi_n\in\mathscr P(V_r(\mathbb R^d)\times \bb R^r)$ such that $\pi_n\in\Lambda_{\mu_n}^\kappa$; i.e., $\pi_n$ is an invariant probability measure for the frozen semigroup $P_t^{(\mu_n)}$, and $\nu_n$ is the second marginal of $\pi_n$. 
Since $\pi_n$ is invariant for $\al P_t^{(\mu_n)}$ for each $n \ge 1$, for every bounded Lipschitz continuous function $f$ on $V_r(\mathbb R^d) \times \mathbb R^r$ we have
$$
\int f\,\rrm d\pi_n
=
\int \al P_t^{(\mu_n)}f\,\rrm d\pi_n,
\text{ for all } t\ge0.
$$

Next, we show that the sequence $\{\pi_n\}_{n\ge1}$ is tight. By Lemma~\ref{bound} and Jensen's inequality, for any $N\ge0$ and $t\ge0$, it holds that
 \begin{equation*}
     \begin{aligned}
         \pi_n(\|\cdot\|^p\wedge N)&=\bb E[\|(U_t^{(\mu_n)}, Y^{(\mu_n)}_t)\|^p\wedge N]\\&\le M+\int_{V_r(\bb R^d)\times\bb R^r}\big(\big(|y|^p\rrm e^{-\lambda t}\big)\wedge N\big)\pi_n(\rrm dx,\rrm dy),
     \end{aligned}
 \end{equation*}
 where $M$ is given in Eqn.~\eqref{eqn:M}.
 
Letting $t\to+\infty$ and applying the dominated convergence theorem, we obtain
 \begin{equation*}
     \pi_n(\|\cdot\|^p\wedge N)\le M.
 \end{equation*}
 Then, letting $N\to\infty$ and using the monotone convergence theorem, it follows that
 \begin{equation*}
     \pi_n(\|\cdot\|^p)\le M.
 \end{equation*}
By Chebyshev's inequality, for any $\varepsilon>0$, one may choose $R> 0$ sufficiently large such that 
 \begin{equation}
     \pi_n(\|(\cdot,\cdot)\|>R)\le R^{-p}\pi_n(\|\cdot\|^p)\le R^{-p}M\le \varepsilon,
 \end{equation}
 which implies that the sequence $(\pi_n)_{n\in\bb N}$ is tight.
Passing to a subsequence if necessary, we may assume
$\pi_n\Rightarrow \pi^*$ weakly for some
$\pi^*\in\mathscr P(V_r(\mathbb R^d)\times \bb R^r)$.

We will show that $\pi^*$ is invariant for $\al P^{(\nu)}_t$. Fix $t>0$ and let $f$ be bounded Lipschitz.
Then
$$
\int_{} f\,\rrm d\pi_n
-
\int \al P_t^{(\mu)}f\,\rrm d\pi_n
=
\int\big(\al P_t^{(\mu_n)}f-\al P_t^{(\mu)}f\big)\,\rrm d\pi_n.
$$
By Lemma~\ref{eq.continuous}, there exists a constant $C_t>0$ such that
$$
\big|\al P_t^{(\mu_n)}f(v,y)-\al P_t^{(\mu)}f(v,y)\big|
\le
\|f\|_{\mathrm{Lip}}\,C_t(1+|y|^2)\,\bb W_2(\mu_n,\mu),
$$
where the constant $C$ appearing in Lemma~\ref{eq.continuous} is independent of $(v,y)$.
Hence
$$
\left|
\int\big(\al P_t^{(\mu_n)}f-\al P_t^{(\mu)}f\big)\,\rrm d\pi_n
\right|
\le
\|f\|_{\mathrm{Lip}}\,C_t(1+M)\,\bb W_2(\mu_n,\mu)\to0.
$$
Since $(U_t^{(\mu)},Y_t^{(\mu)})$ is Feller by Lemma~\ref{le.feller},
$\al P_t^{(\mu)}f$ is a bounded continuous function of $V_r(\bb R^d)\times\bb R^r$. Therefore, taking $n\to\infty$ yields that
$$
\int f\,\rrm d\pi^*=\int \al P_t^{(\mu)}f\,\rrm d\pi^*.
$$
Thus $\pi^*$ is an invariant probability measure for the frozen
system corresponding to $\mu$. For any bounded continuous function $f:\bb R^r\to\bb R$, we have 
$$\int_{V_r(\bb R^d)\times\bb R^r}f(y)\pi^*(\rrm du,\rrm dy)=\lim_{n\to\infty}\int_{V_r(\bb R^d)\times\bb R^r}f(y)\pi_n(\rrm du,\rrm dy)=\lim_{n\to\infty}\int_{\bb R^r}f(y)\nu_n(\rrm dy)=\int_{\bb R^r}f(y)\nu(\rrm dy),$$which shows that
$\nu\in\gamma(\mu)$.

We have now verified all the conditions of Theorem~\ref{fixedpoint}. Hence, by the Kakutani--Fan--Glicksberg
fixed point theorem, there exists $\mu\in\mathcal K_M^\kappa$ such that
$$
\mu\in\gamma(\mu).
$$
Therefore, there exists $\pi^*\in\mathscr P(V_r(\mathbb R^d)\times\bb R^r)$ such that
$\pi^*\in\Lambda_\mu^\kappa$, i.e., $\pi^*$ is an invariant probability measure for the frozen system
with parameter $\mu$ and $\mu$ is the second marginal of $\pi^*$.

By definition of the frozen system, when the law of $Y_t^{(\mu)}$ is
equal to $\mu$, the frozen equation coincides with the original strong DO
system~\eqref{eq:DO}. Hence $\pi^*$ is an invariant probability
measure for the strong DO solution.

In summary, since $\mu\in\mathcal K_M^\kappa\subset\mathscr P_2^\kappa(\mathbb R^r)$,
we have $\det(C_\mu)\neq0$, (in particular, $C_\mu\succeq \kappa I_r$, $\kappa$ given in Eqn.~\eqref{eqn:kappa}). This completes the proof.
\end{proof}

\appendix
\section{Some property of frozen DO system}
Recall the frozen DO system mentioned in Section~\ref{sec:invariant}. For any fixed $\kappa > 0$, let $\mu \in \scr P_{2}^{\kappa}(\bb R^r)$. The dynamics are given by
\begin{equation}
             \begin{aligned}
                 C_{\mu}\rrm dU_t^{(\mu)}&=\int_{\bb R^r}x b(U_t^{(\mu)\top} x)^\top\mu(\rrm dx)(I_{d}-P_{U^{(\mu)}_t})\rrm dt;\\
				\rrm d Y_t^{(\mu)}&=U_t^{(\mu)}b(U_t^{(\mu)\top} Y^{(\mu)}_t)\rrm dt+U_t^{(\mu)}\sigma(U_t^{(\mu)\top} Y^{(\mu)}_t)\rrm dB_t.
             \end{aligned}
         \end{equation} 
\begin{theorem}\label{theo:Ymubound}
    Suppose Assumptions (H3) and (H7) hold. 
	For any initial condition $\zeta\in L^2(\Omega,V_r(\bb R^d))$ and $\xi\in L^2(\Omega,\bb R^r)$ and any $T\ge0$, there exists a constant $C(b,\sigma,T)$ such that 
    \begin{equation}
        \begin{aligned}
            &\bb E_0[\|U^{(\mu)}_t\|^2_\rrm{HS}]=\zeta;\\
            &\bb E_0[|Y^{(\mu)}_t|^2]\le C(b,\sigma,T)\big(|\xi|^2+1\big),
        \end{aligned}
    \end{equation}
    for all $t\in[0,T]$. In particular,
    \begin{equation}
        \begin{aligned}
            &\bb E[\|U^{(\mu)}_t\|^2_\rrm{HS}]=r;\\
            &\bb E[|Y^{(\mu)}_t|^2]\le C(b,\sigma,T)\big(\bb E[|\xi|^2]+1\big),
        \end{aligned}
    \end{equation}
    holds for every $t\in[0,T]$.
\end{theorem}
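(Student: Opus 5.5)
The first claim follows from the orthonormality constraint being preserved by the frozen flow, so I would establish that first and then obtain the $Y$ bound from It\^o's formula with a localisation and Gr\"onwall's inequality on the finite horizon $[0,T]$.

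\textbf{The $U$ identity.} Exactly as in the proof of Lemma~\ref{bound}, the absolutely continuous path $U^{(\mu)}$ satisfies
\[
\frac{\rrm d U^{(\mu)}_t}{\rrm dt}\,U_t^{(\mu)\top}
=C_\mu^{-1}\int_{\bb R^r}x\,b(U_t^{(\mu)\top}x)^\top\mu(\rrm dx)\,(I_d-P_{U^{(\mu)}_t})\,U_t^{(\mu)\top}
=O_r ,
\]
because $(I_d-P_U)U^\top=U^\top-U^\top UU^\top=0$ whenever $UU^\top=I_r$. The relation $U_tU_t^\top=I_r$ is therefore preserved from $\zeta\in V_r(\bb R^d)$. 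Hence $\|U^{(\mu)}_t\|_\rrm{HS}^2=\rrm{Tr}(U_tU_t^\top)=r$ pathwise. This gives the conditional identity, which I read as $\bb E_0\|U_t\|^2_\rrm{HS}=\|\zeta\|^2_\rrm{HS}=r$, and its unconditional version.

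\textbf{The $Y$ bound.} Conditioning on $\scr F_0$, I treat $(\zeta,\xi)$ as deterministic. Apply It\^o's formula to $|Y^{(\mu)}_t|^2$:
\[
\rrm d|Y_t|^2=\big(2\langle Y_t,U_tb(U_t^\top Y_t)\rangle+\|U_t\sigma(U_t^\top Y_t)\|^2_\rrm{HS}\big)\rrm dt+\rrm dM_t .
\]
The operator norm of $U_t$ is $1$, and $|U_t^\top Y_t|=|Y_t|$. By (H7) we have $|b(x)|\le |b(0)|+L_{b,6}|x|$, and by (H3) we have $\|\sigma(x)\|^2_\rrm{HS}\le 2\|\sigma(0)\|^2_\rrm{HS}+2L_{\sigma,1}|x|^2$. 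The drift is therefore bounded by $K(1+|Y_t|^2)$, with $K$ depending only on $b$ and $\sigma$. To handle the local martingale, introduce $\tau_N=\inf\{t:|Y_t|>N\}$, so that $\bb E_0 M_{t\wedge\tau_N}=0$. This gives
\[
\bb E_0|Y_{t\wedge\tau_N}|^2\le|\xi|^2+K\int_0^t\big(1+\bb E_0|Y_{s\wedge\tau_N}|^2\big)\rrm ds .
\]
Gr\"onwall's inequality then yields $\bb E_0|Y_{t\wedge\tau_N}|^2\le (|\xi|^2+KT)e^{KT}$. Letting $N\to\infty$ and using Fatou's lemma, I obtain the bound with $C(b,\sigma,T)=(1+KT)e^{KT}$. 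Taking expectations over $\scr F_0$ gives the unconditional statement.

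\textbf{Main obstacle.} The only delicate point is justifying that conditioning on $\scr F_0$ is legitimate. I would argue it via the Markov/flow property of the unique strong solution of the frozen system, whose coefficients involve only the fixed $\mu$ and not the law of $Y$. The solution from random initial data then agrees a.s. with the solution started from deterministic data, evaluated at $(\zeta,\xi)$. Everything else is routine.
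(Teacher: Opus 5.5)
Your proof is correct and is essentially the paper's argument. Invariance of $U^{(\mu)}_tU^{(\mu)\top}_t=I_r$ under the frozen flow gives $\|U^{(\mu)}_t\|^2_{\rrm{HS}}=r$; the ``$=\zeta$'' in the statement is a misprint for $\|\zeta\|^2_{\rrm{HS}}$, as you read it. The $Y$-bound then follows from the linear growth implied by (H3) and (H7) together with Gr\"onwall on $[0,T]$.

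The only difference is that the paper squares the integral equation and applies It\^o's isometry, whereas you use It\^o's formula for $|Y_t|^2$. Your localisation by $\tau_N$ and Fatou is, if anything, more careful, since it does not presuppose $\bb E_0|Y^{(\mu)}_s|^2<\infty$. Also, the conditioning on $\scr F_0$ that you flag as the main obstacle needs no flow property: the stopped stochastic integral $M_{t\wedge\tau_N}$ is a true martingale vanishing at $0$, so its $\scr F_0$-conditional mean is zero.
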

\begin{proof}
    In Section~\ref{sec:invariant}, we have shown that the frozen DO system admits a unique solution $(U^{(\mu)},Y^{(\mu)})$.
    Note that the function $U^{(\mu)}$ is absolutely continuous on $[0,T]$, so it is differentiable almost everywhere. The derivative $\frac{\rrm dU^{(\mu)}_t}{\rrm dt}$ satisfies 
			\begin{equation}
				\frac{\rrm dU^{(\mu)}_t}{\rrm dt}U_t^{{(\mu)}\top}=C_{\mu}^{-1}\int_{\bb R^r}x b(U_t^{{(\mu)}\top} x)^\top\mu(\rrm dx)(I_{d}-P_{U^{(\mu)}_t})U^{{(\mu)}\top}_t=O_r,
			\end{equation}
			where $O_r$ is zero matrix. Thus $\frac{\rrm d}{\rrm dt}\big(U^{(\mu)}_tU_t^{{(\mu)}\top}\big)=O_r$. Therefore, it follows that
			\begin{equation}
				\bb E_0[\|U_t^{(\mu)}\|^2_\rrm{HS}]=\bb E_0[\|U_0^{(\mu)}\|^2_\rrm{HS}]=\|\zeta\|^2_\rrm{HS}.
			\end{equation}
    Using It\^o's isometry, H\"older's inequality, together with Assumptions (H3), (H7), we obtain 
    \begin{align*}
        \bb E_0[|Y^{(\mu)}_t|^2]&\le 3|Y^{(\mu)}_0|^2+3\bb E_0\Big[\Big|\int_0^tU^{(\mu)}_sb\big(U^{(\mu)\top}_sY^{(\mu)}_s\big)\rrm ds\Big|^2\Big]+3\bb E_0\Big[\Big|\int_0^tU^{(\mu)}_s\sigma\big(U^{(\mu)\top}_sY^{(\mu)}_s\big)\rrm dB_s\Big|^2\Big]\\&\le 
        3|Y^{(\mu)}_0|^2+3t\int_0^t\bb E_0\big[\big|U^{(\mu)}_sb\big(U^{(\mu)\top}_sY^{(\mu)}_s\big)\big|^2\big]\rrm ds+3\int_0^t\bb E_0\big[\big|U^{(\mu)}_s\sigma\big(U^{(\mu)\top}_sY^{(\mu)}_s\big)\big|^2\big]\rrm ds\\
        &\le 3|Y^{(\mu)}_0|^2+C(b,\sigma,T)+C(b,\sigma,T)\int_0^t\bb E_0[|Y^{(\mu)}_s|^2]\rrm ds.
    \end{align*}
    Then by the Gr\"onwall lemma, we have 
    \begin{equation}
        \bb E_0[|Y^{(\mu)}_t|^2]\le C(b,\sigma,T)\big(|Y^{(\mu)}_0|^2+1\big)=C(b,\sigma,T)\big(|\xi|^2+1\big).
    \end{equation}
\end{proof}
\begin{theorem}\label{theo:inverse}
    Suppose that Assumptions (H4), (H5), (H7) and (H8) hold. Moreover, for any $p\ge 2$, suppose $2L_{b,7}-(p-1)L_{\sigma,1}>1$ holds, where $L_{b,7}$ and $L_{\sigma,1}$ are in (H8) and (H3), respectively. Then, for all $t \ge 0$, it holds that 
    \begin{equation}
        \bb E[Y_t^{(\mu)}Y_t^{(\mu)\top}]\succeq \rrm e^{-\frac{t}{\varepsilon}}\bb E[Y_0^{(\mu)}Y_0^{(\mu)\top}]+\varepsilon\frac{L_{\sigma,3}}{2}(1-\frac{\big(\bb E[|Y_0^{(\mu)}|^p\big)^{2/p}}{1+M^{2/p}})(1-\rrm e^{-\frac{t}{\varepsilon}})I_r,
    \end{equation}
    where $\varepsilon=\frac{L_{\sigma,3}}{2L_{b,6}(1+M^{2/p})}$, and $M$ is defined in Eqn.~\eqref{eqn:M}.
\end{theorem}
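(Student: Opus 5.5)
We derive a linear matrix differential inequality for the second-moment matrix $G_t:=\bb E[Y_t^{(\mu)}Y_t^{(\mu)\top}]$ and then integrate it. Fix a unit vector $e\in\bb R^r$ and set $g_t:=e^\top G_t e=\bb E[(e^\top Y^{(\mu)}_t)^2]$. By It\^o's formula applied to $(e^\top Y_t^{(\mu)})^2$ (localised with $\tau_N=\inf\{t:|Y^{(\mu)}_t|>N\}$, then letting $N\to\infty$ using the moment bound of Lemma~\ref{bound}), we get
\begin{equation*}
\frac{\rrm d}{\rrm dt}g_t=2\bb E\big[(e^\top Y_t^{(\mu)})\,e^\top U_t^{(\mu)}b(U_t^{(\mu)\top}Y_t^{(\mu)})\big]+\bb E\big[e^\top U_t^{(\mu)}\sigma\sigma^\top(U_t^{(\mu)\top}Y_t^{(\mu)})U_t^{(\mu)\top}e\big].
\end{equation*}
Since $U_t^{(\mu)}U_t^{(\mu)\top}=I_r$, the vector $U_t^{(\mu)\top}e$ is a unit vector in $\bb R^d$. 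Hence (H5) gives the diffusion term $\ge L_{\sigma,3}$.

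The drift term has to be bounded from below. Using (H7), $|b(x)|\le L_{b,6}(1+|x|)$ up to a constant absorbed in $b(0)$, and $|U^\top_t Y_t|=|Y_t|$, we get
\begin{equation*}
2\bb E[(e^\top Y)\,e^\top U b]\ge -2L_{b,6}\bb E\big[|e^\top Y|(1+|Y|)\big].
\end{equation*}
Young's inequality with parameter $\varepsilon$ then yields
\begin{equation*}
2|e^\top Y|(1+|Y|)\le \tfrac{1}{\varepsilon L_{b,6}}(e^\top Y)^2+\varepsilon L_{b,6}(1+|Y|)^2 .
\end{equation*}
The choice $\varepsilon=L_{\sigma,3}/(2L_{b,6}(1+M^{2/p}))$ is designed so that the first piece contributes $-g_t/\varepsilon$. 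The second piece contributes at most a fraction of $L_{\sigma,3}$, namely $\frac{L_{\sigma,3}}{2}\cdot\frac{\bb E(1+|Y_t|)^2}{1+M^{2/p}}$ up to constants. The uniform $p$-moment bound of Lemma~\ref{bound} together with H\"older's inequality, $\bb E|Y_t|^2\le(\bb E|Y_t|^p)^{2/p}$, controls $\bb E|Y_t|^2$. The standing relation between $L_{b,7}$ and $L_{\sigma,1}$ is used only through Lemma~\ref{bound}, to make this moment bound uniform in $t$, with $M$ as in \eqref{eqn:M}.

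The outcome is a scalar inequality of the form
\begin{equation*}
g_t'\ge -\tfrac{1}{\varepsilon}g_t+\tfrac{L_{\sigma,3}}{2}\Big(1-\tfrac{(\bb E|Y_0|^p)^{2/p}}{1+M^{2/p}}\Big),
\end{equation*}
after bounding the time-dependent moment by its initial and stationary parts. Multiplying by $\rrm e^{t/\varepsilon}$ and integrating (Gr\"onwall in reverse) gives
\begin{equation*}
g_t\ge \rrm e^{-t/\varepsilon}g_0+\varepsilon\tfrac{L_{\sigma,3}}{2}(\cdots)(1-\rrm e^{-t/\varepsilon}).
\end{equation*}
Because $e$ is arbitrary, this is exactly the claimed Loewner inequality.

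I expect the main obstacle to be the Young-inequality bookkeeping. The cross term $|e^\top Y||Y|$ cannot be bounded by $(e^\top Y)^2$ alone, so the moment of $|Y_t|^2$ has to be absorbed so that the residual constant takes precisely the form $1-(\bb E|Y_0|^p)^{2/p}/(1+M^{2/p})$ rather than a time-dependent quantity. I would first try to use the monotone bound $\bb E|Y_t|^p\le \max(\bb E|Y_0|^p,M)$ from Lemma~\ref{bound}, and adjust constants (possibly losing a factor of $2$) so that the coefficient matches.
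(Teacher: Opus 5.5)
Your route is the paper's route. You apply It\^o's formula to the quadratic form $e^\top Y_tY_t^\top e$, bound the noise term below by $L_{\sigma,3}$ using (H5) and $|U_t^\top e|=1$, use a weighted Young inequality on the drift cross-term and the uniform moment bound of Lemma~\ref{bound}, and finish with the integrating factor $\rrm e^{t/\varepsilon}$. Your localisation by $\tau_N$ is a welcome addition that the paper skips.

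The one step you leave open is making the constant come out exactly. The paper handles it in three moves:
\begin{itemize}
\item It writes $|v^\top Y||b|\le\frac{1}{2\varepsilon}(v^\top Y)^2+\frac{\varepsilon}{2}|b|^2$.
\item It bounds $\mathbb E|b(U_t^\top Y_t)|^2\le L_{b,6}(1+\mathbb E|Y_t|^2)$.
\item It uses $\mathbb E|Y_t|^2\le(\mathbb E|Y_0|^p+M)^{2/p}\le a+M^{2/p}$, where $a:=(\mathbb E|Y_0|^p)^{2/p}$.
\end{itemize}
The last move needs only the additive form of Lemma~\ref{bound} and the subadditivity of $x\mapsto x^{2/p}$. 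You do not need the max form, which Lemma~\ref{bound} does not state. With the stated $\varepsilon$, the residual is then exactly $\frac{L_{\sigma,3}}{2}\big(1+\frac{a}{1+M^{2/p}}\big)$, and subtracting it from $L_{\sigma,3}$ gives the claimed coefficient.

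The catch is that $|b(x)|^2\le L_{b,6}(1+|x|^2)$ does not follow from (H7). That assumption only gives $|b(x)|^2\le 2L_{b,6}^2|x|^2+2|b(0)|^2$. Your split correctly produces $\varepsilon L_{b,6}^2\,\mathbb E(1+|Y_t|)^2$, which can be larger than the paper's residual by a factor of up to $2L_{b,6}$.

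No reshuffling of constants can close this, because the inequality with the stated $\varepsilon$ fails in general. Take $d=r=1$, $p=2$, $b(x)=-Lx$, $\sigma\equiv s$ and $Y_0=0$. Then $U$ is frozen at $\pm1$, $Y$ is an Ornstein--Uhlenbeck process, and $\mathbb E Y_t^2\to s^2/(2L)$. The right-hand side tends to $\varepsilon s^2/2=s^4/(4L(1+M))$, which is larger as soon as $s^2>2(1+M)$. For instance, $L=10$, $s^2=100$ and the admissible choice $M=1+2s^2/(2L-1)\approx 11.5$ in Lemma~\ref{bound} give roughly $20$ on the right against $5$ on the left.

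So your hedge about constants is the correct instinct, not a defect. To finish the proof:
\begin{itemize}
\item Fix a genuine growth constant $K$ with $|b(x)|^2\le K(1+|x|^2)$, for example $K=2\max\{L_{b,6}^2,|b(0)|^2\}$.
\item Apply Young directly as $2|e^\top Y||b|\le\frac1\varepsilon(e^\top Y)^2+\varepsilon|b|^2$.
\item Take $\varepsilon=\frac{L_{\sigma,3}}{2K(1+M^{2/p})}$.
\end{itemize}
Your argument then closes with exactly the stated form of the bound, with $K$ in place of $L_{b,6}$.

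One further point: both your proof and the paper's invoke Lemma~\ref{bound}, so both tacitly need (H3) and Assumption~\ref{AS:relation2}. The relation $2L_{b,7}-(p-1)L_{\sigma,1}>1$ in the statement is weaker than what that lemma requires.
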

    \begin{proof}
        by It\^o's formula
    \begin{align*}
        \rrm d(Y^{(\mu)}_tY^{(\mu)\top}_t)=\rrm d(Y^{(\mu)}_t)Y^{(\mu)\top}_t+Y^{(\mu)}_t\rrm d(Y^{(\mu)\top}_t)+U^{(\mu)}_t\sigma\big(U^{(\mu)\top}_tY^{(\mu)}_t\big)\Big(U^{(\mu)}_t\sigma\big(U^{(\mu)\top}_tY^{(\mu)}_t\big)\Big)^\top\rrm dt.
    \end{align*}
    Taking the expectation of both sides yields
    \begin{align*}
        \rrm d \bb E[Y^{(\mu)}_tY^{(\mu)\top}_t]&=\bb E\big[U^{(\mu)}_tb\big(U^{(\mu)\top}_tY^{(\mu)}_t\big)Y^{(\mu)\top}_t\big]\rrm dt+\bb E[Y^{(\mu)}_t\big(U^{(\mu)}_tb\big(U^{(\mu)\top}_tY^{(\mu)}_t\big)\big)^\top]\rrm dt\\&\quad+\bb E\big[U^{(\mu)}_t\sigma\big(U^{(\mu)\top}_tY^{(\mu)}_t\big)\Big(U^{(\mu)}_t\sigma\big(U^{(\mu)\top}_tY^{(\mu)}_t\big)\Big)^\top\big]\rrm dt.
    \end{align*}
    Then, for any unit vector $v\in\bb R^r$, we have 
    \begin{equation}
        v^\top\frac{\rrm d \bb E[Y^{(\mu)}_tY^{(\mu)\top}_t]}{\rrm dt}v=2v^\top\bb E[Y^{(\mu)}_t\big(U^{(\mu)}_tb\big(U^{(\mu)\top}_tY^{(\mu)}_t\big)\big)^\top]v+v^\top\bb E\big[U^{(\mu)}_t\sigma\big(U^{(\mu)\top}_tY^{(\mu)}_t\big)\Big(U^{(\mu)}_t\sigma\big(U^{(\mu)\top}_tY^{(\mu)}_t\big)\Big)^\top\big]v.
    \end{equation}
    By (H5), the last term can be bounded below as
    \begin{align}\label{eqn:sigmav}
        v^\top\bb E\big[U^{(\mu)}_t\sigma\big(U^{(\mu)\top}_tY^{(\mu)}_t\big)\Big(U^{(\mu)}_t\sigma\big(U^{(\mu)\top}_tY^{(\mu)}_t\big)\Big)^\top\big]v\ge L_{\sigma,3}\bb E\big[v^\top U^{(\mu)}_tU^{(\mu)\top}_t v\big]\ge L_{\sigma,3}.
    \end{align}
    By H\"older's inequality, (H7) and Lemma~\ref{bound}, the first term can be bounded as 
    \begin{align*}
        \Big|v^\top\bb E[Y^{(\mu)}_t\big(U^{(\mu)}_tb\big(U^{(\mu)\top}_tY^{(\mu)}_t\big)\big)^\top]v\Big|&\le \Big|\bb E[v^\top Y^{(\mu)}_tb^\top\big(U^{(\mu)\top}_tY^{(\mu)}_t\big)v]\Big|\\&\le 
        \frac{1}{2\varepsilon}\bb E\big[\big|v^\top Y^{(\mu)}_t\big|^2\big]+\frac{\varepsilon}{2}\bb E\big[\big|b\big(U^{(\mu)\top}_tY^{(\mu)}_t\big)\big|^2\big]\\&\le 
        \frac{1}{2\varepsilon}\bb E\big[v^\top Y^{(\mu)}_tY^{(\mu)\top}_tv\big]+\frac{\varepsilon}{2}L_{b,6}\big(1+\bb E[|Y^{(\mu)}_t|^2]\big)\\&\le 
        \frac{1}{2\varepsilon}\bb E\big[v^\top Y^{(\mu)}_tY^{(\mu)\top}_tv\big]+\frac{\varepsilon}{2}L_{b,6}\Big(1+M^{2/p}+\big(\bb E[|Y_0^{(\mu)}|^p\big)^{2/p}\Big),\text{ for any }\varepsilon>0.
    \end{align*}
    Taking $\varepsilon=\frac{L_{\sigma,3}}{2L_{b,6}(1+M^{2/p})}$ yields that 
    \begin{align*}
         v^\top\frac{\rrm d \bb E[Y^{(\mu)}_tY^{(\mu)\top}_t]}{\rrm dt}v&\ge -\frac{1}{\varepsilon}\bb E\big[v^\top Y^{(\mu)}_tY^{(\mu)\top}_tv\big]+\frac{L_{\sigma,3}}{2}\Big(1-\frac{\big(\bb E[|Y_0^{(\mu)}|^p\big)^{2/p}\Big)}{1+M^{2/p}}\Big),
    \end{align*}
    from which we deduce 
    \begin{align*}
        v^\top\frac{\rrm d \rrm e^\frac{t}{\varepsilon}\bb E[Y^{(\mu)}_tY^{(\mu)\top}_t]}{\rrm dt}v&\ge \frac{L_{\sigma,3}}{2}\Big(1-\frac{\big(\bb E[|Y_0^{(\mu)}|^p\big)^{2/p}}{1+M^{2/p}}\Big)\rrm e^\frac{t}{\varepsilon}.
    \end{align*}
    From the previous estimate, we obtain
    \begin{equation}\label{eq:Ysemi}
    \begin{aligned}
        v^\top\bb E[Y^{(\mu)}_tY^{(\mu)\top}_t]v&\ge  \rrm e^{-\frac{t}{\varepsilon}}v^\top\bb E[Y^{(\mu)}_0Y^{(\mu)\top}_0]v+\frac{L_{\sigma,3}}{2}\Big(1-\frac{\big(\bb E[|Y_0^{(\mu)}|^p\big)^{2/p}}{1+M^{2/p}}\Big)\int_0^t\rrm e^{-\frac{s}{\varepsilon}}\rrm ds\\&=
        \rrm e^{-\frac{t}{\varepsilon}}v^\top\bb E[Y^{(\mu)}_0Y^{(\mu)\top}_0]v+\varepsilon\frac{L_{\sigma,3}}{2}\Big(1-\frac{\big(\bb E[|Y_0^{(\mu)}|^p\big)^{2/p}}{1+M^{2/p}}\Big)(1-\rrm e^{-\frac{t}{\varepsilon}}).        
    \end{aligned}
    \end{equation}
    \end{proof}
	\bibliographystyle{plain}

\begin{thebibliography}{10}

\bibitem{aliprantis2006infinite}
Charalambos~D Aliprantis and Kim~C Border.
\newblock {\em Infinite dimensional analysis: a hitchhiker’s guide}.
\newblock Springer, 2006.

\bibitem{ambrosio2005gradient}
Luigi Ambrosio, Nicola Gigli, and Giuseppe Savar{\'e}.
\newblock {\em Gradient flows: in metric spaces and in the space of probability measures}.
\newblock Springer, 2005.

\bibitem{ash2000probability}
Robert~B Ash and Catherine~A Dol{\'e}ans-Dade.
\newblock {\em Probability and measure theory}.
\newblock Academic press, 2000.

\bibitem{bao2022existence}
Jianhai Bao, Michael Scheutzow, and Chenggui Yuan.
\newblock Existence of invariant probability measures for functional mckean-vlasov sdes.
\newblock {\em Electronic Journal of Probability}, 27:1--14, 2022.

\bibitem{coupling}
Jianhai Bao and Jian Wang.
\newblock Coupling methods and exponential ergodicity for two-factor affine processes.
\newblock {\em Math. Nachr.}, 296(5):1716--1736, 2023.

\bibitem{basak2016langevin}
Gopal~K Basak and Arunangshu Biswas.
\newblock Langevin type limiting processes for adaptive mcmc.
\newblock {\em Indian Journal of Pure and Applied Mathematics}, 47(2):301--328, 2016.

\bibitem{blomker2022continuous}
Dirk Bl{\"o}mker, Claudia Schillings, Philipp Wacker, and Simon Weissmann.
\newblock Continuous time limit of the stochastic ensemble kalman inversion: strong convergence analysis.
\newblock {\em SIAM Journal on Numerical Analysis}, 60(6):3181--3215, 2022.

\bibitem{charous2023dynamically}
Aaron Charous and Pierre~FJ Lermusiaux.
\newblock Dynamically orthogonal runge--kutta schemes with perturbative retractions for the dynamical low-rank approximation.
\newblock {\em SIAM Journal on Scientific Computing}, 45(2):A872--A897, 2023.

\bibitem{MR2091955}
Mu-Fa Chen.
\newblock {\em From {M}arkov chains to non-equilibrium particle systems}.
\newblock World Scientific Publishing Co., Inc., River Edge, NJ, second edition, 2004.

\bibitem{chikuse2003statistics}
Yasuko Chikuse.
\newblock {\em Statistics on special manifolds}, volume 174.
\newblock Springer Science \& Business Media, 2003.

\bibitem{da1996ergodicity}
Giuseppe Da~Prato and Jerzy Zabczyk.
\newblock {\em Ergodicity for infinite dimensional systems}, volume 229.
\newblock Cambridge university press, 1996.

\bibitem{fan1952fixed}
Ky~Fan.
\newblock Fixed-point and minimax theorems in locally convex topological linear spaces.
\newblock {\em Proceedings of the National Academy of Sciences}, 38(2):121--126, 1952.

\bibitem{feppon2018dynamically}
Florian Feppon and Pierre~FJ Lermusiaux.
\newblock Dynamically orthogonal numerical schemes for efficient stochastic advection and lagrangian transport.
\newblock {\em Siam Review}, 60(3):595--625, 2018.

\bibitem{normaldis}
Clark~R. Givens and Rae~Michael Shortt.
\newblock A class of {W}asserstein metrics for probability distributions.
\newblock {\em Michigan Math. J.}, 31(2):231--240, 1984.

\bibitem{guilleminot2014ito}
Johann Guilleminot and Christian Soize.
\newblock It\^o sde--based generator for a class of non-gaussian vector-valued random fields in uncertainty quantification.
\newblock {\em SIAM Journal on Scientific Computing}, 36(6):A2763--A2786, 2014.

\bibitem{horn2012matrix}
Roger~A Horn and Charles~R Johnson.
\newblock {\em Matrix analysis}.
\newblock Cambridge university press, 2012.

\bibitem{karatzas2014brownian}
Ioannis Karatzas and Steven Shreve.
\newblock {\em Brownian motion and stochastic calculus}.
\newblock springer, 2014.

\bibitem{kazashi2021existence}
Yoshihito Kazashi and Fabio Nobile.
\newblock Existence of dynamical low rank approximations for random semi-linear evolutionary equations on the maximal interval.
\newblock {\em Stochastics and Partial Differential Equations: Analysis and Computations}, 9(3):603--629, 2021.

\bibitem{DLRASDE}
Yoshihito Kazashi, Fabio Nobile, and Fabio Zoccolan.
\newblock Dynamical low-rank approximation for stochastic differential equations.
\newblock {\em Math. Comp.}, 94(353):1335--1375, 2025.

\bibitem{kazashi2026numerical}
Yoshihito Kazashi, Fabio Nobile, and Fabio Zoccolan.
\newblock Numerical methods for dynamical low-rank approximations of stochastic differential equations--part i: Time discretization.
\newblock {\em arXiv preprint arXiv:2601.21428}, 2026.

\bibitem{khasminskii2011stochastic}
Rafail Khasminskii.
\newblock {\em Stochastic stability of differential equations}, volume~66.
\newblock Springer Science \& Business Media, 2011.

\bibitem{koch2007dynamical}
Othmar Koch and Christian Lubich.
\newblock Dynamical low-rank approximation.
\newblock {\em SIAM Journal on Matrix Analysis and Applications}, 29(2):434--454, 2007.

\bibitem{llopis2018particle}
Francesc~Pons Llopis, Nikolas Kantas, Alexandros Beskos, and Ajay Jasra.
\newblock Particle filtering for stochastic navier--stokes signal observed with linear additive noise.
\newblock {\em SIAM Journal on Scientific Computing}, 40(3):A1544--A1565, 2018.

\bibitem{musharbash2018dual}
Eleonora Musharbash and Fabio Nobile.
\newblock Dual dynamically orthogonal approximation of incompressible navier stokes equations with random boundary conditions.
\newblock {\em Journal of Computational Physics}, 354:135--162, 2018.

\bibitem{musharbash2015error}
Eleonora Musharbash, Fabio Nobile, and Tao Zhou.
\newblock Error analysis of the dynamically orthogonal approximation of time dependent random pdes.
\newblock {\em SIAM Journal on Scientific Computing}, 37(2):A776--A810, 2015.

\bibitem{niknam2024nuclear}
Mohamad Niknam and Louis-S Bouchard.
\newblock Nuclear induction line shape: Non-markovian diffusion with boundaries.
\newblock {\em The Journal of Chemical Physics}, 160(2), 2024.

\bibitem{nobile2026dynamicallowrankensemblekalman}
Fabio Nobile, Sébastien Riffaud, and Thomas~Trigo Trindade.
\newblock Dynamical low-rank ensemble kalman filter for state/parameter estimation.
\newblock {\em ArXiv}, \url{https://arxiv.org/abs/2602.06614}, 2026.

\bibitem{nobile2025dynamicallowrankapproximationskalman}
Fabio Nobile and Thomas~Trigo Trindade.
\newblock Dynamical low-rank approximations for kalman filtering.
\newblock {\em ArXiv}, \url{https://arxiv.org/abs/2509.11210}, 2025.

\bibitem{pavliotis2014stochastic}
Grigorios~A Pavliotis.
\newblock Stochastic processes and applications.
\newblock {\em Texts in applied mathematics}, 60:41--43, 2014.

\bibitem{sapsis2009dynamically}
Themistoklis~P Sapsis and Pierre~FJ Lermusiaux.
\newblock Dynamically orthogonal field equations for continuous stochastic dynamical systems.
\newblock {\em Physica D: Nonlinear Phenomena}, 238(23-24):2347--2360, 2009.

\bibitem{ueckermann2013numerical}
Mattheus~P Ueckermann, Pierre~FJ Lermusiaux, and Themistoklis~P Sapsis.
\newblock Numerical schemes for dynamically orthogonal equations of stochastic fluid and ocean flows.
\newblock {\em Journal of Computational Physics}, 233:272--294, 2013.

\end{thebibliography}

\end{document}